\documentclass{article}

\usepackage{arxiv}

\usepackage[utf8]{inputenc} 
\usepackage[T1]{fontenc}    
\usepackage{hyperref}       
\usepackage{url}            
\usepackage{booktabs}       
\usepackage{amsfonts}       
\usepackage{nicefrac}       
\usepackage{microtype}      
\usepackage{lipsum}		
\usepackage{graphicx}
\usepackage{color}
\usepackage{doi}
 \usepackage{amsmath,amsthm,amsmath,amssymb}
 \usepackage{enumerate}
 \usepackage{hyperref}
\usepackage{caption}
\usepackage{amsthm}
\usepackage{enumitem}
\theoremstyle{plain}
\newtheorem{theorem}{Theorem} [section]
\newtheorem{lemma}{Lemma}[section]
\newtheorem{definition}{Definition}[section]
\allowdisplaybreaks
\newcommand{\myitem}{\noindent - }

\title{An Inverse Source Problem For a Time-Fractional
Mixed Wave-Diffusion-Wave Equation in a
Cylindrical Domain}

\author{ \href{https://orcid.org/0000-0003-4443-6300}{\includegraphics[scale=0.06]{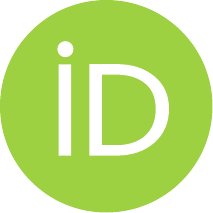}\hspace{1mm}Erkinjon Karimov}\\
	Department of Mathematics: Analysis,
Logic and Discrete Mathematics\\
	Ghent University\\
	Ghent, Belgium\\
	\texttt{erkinjon.karimov@ugent.be} \\
	\and
    \href{https://orcid.org/0009-0000-1887-795X}{\includegraphics[scale=0.06]{orcid.pdf}\hspace{1mm}Muzaffar Toshpulatov}\\
	Department of Algebra and Analysis\\
	Andijan State University\\
	Andijan, Uzbekistan\\
	\texttt{muzaffar.toshpulatov89@gmail.com} \\
}	 
\renewcommand{\shorttitle}{An inverse source problem for a time-fractional wave-diffusion-wave eqiation in ...}

\hypersetup{
pdftitle={An inverse source problem for ...},
pdfsubject={q-bio.NC, q-bio.QM},
pdfauthor={Erkinjon Karimov, Muzaffar Toshpulatov},
pdfkeywords={Inverse source problem, Wave-diffusion-wave equation, Bivariate Mittag-Leffler-type function}
}
\begin{document}
\maketitle

\begin{abstract}
This paper addresses the inverse source problem for a mixed-type fractional wave-diffusion-wave equation posed in a cylindrical domain. The governing equation involves a time-dependent variable-order fractional derivative, which enables the model to effectively capture temporal transitions between wave-like and diffusive behaviors. The solution is constructed in the form of a Fourier-Bessel series. By employing the method of separation of variables together with fundamental properties of Bessel functions, we analyze the uniform convergence of the resulting infinite series. This analysis ultimately leads to a rigorous proof of the existence of a solution.	
\end{abstract}

\keywords{Inverse source problem \and Sub-diffusion equation \and Fractional wave equation \and Prabhakar fractional derivative \and Bivariate Mittag-Leffler-type function \and Cylindrical domain}

\section{Introduction}
In this section, we briefly outline the motivation behind our study of the wave-diffusion-wave process. We first explain how gas flow phenomena can be modeled using wave and diffusion equations. Next, we provide a concise justification for adopting a model that combines wave-diffusion-wave equations, arguing that this framework offers a more accurate representation of the underlying physical processes. Finally, we describe our approach to investigating the inverse source problem associated with the resulting mixed-type equation.
\subsection{Gas flow in porous media} 
Classical diffusion models based on Fick’s law assume that pressure disturbances propagate at infinite speed, an assumption that is physically unrealistic for gas flow in tight geological formations such as shale reservoirs. Experimental observations instead reveal anomalous transport behavior, commonly referred to as anomalous diffusion. In gas flow through porous media, anomalous diffusion arises from the complex and heterogeneous structure of the medium, causing particle transport to deviate from classical Fickian dynamics and exhibit a nonlinear dependence on time. The occurrence and specific type of anomalous diffusion depend on various properties of the porous medium \cite{1}, including porosity, specific surface area, and fractal dimension parameters. Consequently, the mathematical description of such processes extends beyond classical partial differential equations. Fractional-order diffusion equations, which replace traditional integer-order time and space derivatives with fractional derivatives, provide an effective modeling framework. This approach accurately captures memory effects and complex interactions within the system \cite{2}.
\subsection{Why wave-diffusion-wave equations?} 
When gas is injected into a porous medium or a pressure pulse is generated, the resulting disturbance propagates as a wave at very short time scales, characterized by finite propagation speed. As time progresses, the disturbance gradually spreads and exhibits diffusive behavior. This transition from wave-like to diffusive dynamics is precisely captured by wave-diffusion-wave equations. 

In particular, for a fractional time order $\alpha \in (1,2)$, the governing equation continuously interpolates between the classical wave equation as $\alpha \to 2^{-}$ and the diffusion equation as $\alpha \to 1^{+}$ \cite{3}. Such transitional behavior has been experimentally observed in gas dynamics, for instance, in isothermal gas dissolution processes \cite{4}. Consequently, employing a single fractional-order equation that encompasses both regimes provides a more realistic and unified modeling framework than switching between separate wave and diffusion models.

However, in some physical scenarios, the governing dynamics may temporarily reduce to a purely diffusive process before reverting to wave-like behavior. To account for such situations, we consider a time-switched system of wave-diffusion-wave equations, which allows for successive transitions between wave and diffusion regimes over different time intervals.
\subsection{Inverse source problem and our approach} 
We investigate the inverse source problem for a fractional-order wave-diffusion-wave equation, which generalizes classical second-order hyperbolic and parabolic models. Related inverse problems for fractional evolution equations have been studied in, for example, \cite{5,6}. In the present work, the time-fractional derivative is defined in the sense of the Prabhakar-Caputo operator, a nonlocal fractional derivative characterized by a three-parameter Mittag-Leffler kernel \cite{7,8}. This operator extends the classical Caputo derivative by incorporating additional degrees of freedom, allowing for a more accurate representation of complex memory effects, multi-scale relaxation phenomena, and temporal heterogeneity inherent in anomalous transport processes.

From a physical perspective, inverse source problems of this type arise in applications such as subsurface gas flow, where it is necessary to identify the location, intensity, or temporal profile of a gas leakage or injection source based on indirect boundary or interior measurements. 

To solve the problem, we employ the method of separation of variables, which reduces the original inverse problem to a sequence of fractional differential equations with respect to time and corresponding spatial eigenvalue problems. The spatial components are associated with a Sturm-Liouville system, while the temporal components involve fractional operators with Prabhakar-type memory kernels. The resulting solution is expressed as an infinite series expansion in terms of the eigenfunctions of the spatial operator. We derive explicit analytical representations for the unknown source function and establish sufficient conditions for the existence of a solution. Furthermore, by utilizing the asymptotic properties of special functions and appropriate estimates for the Mittag-Leffler kernel, we prove the uniform convergence of the obtained series solution.

\section{Formulation of a problem}

We consider the following mixed equation

\begin{equation} \label{1}
f(r)=
\begin{cases}
{}^{PC} D_{0t}^{\alpha_{1} ,\beta _{1} ,\gamma_{1} ,\delta } u(t,r)-\Delta u(t,r), & (t,r) \in \Omega_{1}, \\
{}^{PC} D_{T_{1}t}^{\alpha_{2} ,\beta _{2} ,\gamma_{2} ,\delta } u(t,r)-\Delta u(t,r), & (t,r) \in \Omega_{2}, \\
{}^{PC} D_{T_{2}t}^{\alpha_{3} ,\beta _{3} ,\gamma_{3} ,\delta } u(t,r)-\Delta u(t,r), & (t,r) \in \Omega_{3}
\end{cases}
\end{equation}
in a cylindrical domain $\Omega=\Omega_{1}\cup \Omega_{2} \cup \Omega_{3} \cup N_{1} \cup N_{2}$. Here $N_{1}=\left\{(t,r), t=T_{1}, 0 < r < 1  \right\}, N_{2}=\left\{(t,r), t=T_{2}, 0 < r < 1  \right\},\Omega_{1}=\left\{(t,r): 0 < r < 1 , 0 < t < T_{1} \right\}, \Omega_{2}=\left\{(t,r): 0 < r < 1 , T_{1} < t < T_{2} \right\},\Omega_{3}=\left\{(t,r): 0 < r < 1 ,  T_{2}< t < T \right\},\: T,T_{1},T_{2},r,\delta,\alpha_{i}, \gamma_{i},\beta_{i}(i=1,2,3) \in \mathbb{R}$ such that $0< T_{1} < T_{2} < T, 1<\beta_{1}\le2,0<\beta_{2}\le1, 1<\beta_{3}\le2$, 
$\Delta u(t,r)$ is the Laplacian which can be written in polar coordinates as
$$ \Delta u(t,r)=u_{rr}+\frac{1}r{u_{r}}, $$
\begin{equation*}  
	{}^{PC} D_{at}^{\alpha ,\beta ,\gamma ,\delta } y(t)={}^{P} I_{at}^{\alpha ,m-\beta ,-\gamma ,\delta } \frac{d^{m} }{dt^{m} } y(t)                      
\end{equation*} 
is the Prabhakar-Caputo fractional derivative \cite{DP18}, $m-1<\beta\le m$\, $(m\in \mathbb{N})$,
\begin{equation} \label{e2.4} 
	{}^{P} I_{at}^{\alpha ,\beta ,\gamma ,\delta } y(t)=\int _{a}^{t}(t-\xi )^{\beta -1} E_{\alpha ,\beta }^{\gamma } \left[\delta (t-\xi )^{\alpha } \right]y(\xi )d\xi ,t>a  
\end{equation} 
is the Prabhakar fractional integral  and $E_{\alpha ,\beta }^{\gamma } (z)=\sum\limits_{k=0}^\infty \dfrac{(\gamma)_k z^k}{\Gamma (\alpha k+\beta)k!}$ is the Prabhakar function \cite{7}.
\begin{figure}[h]
    \centering
    \includegraphics[width=0.6\textwidth]{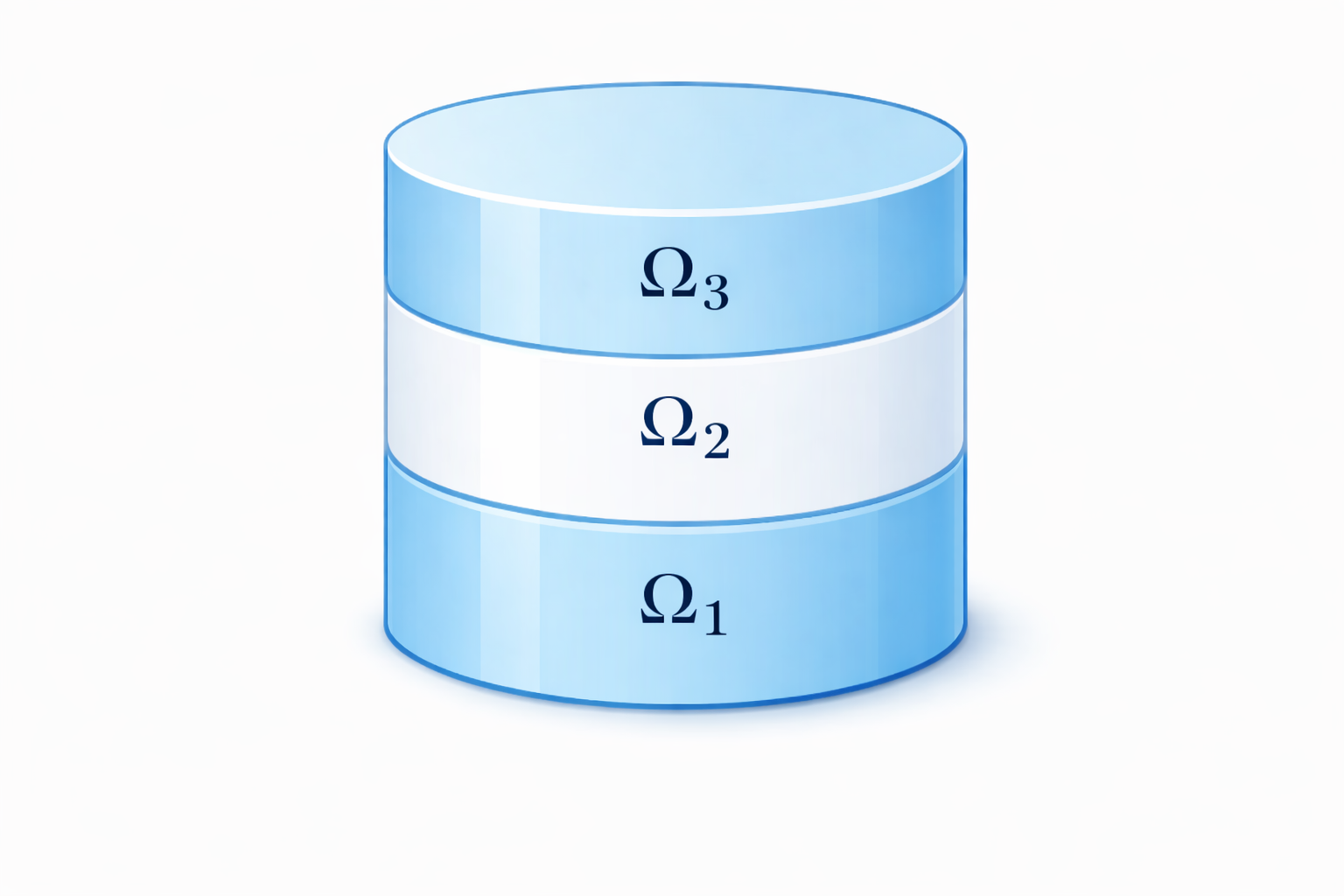}
    \captionsetup{labelformat=empty}
    \caption{Figure 1: Mixed domain $\Omega$.}
    \label{fig:1-rasm}
\end{figure}

\textbf{Direct problem.} Find a {\it regular solution} of \eqref{1} in $\Omega$ that satisfies the following

\myitem initial condition 
\begin{equation} \label{2}
    u(0,r)=\varphi (r), 0 \le r \le 1,
\end{equation}

\myitem boundary conditions
\begin{equation} \label{3}
    \Big[r\frac{\partial u(t,r)}{\partial r}\Big]_{r=0}=0 , u(t,r)|_{r=1}=0, 0 \le t \le T,
\end{equation}
 
\myitem transmitting conditions 

\begin{equation} \label{4}
    \lim_{t \to T_{1}+0} {}^{PC} D_{T_{1}t}^{\alpha_{2} ,\beta _{2} ,\gamma_{2} ,\delta } u(t,r)=\lim_{t \to T_{1}-0}u_{t}(t,r), \quad 0 \le r \le 1,
\end{equation}

\begin{equation} \label{5}
    \lim_{t \to T_{2}+0}u_{t}(t,r)=\lim_{t \to T_{2}-0} {}^{PC} D_{T_{1}t}^{\alpha_{2} ,\beta _{2} ,\gamma_{2} ,\delta } u(t,r), \quad 0 \le r \le 1.
\end{equation}
Here $f(r),\varphi(r)$ are given functions.

The similar direct problem for almost similar equation in a rectangular domain was investigated by us in \cite{KT25}. 

\begin{definition}
  Function $u(t,r)$ is called a {\bf \it regular solution} of \eqref{1} in $\Omega$, if $ u(t,r) \in C (\overline \Omega), u_{rr}(t,r) \in C ( \Omega)$ , ${}^{PC} D_{0t}^{\alpha_{1} ,\beta _{1} ,\gamma_{1} ,\delta } u(t,r)\in C(\Omega_{1}),{}^{PC} D_{T_{1}t}^{\alpha_{2} ,\beta _{2} ,\gamma_{2} ,\delta } u(t,r)\in C(\Omega_{2} \cup J_{1} \cup J_{2}), {}^{PC} D_{T_{2}t}^{\alpha_{3} ,\beta _{3} ,\gamma_{3} ,\delta } u(t,r)\in C(\Omega_{3})$ and satisfies \eqref{1} in $\Omega$.  
\end{definition}
 
If the function $f (r) $ is unknown, we can formulate the following inverse problem by imposing an additional condition on the time variable.

\textbf{Inverse source problem.} To find a pair of functions $\left\{ u(t,r),f(r) \right\}$ satisfying \eqref{1}-\eqref{5} together with the final-time measurement 
\begin{equation} \label{6}
    u(\xi, r)=\psi(r), T_{1} \le \xi \le T_{2},
\end{equation}
where $\psi(r)$ is a given function.

Using the Fourier method, we seek a solution for the homogeneous case of equation \eqref{1} in the following form:
\begin{equation} \label{7}
    u(t,r)=R(r)T(t).
\end{equation}

Substituting \eqref{7} into \eqref{1}, we get the following equalities:

$$ \frac{{}^{PC} D_{0t}^{\alpha_{1} ,\beta _{1} ,\gamma_{1} ,\delta } T(t)}{T(t)}=\frac{R'(r)}{rR(r)}+\frac{R''(r)}{R(r)}=-\mu^{2}, \qquad 0< t < T_{1},$$

$$ \frac{{}^{PC} D_{T_{1}t}^{\alpha_{2} ,\beta _{2} ,\gamma_{2} ,\delta } T(t)}{T(t)}=\frac{R'(r)}{rR(r)}+\frac{R''(r)}{R(r)}=-\mu^{2}, \qquad T_{1}< t < T_{2},$$

$$ \frac{{}^{PC} D_{T_{2}t}^{\alpha_{3} ,\beta _{3} ,\gamma_{3} ,\delta } T(t)}{T(t)}=\frac{R'(r)}{rR(r)}+\frac{R''(r)}{R(r)}=-\mu^{2}, \qquad T_{2}< t < T.$$
Here $\mu$ is a real number. Easy to see that from these equalities one can get
\begin{equation} \label{8}
    r^{2}R''(r)+rR'(r)+(\mu^{2}r^2)R(r)=0.
\end{equation}

The resulting equation is the Bessel equation \cite{9}. Solution of \eqref{8} satisfying the conditions $rR'(r)|_{r=0}=0,R(1)=0$ has the form
$$ R_{k}=J_{0}(\mu_{k}r), k=1,2,3,...,$$
where $J_{0}$ is zero-order Bessel functions of the first kind \cite{9}. 
Due to the asymptotic form of the Bessel function for the case $\nu=0$ \cite{9}, the value of $\mu_{k}$ can be written as
$$\mu_{k}=k\pi-\frac{\pi}{4}.$$

Let us note on inverse source problems for a classical combination of wave and diffusion equations called as parabolic-hyperbolic equations. In \cite{SS10}, inverse source problem was investigated  for a classical parabolic-hyperbolic equation, later on in \cite{SS15}, similar problem was studied for degenerate parabolic-hyperbolic equation with nonlocal boundary conditions.

We would like also to note several works, in which inverse source problems for a combination of fractional wave and sub-diffusion equations were investigated. In \cite{FK15} inverse source problem was studied for a system of sub-diffusion equation and wave equation with uniformly elliptic operator in space and Caputo fractional-order operator in time-variable. the uniqueness of the solution to nonlocal inverse source problem for mixed-type equation was proved in \cite{SK16} and another inverse source problem with different nonlocal conditions for such equation was targeted in \cite{NSK17}. In \cite{YK21},  the authors consider space-depending inverse source problem for a combination of time-fractional pseudo-parabolic equation with the hyperbolic equation with mixed derivatives. The time-dependent inverse source problem for a system of time-fractional wave and diffusion equations was considered in \cite{KTH24}.   We also note the work \cite{D22}, in which the inverse source problem for parabolic-hyperbolic equation with time-fractional derivative was studied in cylindrical domain.    Similar inverse source problems for parabolic-hyperbolic equations with loaded terms were investigated in \cite{Ab22} (linear) and in \cite{Ab25} (nonlinear). 

\section{Formal solution.}

We search the solution to the problem in the form of a Fourier-Bessel series as follows:
\begin{equation} \label{9}
    u(t,r)=\sum_{k=1}^{\infty}{}_{1}U_{k}(t)J_{0}(\mu_{k}r), (t,r)\in \Omega_{1},
\end{equation}
\begin{equation} \label{10}
    u(t,r)=\sum_{k=1}^{\infty}{}_{2}U_{k}(t)J_{0}(\mu_{k}r), (t,r)\in \Omega_{2},
\end{equation}
\begin{equation} \label{11}
    u(t,r)=\sum_{k=1}^{\infty}{}_{3}U_{k}(t)J_{0}(\mu_{k}r), (t,r)\in \Omega_{3}.
\end{equation}
Here ${}_{1}U_{k}(t),{}_{2}U_{k}(t),{}_{3}U_{k}(t)$ are unknown functions to be found.  

Substituting \eqref{9},\eqref{10},\eqref{11} into the equation \eqref{1}, we obtain the following fractional differential equations:
\begin{equation} \label{12}
    {}^{PC} D_{0t}^{\alpha_{1} ,\beta _{1} ,\gamma_{1} ,\delta } {}_{1}U_{k}(t)+\lambda_{k} \cdot {}_{1}U_{k}(t)=f_{k}, 0< t< T_{1},
\end{equation}
\begin{equation} \label{13}
  {}^{PC} D_{T_{1}t}^{\alpha_{2} ,\beta _{2} ,\gamma_{2} ,\delta } {}_{2}U_{k}(t)+\lambda_{k} \cdot {}_{2}U_{k}(t)=f_{k}, T_{1} \le t \le T_{2},  
\end{equation}
\begin{equation} \label{14}
    {}^{PC} D_{T_{2}t}^{\alpha_{3} ,\beta _{3} ,\gamma_{3} ,\delta } {}_{3}U_{k}(t)+\lambda_{k} \cdot {}_{3}U_{k}(t)=f_{k}, T_{2}< t< T.
\end{equation}
The value of $\lambda_{k}$, which appears in differential equations (10), (11), and (12), is equal to
$$\lambda_{k}=-\left( 4k-1 \right)^2\left( \frac{\pi}{4} \right)^{2},
$$
$f_k$ are the Fourier-Bessel coefficients of the function $f(r)$ given by
$$f_{k}=\frac{2}{J_{1}^2(\mu_{k})}\int_{0}^{1}f(r)J_{0}(\mu_{k}r)rdr.$$

For convenience in our calculations, we will continue to write this value as $ \lambda_{k}$ instead of substituting it. 

General solutions of \eqref{12}, \eqref{13} and  \eqref{14} can be written as follows \cite{11}:

\begin{equation} \label{15}
    \begin{split}
    {}_{1}U_{k} & \left(t\right)= {}_{1} A_{k} +{}_{2} A_{k} t+{}_{1} A_{k}\cdot  \lambda_{k} \cdot t^{\beta _{1} } \cdot \Gamma\left(\gamma_{1} \right)  
\cdot E_{2} \left(\left. \begin{array}{l} {\gamma_{1} ,\gamma_{1} ,1;1,0} \\ {\beta _{1} +1,\beta _{1} ,\alpha_1 ;\gamma_{1} ,\gamma_{1};1,1} \end{array}\right|\begin{array}{c} {\lambda_k t^{\beta _{1} } } \\ {\delta t^{\alpha_{1} } } \end{array}\right)+\\&+{}_{2} A_{k} \cdot \lambda_{k}   
 \cdot t^{\beta _{1} +1}  \Gamma\left(\gamma_{1} \right)\cdot E_{2} \left(\left. \begin{array}{l} {\gamma_{1} ,\gamma_{1} ,1;1,0} \\ {\beta _{1} +2,\beta _{1} ,\alpha_{1} ;\gamma_{1} ,\gamma_{1}; 1,1} \end{array}\right|\begin{array}{c} {\lambda_{k} t^{\beta _{1} } } \\ {\delta t^{\alpha_{1} } } \end{array}\right)+\Gamma\left(\gamma_{1} \right) \cdot f_{k} \cdot t^{\beta_{1}}\times\\ & \times E_{2} \left(\left. \begin{array}{l} {\gamma_{1} ,\gamma_{1} ,1;1,0} \\ {\beta _{1} +1,\beta _{1} ,\alpha_{1} ;\gamma_{1} ,\gamma_{1} ;1,1} \end{array}\right|\begin{array}{c} {\lambda_{k} t^{\beta _{1} } } \\ {\delta t^{\alpha_{1} } } \end{array}\right),
  \end{split}
\end{equation}
 
\begin{equation} \label{16}
    \begin{split}
        {}_{2}U_{k}& \left(t\right)= {}_{3} A_{k} +{}_{3} A_{k} \cdot \lambda_{k} \cdot \left(t-T_{1} \right)^{\beta _{2} } \cdot \Gamma\left(\gamma_{2} \right)
\cdot E_{2} \left(\left. \begin{array}{l} {\gamma_{2} ,\gamma_{2} ,1;1,0} \\ {\beta _{2} +1,\beta _{2} ,\alpha_{2} ;\gamma_{2} ,\gamma_{2} ;1,1} \end{array}\right|\begin{array}{c} {\lambda_{k} \left(t-T_{1} \right)^{\beta _{2} } } \\ {\delta \left(t-T_{1} \right)^{\alpha_{2} } } \end{array}\right)+\\
&+\Gamma\left(\gamma_{2} \right)\cdot f_{k}
\cdot \left(t-T_{1} \right)^{\beta _{2}} E_{2} \left(\left. \begin{array}{l} {\gamma_{2} ,\gamma_{2} ,1;1,0} \\ {\beta _{2} +1,\beta _{2} ,\alpha_{2} ;\gamma_{2} ,\gamma_{2} ;1,1} \end{array}\right|\begin{array}{c} {\lambda_{k} \left(t-T_{1} \right)^{\beta _{2} } } \\ {\delta \left(t-T_{1} \right)^{\alpha_{2} } } \end{array}\right),
    \end{split}
\end{equation}

\begin{equation} \label{17}
    \begin{split}
        {}_{3}U_{k} \left(t\right)&= {}_{4} A_{k} +{}_{5} A_{k} \left(t-T_{2} \right)+{}_{4} A_{k}  \lambda_{k}   \left(t-T_{2} \right)^{\beta _{3} }  \Gamma\left(\gamma_{3} \right) 
 E_{2} \left(\left. \begin{array}{l} {\gamma_{3} ,\gamma_{3} ,1;1,0} \\ {\beta _{3} +1,\beta _{3} ,\alpha_{3} ;\gamma_{3} ,\gamma_{3} ;1,1} \end{array}\right|\begin{array}{c} \lambda_{k}\left(t-T_{2} \right)^{\beta _{3}  } \\ {\delta \left(t-T_{2} \right)^{\alpha_{3} } } \end{array}\right)+\\
&+{}_{5} A_{k}  \lambda_{k}  \left(t-T_{2} \right)^{\beta _{3}+1 } \Gamma (\gamma_{3}) E_{2} \left(\left. \begin{array}{l} {\gamma_{3} ,\gamma_{3} ,1;1,0} \\ {\beta _{3} +2,\beta _{3} ,\alpha_{3} ;\gamma_{3} ,\gamma_{3} ;1,1} \end{array}\right|\begin{array}{c} \lambda_{k} \left(t-T_{2} \right)^{\beta _{3}  } \\ {\delta \left(t-T_{2} \right)^{\alpha_{3} } } \end{array}\right)+\Gamma (\gamma_{3})\left(t-T_{2} \right)^{\beta _{3} } \times\\& \times f_{k }\cdot E_{2} \left(\left. \begin{array}{l} {\gamma_{3} ,\gamma_{3} ,1;1,0} \\ {\beta _{3} +1,\beta _{3} ,\alpha_{3} ;\gamma_{3} ,\gamma_{3} ;1,1} \end{array}\right|\begin{array}{c} \lambda_{k}\left(t-T_{2} \right)^{\beta _{3}  } \\ {\delta \left(t-T_{2} \right)^{\alpha_{3} } } \end{array}\right).
    \end{split}
\end{equation}

Here 
\begin{align*}
    &E_{2} \left(\left. \begin{array}{l} {\gamma _{1} ,\alpha _{1} ,\beta _{1} ;\gamma _{2} ,\alpha _{2} } \\ {\delta _{1} ,\alpha _{3} ,\beta _{2} ;\delta _{2} ,\alpha _{4} ;\delta _{3} ,\beta _{3} } \end{array}\right|\begin{array}{c} {x} \\ {y} \end{array}\right)=\\
&=\sum _{m=0}^{\infty }\sum_{n=0}^{\infty}\frac{\Gamma\left(\gamma _{1} +{\alpha _{1} m+\beta _{1} n}\right) \, \Gamma\left(\gamma _{2} +{\alpha _{2} m}\right) }{\Gamma(\gamma_1)\Gamma(\gamma_2)\Gamma \left(\delta _{1} +\alpha _{3} m+\beta _{2} n\right)} \cdot \frac{x^{m} }{\Gamma \left(\delta _{2} +\alpha _{4} m\right)}  \cdot \frac{y^{n} }{\Gamma \left(\delta _{3} +\beta _{3} n\right)}
\end{align*}
is a bivariate Mittag-Leffler function \cite{14}, ${}_{1} A_{k},{}_{2} A_{k},{}_{3} A_{k},{}_{4} A_{k},{}_{5} A_{k}$ are unknown constants to be found. 

Using $\lim\limits_{t \to T_{1}-}{}_{1} U_{k}(t)=\lim\limits_{t \to T_{1}+}{}_{2} U_{k}(t) $ (which follows from regularity condition $u(t,r)\in C(\overline{\Omega})$ and \eqref{9}-\eqref{10}) and due to \eqref{15}, \eqref{16} we will obtain
\begin{equation} \label{18}
    \begin{split}
        & {}_{1} A_{k}+{}_{2} A_{k} T_{1}+{}_{1} A_{k}\cdot  \lambda_{k} \cdot T_{1}^{\beta _{1} } \cdot \Gamma\left(\gamma_{1} \right)  
\cdot E_{2} \left(\left. \begin{array}{l} {\gamma_{1} ,\gamma_{1} ,1;1,0} \\ {\beta _{1} +1,\beta _{1} ,\alpha_{1} ;\gamma_{1} ,\gamma_{1} ;1,1} \end{array}\right|\begin{array}{c} {\lambda_k T_{1}^{\beta _{1} } } \\ {\delta T_{1}^{\alpha_{1} } } \end{array}\right)+\\&
+{}_{2} A_{k} \cdot \lambda_{k} \cdot  
  T_{1}^{\beta _{1} +1}  \Gamma\left(\gamma_{1} \right) E_{2} \left(\left. \begin{array}{l} {\gamma_{1} ,\gamma_{1} ,1;1,0} \\ {\beta _{1} +2,\beta _{1} ,\alpha_{1} ;\gamma_{1} ,\gamma_{1} ;1,1} \end{array}\right|\begin{array}{c} {\lambda_{k} T_{1}^{\beta _{1} } } \\ {\delta T_{1}^{\alpha_{1} } } \end{array}\right)+\Gamma\left(\gamma_{1} \right) \cdot f_{k} \cdot T_{1}^{\beta_{1}}\times\\&
  \times E_{2} \left(\left. \begin{array}{l} {\gamma_{1} ,\gamma_{1} ,1;1,0} \\ {\beta _{1} +1,\beta _{1} ,\alpha_{1} ;\gamma_{1} ,\gamma_{1} ;1,1} \end{array}\right|\begin{array}{c} {\lambda_{k} T_{1}^{\beta _{1} } } \\ {\delta T_{1}^{\alpha_{1} } } \end{array}\right)={}_{3} A_{k}.
    \end{split}
\end{equation}

Now we use the condition \eqref{2} which can be written as ${}_{1} U_{k}(0)=\varphi_{k}$, where  $\varphi_{k}=\frac{2}{J_{1}(\mu_{k})}\int_{0}^{1}r \varphi(r)J_{0}(\mu_{k}r)dr$. This will give us ${}_{1} A_{k}=\varphi_{k}$.  To use condition \eqref{4}, we need to take the derivative of ${}_{1} U_{k}(t)$ with respect to the variable $t$. After performing the calculations, we obtain the following expression for the derivative of ${}_{1} U_{k}(t)$:
\begin{equation*} 
    \begin{split}
       & {}_{1}U_{k}'   \left(t\right)= {}_{2} A_{k} +{}_{1} A_{k}\cdot  \lambda_{k} \cdot t^{\beta _{1}-1 } \cdot \Gamma\left(\gamma_{1} \right)  
\cdot E_{2} \left(\left. \begin{array}{l} {\gamma_{1} ,\gamma_{1} ,1;1,0} \\ {\beta _{1} ,\beta _{1} ,\alpha_{1} ;\gamma_{1} ,\gamma_{1} ;1,1} \end{array}\right|\begin{array}{c} {\lambda_k t^{\beta _{1} } } \\ {\delta t^{\alpha_{1} } } \end{array}\right)+\\&+{}_{2} A_{k} \cdot \lambda_{k}   
 \cdot t^{\beta _{1} }  \Gamma\left(\gamma_{1} \right)\cdot E_{2} \left(\left. \begin{array}{l} {\gamma_{1} ,\gamma_{1} ,1;1,0} \\ {\beta _{1} +1,\beta _{1} ,\alpha_{1} ;\gamma_{1} ,\gamma_{1} ;1,1} \end{array}\right|\begin{array}{c} {\lambda_{k} t^{\beta _{1} } } \\ {\delta t^{\alpha_{1} } } \end{array}\right)+\Gamma\left(\gamma_{1} \right) \cdot f_{k} \cdot t^{\beta_{1}-1}\times\\ & \times E_{2} \left(\left. \begin{array}{l} {\gamma_{1} ,\gamma_{1} ,1;1,0} \\ {\beta _{1} ,\beta _{1} ,\alpha_{1} ;\gamma_{1} ,\gamma_{1} ;1,1} \end{array}\right|\begin{array}{c} {\lambda_{k} t^{\beta _{1} } } \\ {\delta t^{\alpha_{1} } } \end{array}\right).
    \end{split}
\end{equation*}

Now we need to find $\lim\limits_{t \to T_{1}+0} {}^{PC} D_{T_{1}t}^{\alpha_{2} ,\beta _{2} ,\gamma_{2} ,\delta } {}_{2}U_{k}(t)$. For this aim we let $t \to T_{1}+$ and from the equation \eqref{16} considering ${}_{2}U_{k}(T_{1})={}_{3}A_{k}$, we will get
\begin{equation*}
    \lim\limits_{t \to T_{1}+} {}^{PC} D_{T_{1}t}^{\alpha_{2} ,\beta _{2} ,\gamma_{2} ,\delta } {}_{2}U_{k}(t)=\lim\limits_{t \to T_{1}+}[f_{k}-\lambda_{k} \cdot {}_{2}U_{k}(t)]=f_{k}-\lambda_{k} \cdot {}_{3}A_{k}.
\end{equation*}
Hence, the transmitting condition \eqref{4} gives us
\begin{equation} \label{19}
    \begin{split}
        &f_{k}-\lambda_{k} \cdot {}_{3}A_{k}={}_{2} A_{k} +{}_{1} A_{k}\cdot  \lambda_{k} \cdot T_{1}^{\beta _{1}-1 } \cdot \Gamma\left(\gamma_{1} \right)  
\cdot E_{2} \left(\left. \begin{array}{l} {\gamma_{1} ,\gamma_{1} ,1;1,0} \\ {\beta _{1} ,\beta _{1} ,\alpha_{1} ;\gamma_{1} ,\gamma_{1} ;1,1} \end{array}\right|\begin{array}{c} {\lambda_k T_{1}^{\beta _{1} } } \\ {\delta T_{1}^{\alpha_{1} } } \end{array}\right)+\\&+{}_{2} A_{k} \cdot \lambda_{k}   
 \cdot T_{1}^{\beta _{1} }  \Gamma\left(\gamma_{1} \right)\cdot E_{2} \left(\left. \begin{array}{l} {\gamma_{1} ,\gamma_{1} ,1;1,0} \\ {\beta _{1} +1,\beta _{1} ,\alpha_{1} ;\gamma_{1} ,\gamma_{1} ;1,1} \end{array}\right|\begin{array}{c} {\lambda_{k} T_{1}^{\beta _{1} } } \\ {\delta T_{1}^{\alpha_{1} } } \end{array}\right)+\Gamma\left(\gamma_{1} \right) \cdot f_{k} \cdot T_{1}^{\beta_{1}-1}\times\\ & \times E_{2} \left(\left. \begin{array}{l} {\gamma_{1} ,\gamma_{1} ,1;1,0} \\ {\beta _{1} ,\beta _{1} ,\alpha_{1} ;\gamma_{1} ,\gamma_{1} ;1,1} \end{array}\right|\begin{array}{c} {\lambda_{k} T_{1}^{\beta _{1} } } \\ {\delta T_{1}^{\alpha_{1} } } \end{array}\right).
    \end{split}
\end{equation}
By substituting equation \eqref{18} into left side of equation \eqref{19}, we obtain the following expression:
\begin{align} 
    &f_{k}-\lambda_{k}  \Bigg[ {}_{1} A_{k}+{}_{2} A_{k} T_{1}+{}_{1} A_{k}\cdot  \lambda_{k} \cdot T_{1}^{\beta _{1} } \cdot \Gamma\left(\gamma_{1} \right)  
\cdot E_{2} \left(\left. \begin{array}{l} {\gamma_{1} ,\gamma_{1} ,1;1,0} \\ {\beta _{1} +1,\beta _{1} ,\alpha_{1} ;\gamma_{1} ,\gamma_{1} ;1,1} \end{array}\right|\begin{array}{c} {\lambda_k T_{1}^{\beta _{1} } } \\ {\delta T_{1}^{\alpha_{1} } } \end{array}\right)+\nonumber\\&
+{}_{2} A_{k} \cdot \lambda_{k} \cdot  
  T_{1}^{\beta _{1} +1}  \Gamma\left(\gamma_{1} \right) E_{2} \left(\left. \begin{array}{l} {\gamma_{1} ,\gamma_{1} ,1;1,0} \\ {\beta _{1} +2,\beta _{1} ,\alpha_{1} ;\gamma_{1} ,\gamma_{1} ;1,1} \end{array}\right|\begin{array}{c} {\lambda_{k} T_{1}^{\beta _{1} } } \\ {\delta T_{1}^{\alpha_{1} } } \end{array}\right)+\Gamma\left(\gamma_{1} \right) \cdot f_{k} \cdot T_{1}^{\beta_{1}}\times\nonumber  \\&
  \times E_{2} \left(\left. \begin{array}{l} {\gamma_{1} ,\gamma_{1} ,1;1,0} \\ {\beta _{1} +1,\beta _{1} ,\alpha_{1} ;\gamma_{1} ,\gamma_{1} ;1,1} \end{array}\right|\begin{array}{c} {\lambda_{k} T_{1}^{\beta _{1} } } \\ {\delta T_{1}^{\alpha_{1} } } \end{array}\right)\Bigg]={}_{2} A_{k} +{}_{1} A_{k}\cdot  \lambda_{k} \cdot T_{1}^{\beta _{1}-1 } \cdot \Gamma\left(\gamma_{1} \right)  
\times \nonumber \\& \times E_{2} \left(\left. \begin{array}{l} {\gamma_{1} ,\gamma_{1} ,1;1,0} \\ {\beta _{1} ,\beta _{1} ,\alpha_{1} ;\gamma_{1} ,\gamma_{1} ;1,1} \end{array}\right|\begin{array}{c} {\lambda_k T_{1}^{\beta _{1} } } \\ {\delta T_{1}^{\alpha_{1} } } \end{array}\right)+{}_{2} A_{k}  \lambda_{k}   
  T_{1}^{\beta _{1} }  \Gamma\left(\gamma_{1} \right) E_{2} \left(\left. \begin{array}{l} {\gamma_{1} ,\gamma_{1} ,1;1,0} \\ {\beta _{1} +1,\beta _{1} ,\alpha_{1} ;\gamma_{1} ,\gamma_{1} ;1,1} \end{array}\right|\begin{array}{c} {\lambda_{k} T_{1}^{\beta _{1} } } \\ {\delta T_{1}^{\alpha_{1} } } \end{array}\right)+ \nonumber \\&+\Gamma\left(\gamma_{1} \right) \cdot f_{k} \cdot T_{1}^{\beta_{1}-1} \cdot E_{2} \left(\left. \begin{array}{l} {\gamma_{1} ,\gamma_{1} ,1;1,0} \\ {\beta _{1} ,\beta _{1} ,\alpha_{1} ;\gamma_{1} ,\gamma_{1} ;1,1} \end{array}\right|\begin{array}{c} {\lambda_{k} T_{1}^{\beta _{1} } } \\ {\delta T_{1}^{\alpha_{1} } } \end{array}\right) \nonumber .   
\end{align}

We divide the last equation by $ \lambda_{k} $ :
\begin{equation*}
    \begin{split}
         &{}_{2} A_{k}  \Bigg[\frac{1}{\lambda_{k}}+T_{1}+ T_{1}^{\beta _{1} } \cdot \Gamma\left(\gamma_{1} \right)  
\cdot E_{2} \left(\left. \begin{array}{l} {\gamma_{1} ,\gamma_{1} ,1;1,0} \\ {\beta _{1} +1,\beta _{1} ,\alpha_{1} ;\gamma_{1} ,\gamma_{1} ;1,1} \end{array}\right|\begin{array}{c} {\lambda_k T_{1}^{\beta _{1} } } \\ {\delta T_{1}^{\alpha_{1} } } \end{array}\right)+ T_{1}^{\beta _{1}+1 } \cdot \lambda_{k} \cdot \Gamma\left(\gamma_{1} \right) \times\\
& \times E_{2} \left(\left. \begin{array}{l} {\gamma_{1} ,\gamma_{1} ,1;1,0} \\ {\beta _{1} +2,\beta _{1} ,\alpha_{1} ;\gamma_{1} ,\gamma_{1} ;1,1} \end{array}\right|\begin{array}{c} {\lambda_k T_{1}^{\beta _{1} } } \\ {\delta T_{1}^{\alpha_{1} } } \end{array}\right) \Bigg]=\frac{f_{k}}{\lambda_{k}}-{}_{1} A_{k}-{}_{1} A_{k}\cdot \lambda_{k}\cdot T_{1}^{\beta _{1} } \cdot \Gamma\left(\gamma_{1} \right)  
\times \\ &\times E_{2} \left(\left. \begin{array}{l} {\gamma_{1} ,\gamma_{1} ,1;1,0} \\ {\beta _{1} +1,\beta _{1} ,\alpha_{1} ;\gamma_{1} ,\gamma_{1} ;1,1} \end{array}\right|\begin{array}{c} {\lambda_k T_{1}^{\beta _{1} } } \\ {\delta T_{1}^{\alpha_{1} } } \end{array}\right)-{}_{1} A_{k}T_{1}^{\beta _{1}-1 } \Gamma\left(\gamma_{1} \right) E_{2} \left(\left. \begin{array}{l} {\gamma_{1} ,\gamma_{1} ,1;1,0} \\ {\beta _{1},\beta _{1} ,\alpha_{1} ;\gamma_{1} ,\gamma_{1} ;1,1} \end{array}\right|\begin{array}{c} {\lambda_k T_{1}^{\beta _{1} } } \\ {\delta T_{1}^{\alpha_{1} } } \end{array}\right)-\\
& -\Gamma\left(\gamma_{1} \right) \cdot f_{k} \cdot T_{1}^{\beta _{1} }  E_{2} \left(\left. \begin{array}{l} {\gamma_{1} ,\gamma_{1} ,1;1,0} \\ {\beta _{1} +1,\beta _{1} ,\alpha_{1} ;\gamma_{1} ,\gamma_{1} ;1,1} \end{array}\right|\begin{array}{c} {\lambda_k T_{1}^{\beta _{1} } } \\ {\delta T_{1}^{\alpha_{1} } } \end{array}\right)- \frac{\Gamma\left(\gamma_{1} \right) f_{k}}{\lambda_{k}}T_{1}^{\beta _{1}-1 }  \times\\
& \times E_{2} \left(\left. \begin{array}{l} {\gamma_{1} ,\gamma_{1} ,1;1,0} \\ {\beta _{1},\beta _{1} ,\alpha_{1} ;\gamma_{1} ,\gamma_{1} ;1,1} \end{array}\right|\begin{array}{c} {\lambda_k T_{1}^{\beta _{1} } } \\ {\delta T_{1}^{\alpha_{1} } } \end{array}\right).
    \end{split}
\end{equation*}
If $\Delta_{k} \neq 0 $ we could find ${}_{2} A_{k}$ as follows:
\begin{align} \label{20}
         &{}_{2} A_{k}=\frac{1}{\Delta_{k}}\Bigg[\frac{f_{k}}{\lambda_{k}}-{}_{1} A_{k}-{}_{1} A_{k}\cdot \lambda_{k}\cdot T_{1}^{\beta _{1} } \cdot \Gamma\left(\gamma_{1} \right)  
 E_{2} \left(\left. \begin{array}{l} {\gamma_{1} ,\gamma_{1} ,1;1,0} \\ {\beta _{1} +1,\beta _{1} ,\alpha_{1} ;\gamma_{1} ,\gamma_{1} ;1,1} \end{array}\right|\begin{array}{c} {\lambda_k T_{1}^{\beta _{1} } } \\ {\delta T_{1}^{\alpha_{1} } } \end{array}\right)+\nonumber\\
&-{}_{1} A_{k}T_{1}^{\beta _{1}-1 } \Gamma\left(\gamma_{1} \right) E_{2} \left(\left. \begin{array}{l} {\gamma_{1} ,\gamma_{1} ,1;1,0} \\ {\beta _{1},\beta _{1} ,\alpha_{1} ;\gamma_{1} ,\gamma_{1} ;1,1} \end{array}\right|\begin{array}{c} {\lambda_k T_{1}^{\beta _{1} } } \\ {\delta T_{1}^{\alpha_{1} } } \end{array}\right) -\Gamma\left(\gamma_{1} \right)  f_{k} T_{1}^{\beta_{1}}\times \\
& \times E_{2} \left(\left. \begin{array}{l} {\gamma_{1} ,\gamma_{1} ,1;1,0} \\ {\beta _{1} +1,\beta _{1} ,\alpha_{1} ;\gamma_{1} ,\gamma_{1} ;1,1} \end{array}\right|\begin{array}{c} {\lambda_k T_{1}^{\beta _{1} } } \\ {\delta T_{1}^{\alpha_{1} } } \end{array}\right)- \frac{\Gamma\left(\gamma_{1} \right) f_{k}T_{1}^{\beta _{1}-1 } }{\lambda_{k}}  E_{2} \left(\left. \begin{array}{l} {\gamma_{1} ,\gamma_{1} ,1;1,0} \\ {\beta _{1},\beta _{1} ,\alpha_{1} ;\gamma_{1} ,\gamma_{1} ;1,1} \end{array}\right|\begin{array}{c} {\lambda_k T_{1}^{\beta _{1} } } \\ {\delta T_{1}^{\alpha_{1} } } \end{array}\right)\Bigg]. \nonumber
\end{align}

Here 
\begin{equation} \label{21}
    \begin{split}
        \Delta_{k}=&\frac{1}{\lambda_{k}}+T_{1}+ T_{1}^{\beta _{1} } \cdot \Gamma\left(\gamma_{1} \right)  
\cdot E_{2} \left(\left. \begin{array}{l} {\gamma_{1} ,\gamma_{1} ,1;1,0} \\ {\beta _{1} +1,\beta _{1} ,\alpha_{1} ;\gamma_{1} ,\gamma_{1} ;1,1} \end{array}\right|\begin{array}{c} {\lambda_k T_{1}^{\beta _{1} } } \\ {\delta T_{1}^{\alpha_{1} } } \end{array}\right)+ T_{1}^{\beta _{1}+1 } \cdot \lambda_{k} \cdot \Gamma\left(\gamma_{1} \right)\times\\
& \times E_{2} \left(\left. \begin{array}{l} {\gamma_{1} ,\gamma_{1} ,1;1,0} \\ {\beta _{1} +2,\beta _{1} ,\alpha_{1} ;\gamma_{1} ,\gamma_{1} ;1,1} \end{array}\right|\begin{array}{c} {\lambda_k T_{1}^{\beta _{1} } } \\ {\delta T_{1}^{\alpha_{1} } } \end{array}\right) \Bigg].
    \end{split}
\end{equation}

\begin{lemma} \cite{12}
If $\beta_{1}>\gamma_{1}, \alpha_{1}>0$, then the equality $\lim\limits_{k \to \infty} \Delta_{k}=T_{1}>0$ holds. 
\end{lemma}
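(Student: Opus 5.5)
The plan is to take the four summands of $\Delta_k$ in \eqref{21} one at a time and show that every term except $T_1$ tends to zero as $k\to\infty$. Recall that $\lambda_k=-(4k-1)^2(\pi/4)^2\to-\infty$, so $1/\lambda_k\to0$ immediately. The substance of the proof lies in the two bivariate Mittag-Leffler terms. Their first argument $x_k=\lambda_k T_1^{\beta_1}$ tends to $-\infty$, while their second argument $y=\delta T_1^{\alpha_1}$ is fixed.

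\textbf{Step 1: reduce to single-variable Mittag-Leffler functions.} Using the series definition of $E_2$ given above, I would sum over $m$ for each fixed $n$. This writes
\[
\Gamma(\gamma_1)E_2\left(\left.\begin{array}{l}{\gamma_1,\gamma_1,1;1,0}\\{\beta_1+j,\beta_1,\alpha_1;\gamma_1,\gamma_1;1,1}\end{array}\right|\begin{array}{c}{x}\\{y}\end{array}\right),\qquad j=1,2,
\]
as a series in $y^n$. Its coefficients are Prabhakar/Mittag-Leffler-type functions of $x$ with first parameter $\beta_1$ and second parameter $\beta_1+j+\alpha_1 n$. Because $\alpha_1>0$, these second parameters grow linearly in $n$, so the reciprocal Gamma factors decay. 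Together with the fact that the $y$-series has the structure of an entire function, this lets me bound the tail in $n$ uniformly for $x\le0$. Any estimate on the $x$-behaviour can then be applied coefficientwise and summed.

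\textbf{Step 2: asymptotics in $x$.} For each coefficient I would use the standard algebraic expansion of Mittag-Leffler (Prabhakar) functions on the negative real axis, as in \cite{12}:
\[
E_{\beta_1,b}^{\rho}(x)=\sum_{\ell=1}^{L}c_\ell\,\frac{(-x)^{-\rho-\ell+1}}{\Gamma(b-\beta_1(\rho+\ell-1))}+O\left(|x|^{-\rho-L}\right),\qquad x\to-\infty,
\]
where $\rho$ is the Prabhakar exponent inherited from $\gamma_1$. The term $T_1^{\beta_1}\Gamma(\gamma_1)E_2(\cdots)$ with $j=1$ is $O(|x_k|^{-\rho})$, which tends to zero. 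For the last term, the prefactor $\lambda_k T_1^{\beta_1+1}=T_1x_k$ multiplies a function decaying like $|x_k|^{-\rho}$. This product tends to zero provided the leading coefficient in the expansion vanishes, or provided the effective decay exponent exceeds $1$.

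\textbf{Step 3 (main obstacle): use $\beta_1>\gamma_1$.} This is where the hypothesis $\beta_1>\gamma_1$ must enter. I would aim to show that under this condition the leading $|x|^{-1}$ contribution of the $j=2$ term has either vanishing coefficient or higher-order decay, so that $x_kE_2(\cdots)\to0$. There is a real danger here. In the degenerate classical case the combination
\[
t+\lambda t^{\beta+1}E_{\beta,\beta+2}(\lambda t^\beta)=tE_{\beta,2}(\lambda t^\beta)
\]
tends to zero, which would cancel $T_1$ rather than leave it. So I would first compute the exact leading coefficient explicitly, and check that the $\gamma_1$-dependence of the Prabhakar kernel removes this cancellation. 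The check is whether the relevant $1/\Gamma$ factor sits at a pole or the exponent $\rho$ exceeds $1$ when $\beta_1>\gamma_1$. If that verification succeeds, the remaining terms vanish and $\lim_{k\to\infty}\Delta_k=T_1>0$ follows.
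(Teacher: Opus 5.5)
Your handling of $1/\lambda_k$ and of the $\beta_1+1$ term is fine, since both tend to $0$. Step 3 cannot be completed, however. The cancellation you flagged really occurs for every admissible $\gamma_1$, so in fact $\lim_{k\to\infty}\Delta_k=0$ and the statement as written is false.

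Write $E_2^{[j]}(x,y)$ for the bivariate function in \eqref{21} whose first lower parameter is $\beta_1+j$. The quickest check is $\delta=0$, which the setting permits. Then only the $n=0$ terms survive, so $\Gamma(\gamma_1)E_2^{[j]}(x,0)=E_{\beta_1,\beta_1+j}(x)$ for any $\gamma_1>0$. With $z_k=\lambda_kT_1^{\beta_1}$ and $E_{\beta,b}(z)=1/\Gamma(b)+zE_{\beta,b+\beta}(z)$,
\[
\Delta_k=T_1E_{\beta_1,2}(z_k)+\frac{E_{\beta_1,1}(z_k)}{\lambda_k}\longrightarrow 0 ,
\]
and indeed $\Delta_k\sim T_1^{1-\beta_1}/(\Gamma(2-\beta_1)\mu_k^2)$ for $\beta_1<2$.

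For general $\delta$, shifting $m\mapsto m-1$ and using $(0)_n=0$ for $n\ge 1$ gives the exact identity
\[
1+x\,\Gamma(\gamma_1)E_2^{[2]}(x,y)=\sum_{m,n\ge 0}\frac{(\gamma_1 m)_n}{n!}\,\frac{x^m y^n}{\Gamma(\beta_1 m+\alpha_1 n+2)} .
\]
Evaluated at $(x,y)=(\lambda_kT_1^{\beta_1},\delta T_1^{\alpha_1})$ and multiplied by $T_1$, the right-hand side equals $V(T_1)$. Here $V$ has Laplace transform $s^{-2}\Phi(s)/(\Phi(s)-\lambda_k)$ with $\Phi(s)=s^{\beta_1}(1-\delta s^{-\alpha_1})^{\gamma_1}$; equivalently, $V$ solves ${}^{PC}D_{0t}^{\alpha_1,\beta_1,\gamma_1,\delta}V=\lambda_kV$ with $V(0)=0$, $V'(0)=1$. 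This value tends to $0$ as $\lambda_k\to-\infty$. Hence the last term of \eqref{21} tends to $-T_1$ and cancels the $T_1$, and the hypothesis $\beta_1>\gamma_1$ plays no role.

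Your Step 1 also misreads the structure. At fixed $n$, the $m$-dependence enters through the polynomial $(\gamma_1(m+1))_n$, not through a Pochhammer symbol $(\rho)_m$. So there is no Prabhakar exponent in $x$ that could produce decay faster than $|x|^{-1}$; the Prabhakar structure lives in the $y$-variable.

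The paper gives no proof of this lemma beyond the citation. Its proof of the analogous lemma for $\widetilde\Delta_k$, though, uses exactly the mechanism you were hoping for. In the integral representation of $E_2^{[2]}$ the inner kernel is $e^{\beta_1/2+1,\gamma_1}_{\beta_1,-\gamma_1}$. Its asymptotic $\lim_{|z|\to\infty} z\,e^{\mu,\delta}_{\alpha,\beta}(z)=-1/(\Gamma(\mu-\alpha)\Gamma(\delta+\beta))$ carries the factor $1/\Gamma(\gamma_1-\gamma_1)=1/\Gamma(0)=0$, and the limit is then passed under the integral sign.

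That interchange is what fails. After absorbing $\lambda_k$, the $\eta$-weight becomes $\eta^{-1}\,d\eta$, which is scale invariant. The substitution $u=|\lambda_k|T_1^{\beta_1}\zeta^{\beta_1}\eta^{\gamma_1}$ shows that the contribution concentrates at $\eta\to 0$ and tends to $-T_1$, consistent with the identity above. So finishing Step 3 along the ``pole of $\Gamma$'' route would reproduce that error rather than prove the claim.
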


Now, let's start finding the coefficients ${}_{4} A_{k}$ and ${}_{5} A_{k}$. For this aim we will use $u(t,r) \in C(\overline{\Omega})$ which gives us: 
\begin{equation} \label{22}
    \lim\limits_{t \to T_{2}-} {}_{2} U_{k} (t)=\lim\limits_{t \to T_{2}+} {}_{3} U_{k} (t).
\end{equation}

Based on relation \eqref{22} and equalities \eqref{16}, \eqref{17}, we find the coefficient ${}_{4} A_{k}$ as follows:
\begin{equation} \label{23}
    \begin{split}
        &{}_{3} A_{k} +{}_{3} A_{k} \cdot \lambda_{k} \cdot \left(T_{2}-T_{1} \right)^{\beta _{2} } \cdot \Gamma\left(\gamma_{2} \right)
\cdot E_{2} \left(\left. \begin{array}{l} {\gamma_{2} ,\gamma_{2} ,1;1,0} \\ {\beta _{2} +1,\beta _{2} ,\alpha_{2} ;\gamma_{2} ,\gamma_{2} ;1,1} \end{array}\right|\begin{array}{c} {\lambda_{k} \left(T_{2}-T_{1} \right)^{\beta _{2} } } \\ {\delta \left(T_{2}-T_{1} \right)^{\alpha_{2} } } \end{array}\right)+\\
&+\Gamma\left(\gamma_{2} \right)\cdot f_{k}
\cdot \left(T_{2}-T_{1} \right)^{\beta _{2}} E_{2} \left(\left. \begin{array}{l} {\gamma_{2} ,\gamma_{2} ,1;1,0} \\ {\beta _{2} +1,\beta _{2} ,\alpha_{2} ;\gamma_{2} ,\gamma_{2} ;1,1} \end{array}\right|\begin{array}{c} {\lambda_{k} \left(T_{2}-T_{1} \right)^{\beta _{2} } } \\ {\delta \left(T_{2}-T_{1} \right)^{\alpha_{2} } } \end{array}\right)={}_{4} A_{k}.
    \end{split}
\end{equation}
In order to use transmitting condition \eqref{5}, we need value limit of ${}^{PC} D_{T_{1}t}^{\alpha_{2} ,\beta _{2} ,\gamma_{2} ,\delta } u(t,r)$ as $t \to T_{2}-0$. For this aim, we pass to the limit as $t \to T_{2}-0$ and from the equation \eqref{13} we obtain:
\begin{equation} \label{24}
    \lim\limits_{t\to T_{2}-0}{}^{PC} D_{T_{1}t}^{\alpha_{2} ,\beta _{2} ,\gamma_{2} ,\delta } {}_{2} U_{k}(t)=\lim\limits_{t\to T_{2}-0}-\lambda_{k}\cdot {}_{2} U_{k}(t)+f_{k}=-\lambda_{k}\lim\limits_{t\to T_{2}-0} {}_{2} U_{k}(t)+f_{k}=-\lambda_{k}\cdot {}_{4}  A_{k}+f_{k}.
\end{equation}
To find ${}_{3} U'_{k}(t)$,we differentiate \eqref{17} and obtain
\begin{equation} \label{25}
    \begin{split}
        &{}_{3}U'_{k} \left(t\right)= {}_{5} A_{k} +{}_{4} A_{k}  \lambda_{k}   \left(t-T_{2} \right)^{\beta _{3}-1 }  \Gamma\left(\gamma_{3} \right)  
 E_{2} \left(\left. \begin{array}{l} {\gamma_{3} ,\gamma_{3} ,1;1,0} \\ {\beta _{3} ,\beta _{3} ,\alpha_{3} ;\gamma_{3} ,\gamma_{3} ;1,1} \end{array}\right|\begin{array}{c} \lambda_{k}\left(t-T_{2} \right)^{\beta _{3}  } \\ {\delta \left(t-T_{2} \right)^{\alpha_{3}} } \end{array}\right)+\\
&+{}_{5} A_{k}  \lambda_{k}  \left(t-T_{2} \right)^{\beta _{3} } \Gamma (\gamma_{3}) E_{2} \left(\left. \begin{array}{l} {\gamma_{3} ,\gamma_{3} ,1;1,0} \\ {\beta _{3} +1,\beta _{3} ,\alpha_{3} ;\gamma_{3} ,\gamma_{3} ;1,1} \end{array}\right|\begin{array}{c} \lambda_{k} \left(t-T_{2} \right)^{\beta _{3}  } \\ {\delta \left(t-T_{2} \right)^{\alpha_{3} } } \end{array}\right)+\Gamma (\gamma_{3})\left(t-T_{2} \right)^{\beta _{3}-1 } \times\\& \times f_{k }\cdot E_{2} \left(\left. \begin{array}{l} {\gamma_{3} ,\gamma_{3} ,1;1,0} \\ {\beta _{3} ,\beta _{3} ,\alpha_{3} ;\gamma_{3} ,\gamma_{3} ;1,1} \end{array}\right|\begin{array}{c} \lambda_{k}\left(t-T_{2} \right)^{\beta _{3}  } \\ {\delta \left(t-T_{2} \right)^{\alpha_{3} } } \end{array}\right).
    \end{split}
\end{equation}
Based on the transmitting condition \eqref{5} and equations \eqref{24},\eqref{25} we deduce that
\begin{equation} \label{26}
  {}_{5} A_{k}=  -\lambda_{k}\cdot {}_{4}  A_{k}+f_{k}.
\end{equation}

\section{Inverse source problem.} 

Using condition \eqref{6} and equality \eqref{16}, we can write down the following:
\begin{align*}
     \psi_{k}=&{}_{3} A_{k} +{}_{3} A_{k} \cdot \lambda_{k} \cdot \left(\xi-T_{1} \right)^{\beta _{2} } \cdot \Gamma\left(\gamma_{2} \right)
\cdot E_{2} \left(\left. \begin{array}{l} {\gamma_{2} ,\gamma_{2} ,1;1,0} \\ {\beta _{2} +1,\beta _{2} ,\alpha_{2} ;\gamma_{2} ,\gamma_{2} ;1,1} \end{array}\right|\begin{array}{c} {\lambda_{k} \left(\xi-T_{1} \right)^{\beta _{2} } } \\ {\delta \left(\xi-T_{1} \right)^{\alpha_{2} } } \end{array}\right)+\\
&+\Gamma\left(\gamma_{2} \right)\cdot f_{k}
\cdot \left(\xi-T_{1} \right)^{\beta _{2}} E_{2} \left(\left. \begin{array}{l} {\gamma_{2} ,\gamma_{2} ,1;1,0} \\ {\beta _{2} +1,\beta _{2} ,\alpha_{2} ;\gamma_{2} ,\gamma_{2} ;1,1} \end{array}\right|\begin{array}{c} {\lambda_{k} \left(\xi-T_{1} \right)^{\beta _{2} } } \\ {\delta \left(\xi-T_{1} \right)^{\alpha _{2}} } \end{array}\right).
\end{align*}
From this equation, we determine $f_{k}$:
 \begin{equation} \label{27}
     \begin{split}
          & f_{k}= \frac{\psi_{k}-{}_{3} A_{k}\left( 1 +\lambda_{k} \cdot \left(\xi-T_{1} \right)^{\beta _{2} } \cdot \Gamma\left(\gamma_{2} \right)
\cdot E_{2} \left(\left. \begin{array}{l} {\gamma_{2} ,\gamma_{2} ,1;1,0} \\ {\beta _{2} +1,\beta _{2} ,\alpha_{2} ;\gamma_{2} ,\gamma_{2} ;1,1} \end{array}\right|\begin{array}{c} {\lambda_{k} \left(\xi-T_{1} \right)^{\beta _{2} } } \\ {\delta \left(\xi-T_{1} \right)^{\alpha_{2} } } \end{array}\right) \right)}{\Gamma\left(\gamma_{2} \right)\left(\xi-T_{1} \right)^{\beta _{2}} E_{2} \left(\left. \begin{array}{l} {\gamma_{2} ,\gamma_{2} ,1;1,0} \\ {\beta _{2} +1,\beta _{2} ,\alpha_{2} ;\gamma_{2} ,\gamma_{2} ;1,1} \end{array}\right|\begin{array}{c} {\lambda_{k} \left(\xi-T_{1} \right)^{\beta _{2} } } \\ {\delta \left(\xi-T_{1} \right)^{\alpha_{2} } } \end{array}\right)}.
     \end{split}
 \end{equation}
We will introduce the following notation:
\begin{equation} \label{28}
    {}_{1} M_{k}=\Gamma\left(\gamma_{2} \right)\left(\xi-T_{1} \right)^{\beta _{2}} E_{2} \left(\left. \begin{array}{l} {\gamma_{2} ,\gamma_{2} ,1;1,0} \\ {\beta _{2} +1,\beta _{2} ,\alpha_{2} ;\gamma_{2} ,\gamma_{2} ;1,1} \end{array}\right|\begin{array}{c} {\lambda_{k} \left(\xi-T_{1} \right)^{\beta _{2} } } \\ {\delta \left(\xi-T_{1} \right)^{\alpha_{2} } } \end{array}\right).
\end{equation}
As a result, we can rewrite equation \eqref{27} as follows:
\begin{equation} \label{29}
    f_{k}= \frac{\psi_{k}-{}_{3} A_{k}\left( 1+\lambda_{k} \cdot {}_{1} M_{k} \right)  }{{}_{1} M_{k}}  \,\,\, \text{or} \,\,\, {}_{3} A_{k}\left( 1+\lambda_{k} \cdot {}_{1} M_{k} \right)+ f_{k} \cdot {}_{1} M_{k}=\psi_{k}.
\end{equation}
Note that here ${}_1M_k\neq 0$, if $\alpha_2=1$ and $\gamma_2=\beta_2$ (see for the proof in \cite{KKT26}).  

Based on equation \eqref{18}, we can write ${}_{3} A_{k}$ in a simpler form as follows:
\begin{equation} \label{30}
    \begin{split}
         & {}_{3} A_{k}={}_{1} A_{k}\left( 1+ \lambda_{k} \cdot T_{1}^{\beta _{1} } \cdot \Gamma\left(\gamma_{1} \right)  
\cdot E_{2} \left(\left. \begin{array}{l} {\gamma_{1} ,\gamma_{1} ,1;1,0} \\ {\beta _{1} +1,\beta _{1} ,\alpha_{1} ;\gamma_{1} ,\gamma_{1} ;1,1} \end{array}\right|\begin{array}{c} {\lambda_k T_{1}^{\beta _{1} } } \\ {\delta T_{1}^{\alpha_{1} } } \end{array}\right) \right)+\\&
+{}_{2} A_{k} \left( T_{1}+ \lambda_{k} 
  T_{1}^{\beta _{1} +1}  \Gamma\left(\gamma_{1} \right) E_{2} \left(\left. \begin{array}{l} {\gamma_{1} ,\gamma_{1} ,1;1,0} \\ {\beta _{1} +2,\beta _{1} ,\alpha_{1} ;\gamma_{1} ,\gamma_{1} ;1,1} \end{array}\right|\begin{array}{c} {\lambda_{k} T_{1}^{\beta _{1} } } \\ {\delta T_{1}^{\alpha_{1} } } \end{array}\right) \right)+\Gamma\left(\gamma_{1} \right)  f_{k} T_{1}^{\beta_{1}}\times\\&
  \times E_{2} \left(\left. \begin{array}{l} {\gamma_{1} ,\gamma_{1} ,1;1,0} \\ {\beta _{1} +1,\beta _{1} ,\alpha_{1} ;\gamma_{1} ,\gamma_{1} ;1,1} \end{array}\right|\begin{array}{c} {\lambda_{k} T_{1}^{\beta _{1} } } \\ {\delta T_{1}^{\alpha_{1} } } \end{array}\right).
    \end{split}
\end{equation}
For convenience, we will introduce the following notations:
\begin{equation} \label{31}
    {}_{2}M_{k}= T_{1}^{\beta _{1} } \cdot \Gamma\left(\gamma_{1} \right)  
\cdot E_{2} \left(\left. \begin{array}{l} {\gamma_{1} ,\gamma_{1} ,1;1,0} \\ {\beta _{1} +1,\beta _{1} ,\alpha_{1} ;\gamma_{1} ,\gamma_{1} ;1,1} \end{array}\right|\begin{array}{c} {\lambda_k T_{1}^{\beta _{1} } } \\ {\delta T_{1}^{\alpha_{1} } } \end{array}\right),
\end{equation}
\begin{equation} \label{32}
    {}_{3}M_{k}= T_{1}+T_{1}^{\beta _{1}+1 } \cdot \lambda_{k} \cdot \Gamma\left(\gamma_{1} \right)  
\cdot E_{2} \left(\left. \begin{array}{l} {\gamma_{1} ,\gamma_{1} ,1;1,0} \\ {\beta _{1} +2,\beta _{1} ,\alpha_1 ;\gamma_{1} ,\gamma_{1} ;1,1} \end{array}\right|\begin{array}{c} {\lambda_k T_{1}^{\beta _{1} } } \\ {\delta T_{1}^{\alpha_{1} } } \end{array}\right).
\end{equation}
Let us rewrite the form of equation \eqref{30}:
\begin{equation} \label{33}
    {}_{2} A_{k}\cdot  {}_{3} M_{k}-{}_{3} A_{k}+ f_{k}\cdot{}_{2} M_{k}=-{}_{1} A_{k}\left( 1+ \lambda_{k}\cdot {}_{2} M_{k} \right)
\end{equation}
We simplify \eqref{19} as follows and introduce the notation:
\begin{equation*}
\begin{aligned}
&{}_{1} A_{k}\cdot  \lambda_{k} \cdot T_{1}^{\beta _{1}-1 } \cdot \Gamma\left(\gamma_{1} \right)  
\cdot E_{2} \left(\left. \begin{array}{l} {\gamma_{1} ,\gamma_{1} ,1;1,0} \\ {\beta _{1} ,\beta _{1} ,\alpha_{1} ;\gamma_{1} ,\gamma_{1} ;1,1} \end{array}\right|\begin{array}{c} {\lambda_k T_{1}^{\beta _{1} } } \\ {\delta T_{1}^{\alpha_{1} } } \end{array}\right)+{}_{2} A_{k}\Bigg( 1+ \lambda_{k}   
 \cdot T_{1}^{\beta _{1} }  \Gamma\left(\gamma_{1} \right)\times\\
& E_{2} \left(\left. \begin{array}{l} {\gamma_{1} ,\gamma_{1} ,1;1,0} \\ {\beta _{1} +1,\beta _{1} ,\alpha_{1} ;\gamma_{1} ,\gamma_{1} ;1,1} \end{array}\right|\begin{array}{c} {\lambda_{k} T_{1}^{\beta _{1} } } \\ {\delta T_{1}^{\alpha _{1}} } \end{array}\right)\Bigg)+\lambda_{k} \cdot {}_{3} A_{k}+ f_{k} \Bigg(\Gamma\left(\gamma_{1} \right) \cdot  T_{1}^{\beta_{1}-1}\times\\
& \times E_{2} \left(\left. \begin{array}{l} {\gamma_{1} ,\gamma_{1} ,1;1,0} \\ {\beta _{1} ,\beta _{1} ,\alpha_{1} ;\gamma_{1} ,\gamma_{1} ;1,1} \end{array}\right|\begin{array}{c} {\lambda_{k} T_{1}^{\beta _{1} } } \\ {\delta T_{1}^{\alpha_{1} } } \end{array}\right)-1\Bigg)=0
,\end{aligned}
\end{equation*}

\begin{equation} \label{34}
  {}_{4} M_{k} =T_{1}^{\beta _{1}-1 } \cdot \Gamma\left(\gamma_{1} \right)  
\cdot E_{2} \left(\left. \begin{array}{l} {\gamma_{1} ,\gamma_{1} ,1;1,0} \\ {\beta _{1} ,\beta _{1} ,\alpha_{1} ;\gamma_{1} ,\gamma_{1} ;1,1} \end{array}\right|\begin{array}{c} {\lambda_k T_{1}^{\beta _{1} } } \\ {\delta T_{1}^{\alpha_{1} } } \end{array}\right).
\end{equation}
As a result, we have the following expression:
\begin{equation} \label{35}
     {}_{2}A_{k}(1+\lambda_{k}\cdot {}_{2}M_{k})+{}_{3}A_{k}\lambda_{k}+f_{k}({}_{4}M_{k}-1)=-\lambda_{k} \cdot {}_{1}A_{k}\cdot{}_{4}M_{k}
\end{equation}
Using equations \eqref{29},\eqref{33} and \eqref{35}, we form a system of equations with respect to unknown coefficients and then, based on condition \eqref{2}, we substitute $\varphi_{k}$ instead of ${}_{1}A_{k}$.

\begin{equation} \label{36}
\begin{cases}
{}_{3} A_{k}\left( 1+\lambda_{k} \cdot {}_{1} M_{k} \right)+ f_{k} \cdot {}_{1} M_{k}=\psi_{k}, \\
{}_{2} A_{k}\cdot  {}_{3} M_{k}-{}_{3} A_{k}+ f_{k}\cdot{}_{2} M_{k}=-\varphi_{k}\left( 1+ \lambda_{k}\cdot {}_{2} M_{k} \right),\\
{}_{2}A_{k}(1+\lambda_{k}\cdot {}_{2}M_{k})+{}_{3}A_{k}\lambda_{k}+f_{k}({}_{4}M_{k}-1)=-\lambda_{k} \cdot \varphi_{k}\cdot{}_{4}M_{k}.
\end{cases}
\end{equation}

From the resulting system of equations \eqref{36}, we determine the unknowns ${}_{2} A_{k},{}_{3} A_{k}$ and $f_{k}$ using Cramer's formulas.

In this case, we first identify the main determinant of the system. It is known that showing that this determinant is different from zero means that the system has a unique solution.
\begin{equation} \label{37}
    \begin{split}
&        \widetilde{\Delta}_{k}=\begin{vmatrix}  0 & 1+\lambda_{k}\cdot {}_{1}M_{k} & {}_{1}M_{k}\\ {}_{3}M_{k} & -1 & {}_{2}M_{k} \\ 1+\lambda_{k}\cdot {}_{2}M_{k} & \lambda_{k} & {}_{4}M_{k}-1 \end{vmatrix}=0 \cdot(-1) \cdot \left( {}_{4}M_{k}-1 \right)+\left( 1+\lambda_{k}\cdot {}_{1}M_{k} \right) {}_{2}M_{k}\times\\ \\
&\times (1+\lambda_{k}\cdot {}_{2}M_{k})+\lambda_{k} \cdot{}_{3}M_{k}\cdot{}_{1}M_{k}+{}_{1}M_{k}\left( 1+\lambda_{k}\cdot {}_{2}M_{k} \right)-(1+\lambda_{k}\cdot {}_{1}M_{k}){}_{3}M_{k}\left( {}_{4}M_{k}-1 \right)-0 \cdot \lambda_{k} \cdot {}_{2}M_{k}=\\ \\
&={}_{1}M_{k}+{}_{2}M_{k}+{}_{3}M_{k}+2\lambda_{k}\cdot{}_{1}M_{k}\cdot{}_{2}M_{k}+\lambda_{k}\cdot{}_{2}M_{k}^{2}+\lambda_{k}^{2}\cdot {}_{1}M_{k} \cdot {}_{2}M_{k}^{2}+2\lambda_{k}\cdot {}_{1}M_{k} \cdot{}_{3}M_{k}-{}_{3}M_{k}\cdot {}_{4}M_{k}-\\ \\
&-\lambda_{k}
\cdot {}_{1}M_{k}\cdot{}_{3}M_{k}\cdot{}_{4}M_{k}.
    \end{split}
\end{equation}
The expression \eqref{37} we rewrite as follows (see A1 in Appendix section):
\begin{equation}\label{39}
    \begin{split}
        &\widetilde{\Delta}_{k}=\Gamma\left(\gamma_{2} \right)\left(\xi-T_{1} \right)^{\beta _{2}} E_{2} \left(\left. \begin{array}{l} {\gamma_{2} ,\gamma_{2} ,1;1,0} \\ {\beta _{2} +1,\beta _{2} ,\alpha_{2} ;\gamma_{2} ,\gamma_{2} ;1,1} \end{array}\right|\begin{array}{c} {\lambda_{k} \left(\xi-T_{1} \right)^{\beta _{2} } } \\ {\delta \left(\xi-T_{1} \right)^{\alpha_{2} } } \end{array}\right)+T_{1}+T_{1}^{\beta_{1}} \Gamma(\gamma_{1})\times\\
&\times E_{2} \left(\left. \begin{array}{l} {\gamma_{1} ,\gamma_{1} ,1;1,0} \\ {\beta _{1} +1,\beta _{1} ,\alpha_{1} ;\gamma_{1} ,\gamma_{1} ;1,1} \end{array}\right|\begin{array}{c} {\lambda_k T_{1}^{\beta _{1} } } \\ {\delta T_{1}^{\alpha_{1} } } \end{array}\right)+T_{1}^{\beta_{1}+1} \lambda_{k} \Gamma(\gamma_{1})E_{2} \left(\left. \begin{array}{l} {\gamma_{1} ,\gamma_{1} ,1;1,0} \\ {\beta _{1} +2,\beta _{1} ,\alpha_{1} ;\gamma_{1} ,\gamma_{1} ;1,1} \end{array}\right|\begin{array}{c} {\lambda_k T_{1}^{\beta _{1} } } \\ {\delta T_{1}^{\alpha_{1} } } \end{array}\right)+ \\
&+2 \lambda_{k}T_{1}^{\beta_{1}}\Gamma\left(\gamma_{1} \right)\Gamma\left(\gamma_{2} \right)\left(\xi-T_{1} \right)^{\beta _{2}} E_{2} \left(\left. \begin{array}{l} {\gamma_{2} ,\gamma_{2} ,1;1,0} \\ {\beta _{2} +1,\beta _{2} ,\alpha_{2} ;\gamma_{2} ,\gamma_{2} ;1,1} \end{array}\right|\begin{array}{c} {\lambda_{k} \left(\xi-T_{1} \right)^{\beta _{2} } } \\ {\delta \left(\xi-T_{1} \right)^{\alpha_{2} } } \end{array}\right)\times \\
&\times E_{2} \left(\left. \begin{array}{l} {\gamma_{1} ,\gamma_{1} ,1;1,0} \\ {\beta _{1} +1,\beta _{1} ,\alpha_{1} ;\gamma_{1} ,\gamma_{1} ;1,1} \end{array}\right|\begin{array}{c} {\lambda_k T_{1}^{\beta _{1} } } \\ {\delta T_{1}^{\alpha_{1} } } \end{array}\right)+
\Gamma^{2}\left(\gamma_{1} \right)\left( E_{2} \left(\left. \begin{array}{l} {\gamma_{1} ,\gamma_{1} ,1;1,0} \\ {\beta _{1} +1,\beta _{1} ,\alpha_{1} ;\gamma_{1} ,\gamma_{1} ;1,1} \end{array}\right|\begin{array}{c} {\lambda_k T_{1}^{\beta _{1} } } \\ {\delta T_{1}^{\alpha_{1} } } \end{array}\right) \right)^{2} \times\\
& \times \lambda_{k}T_{1}^{2\beta_{1}}+\lambda_{k}^{2} \cdot\Gamma\left(\gamma_{2} \right)\cdot T_{1}^{2\beta_{1}}\Gamma^{2}\left(\gamma_{1} \right)\left(\xi-T_{1} \right)^{\beta _{2}} E_{2} \left(\left. \begin{array}{l} {\gamma_{2} ,\gamma_{2} ,1;1,0} \\ {\beta _{2} +1,\beta _{2} ,\alpha_{2} ;\gamma_{2} ,\gamma_{2} ;1,1} \end{array}\right|\begin{array}{c} {\lambda_{k} \left(\xi-T_{1} \right)^{\beta _{2} } } \\ {\delta \left(\xi-T_{1} \right)^{\alpha_{2} } } \end{array}\right) \times \\
&\times\left( E_{2} \left(\left. \begin{array}{l} {\gamma_{1} ,\gamma_{1} ,1;1,0} \\ {\beta _{1} +1,\beta _{1} ,\alpha_{1} ;\gamma_{1} ,\gamma_{1} ;1,1} \end{array}\right|\begin{array}{c} {\lambda_k T_{1}^{\beta _{1} } } \\ {\delta T_{1}^{\alpha_{1} } } \end{array}\right) \right)^{2}+2\lambda_{k}\Gamma\left(\gamma_{2} \right)\left(\xi-T_{1} \right)^{\beta _{2}} T_{1}\times \\
& \times E_{2} \left(\left. \begin{array}{l} {\gamma_{2} ,\gamma_{2} ,1;1,0} \\ {\beta _{2} +1,\beta _{2} ,\alpha_{2} ;\gamma_{2} ,\gamma_{2} ;1,1} \end{array}\right|\begin{array}{c} {\lambda_{k} \left(\xi-T_{1} \right)^{\beta _{2} } } \\ {\delta \left(\xi-T_{1} \right)^{\alpha_{2} } } \end{array}\right)+2\lambda_{k}^{2}\Gamma\left(\gamma_{1} \right)\Gamma\left(\gamma_{2} \right)\left(\xi-T_{1} \right)^{\beta _{2}} T_{1}^{\beta_{1}+1}\times\\
& \times E_{2} \left(\left. \begin{array}{l} {\gamma_{2} ,\gamma_{2} ,1;1,0} \\ {\beta _{2} +1,\beta _{2} ,\alpha_{2} ;\gamma_{2} ,\gamma_{2} ;1,1} \end{array}\right|\begin{array}{c} {\lambda_{k} \left(\xi-T_{1} \right)^{\beta _{2} } } \\ {\delta \left(\xi-T_{1} \right)^{\alpha_{2} } } \end{array}\right)E_{2} \left(\left. \begin{array}{l} {\gamma_{1} ,\gamma_{1} ,1;1,0} \\ {\beta _{1} +2,\beta _{1} ,\alpha_{1} ;\gamma_{1} ,\gamma_{1} ;1,1} \end{array}\right|\begin{array}{c} {\lambda_k T_{1}^{\beta _{1} } } \\ {\delta T_{1}^{\alpha_{1} } } \end{array}\right)-\\
&-T_{1}^{\beta_{1}}  \Gamma(\gamma_{1})E_{2} \left(\left. \begin{array}{l} {\gamma_{1} ,\gamma_{1} ,1;1,0} \\ {\beta _{1} ,\beta _{1} ,\alpha_{1} ;\gamma_{1} ,\gamma_{1} ;1,1} \end{array}\right|\begin{array}{c} {\lambda_k T_{1}^{\beta _{1} } } \\ {\delta T_{1}^{\alpha } } \end{array}\right)-  \Gamma^{2}(\gamma_{1})E_{2} \left(\left. \begin{array}{l} {\gamma_{1} ,\gamma_{1} ,1;1,0} \\ {\beta _{1}+2 ,\beta _{1} ,\alpha_{1} ;\gamma_{1} ,\gamma_{1} ;1,1} \end{array}\right|\begin{array}{c} {\lambda_k T_{1}^{\beta _{1} } } \\ {\delta T_{1}^{\alpha_{1} } } \end{array}\right) \times \\
&\times \lambda_{k} T_{1}^{2\beta_{1}} E_{2} \left(\left. \begin{array}{l} {\gamma_{1} ,\gamma_{1} ,1;1,0} \\ {\beta _{1} ,\beta _{1} ,\alpha_{1} ;\gamma_{1} ,\gamma_{1} ;1,1} \end{array}\right|\begin{array}{c} {\lambda_k T_{1}^{\beta _{1} } } \\ {\delta T_{1}^{\alpha_{1} } } \end{array}\right)-T_{1}^{\beta_{1}} E_{2} \left(\left. \begin{array}{l} {\gamma_{2} ,\gamma_{2} ,1;1,0} \\ {\beta _{2} +1,\beta _{2} ,\alpha_{2} ;\gamma_{2} ,\gamma_{2} ;1,1} \end{array}\right|\begin{array}{c} {\lambda_{k} \left(\xi-T_{1} \right)^{\beta _{2} } } \\ {\delta \left(\xi-T_{1} \right)^{\alpha_{2} } } \end{array}\right)\times\\
&\times \Gamma(\gamma_{1})\Gamma(\gamma_{2})\lambda_{k} \left(\xi-T_{1} \right)^{\beta _{2}}E_{2} \left(\left. \begin{array}{l} {\gamma_{1} ,\gamma_{1} ,1;1,0} \\ {\beta _{1} ,\beta _{1} ,\alpha_{1} ;\gamma_{1} ,\gamma_{1} ;1,1} \end{array}\right|\begin{array}{c} {\lambda_k T_{1}^{\beta _{1} } } \\ {\delta T_{1}^{\alpha_{1} } } \end{array}\right)-\lambda^{2}_{k}\Gamma^{2}(\gamma_{1})\Gamma(\gamma_{2})T_{1}^{2\beta_{1}}\left(\xi-T_{1} \right)^{\beta _{2}}\times\\
&\times E_{2} \left(\left. \begin{array}{l} {\gamma_{1} ,\gamma_{1} ,1;1,0} \\ {\beta _{1} ,\beta _{1} ,\alpha_{1} ;\gamma_{1} ,\gamma_{1} ;1,1} \end{array}\right|\begin{array}{c} {\lambda_k T_{1}^{\beta _{1} } } \\ {\delta T_{1}^{\alpha_{1} } } \end{array}\right)E_{2} \left(\left. \begin{array}{l} {\gamma_{1} ,\gamma_{1} ,1;1,0} \\ {\beta _{1}+2 ,\beta _{1} ,\alpha_{1} ;\gamma_{1} ,\gamma_{1} ;1,1} \end{array}\right|\begin{array}{c} {\lambda_k T_{1}^{\beta _{1} } } \\ {\delta T_{1}^{\alpha_{1} } } \end{array}\right) \times\\
&\times E_{2} \left(\left. \begin{array}{l} {\gamma_{2} ,\gamma_{2} ,1;1,0} \\ {\beta _{2} +1,\beta _{2} ,\alpha_{2} ;\gamma_{2} ,\gamma_{2} ;1,1} \end{array}\right|\begin{array}{c} {\lambda_{k} \left(\xi-T_{1} \right)^{\beta _{2} } } \\ {\delta \left(\xi-T_{1} \right)^{\alpha_{2} } } \end{array}\right).
    \end{split}
\end{equation}
\begin{lemma}
If $\beta_{1}>\gamma_{1},\beta_{2}>\gamma_{2}, \alpha_{1}>0, \alpha_{2}>0,$ then the equality $ \lim\limits_{k \to \infty} \widetilde{\Delta}_{k}=T_{1}>0 $ holds.
\end{lemma}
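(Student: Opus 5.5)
The plan is to treat $\widetilde{\Delta}_{k}$ in \eqref{37} as a polynomial in $\lambda_k$ and the four quantities ${}_{1}M_k,\dots,{}_{4}M_k$ from \eqref{28}, \eqref{31}, \eqref{32}, \eqref{34}. I would first find the large-$k$ behaviour of each quantity, and then pass to the limit term by term. Since $\lambda_k=-(4k-1)^2\pi^2/16\to-\infty$, everything depends on how fast the bivariate Mittag-Leffler functions decay when the first argument tends to $-\infty$ and the second, $\delta t^{\alpha}$, is held fixed.

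\textbf{Step 1.} The key tool is an asymptotic estimate of the form
\[
E_{2}\left(\left. \begin{array}{l} {\gamma ,\gamma ,1;1,0} \\ {\beta +\nu,\beta ,\alpha ;\gamma ,\gamma ;1,1} \end{array}\right|\begin{array}{c} {x} \\ {y} \end{array}\right)=\frac{C_\nu(y)}{|x|}+O\!\left(\frac{1}{|x|^{2}}\right),\qquad x\to-\infty ,
\]
for $\nu=0,1,2$, with $y$ fixed. This is the same estimate used in \cite{12} to prove the preceding Lemma ($\lim\Delta_k=T_1$). It is valid under $\beta>\gamma$ and $\alpha>0$, and I would invoke it rather than reprove it. If it is stated only for $\nu=1,2$, I would derive the $\nu=0$ case by differentiating in $t$: the $\nu=0$ function arises as $\frac{d}{dt}$ of the $\nu=1$ one. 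Alternatively, I would write it as a Laplace-inversion integral and integrate by parts once.

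\textbf{Step 2.} From Step 1 I would read off the orders
\[
{}_{1}M_k=O(|\lambda_k|^{-1}),\quad {}_{2}M_k=O(|\lambda_k|^{-1}),\quad {}_{4}M_k=O(|\lambda_k|^{-1}),
\]
together with $\lambda_k\,{}_{j}M_k\to c_j$, finite. For ${}_{3}M_k$ I would use the preceding Lemma, $\lim\Delta_k=T_1$: since $\Delta_k=\frac1{\lambda_k}+{}_{3}M_k+{}_{2}M_k$, this gives ${}_{3}M_k\to T_1$. It also forces $\lambda_k T_1^{\beta_1+1}\Gamma(\gamma_1)E_2(\beta_1+2;\dots)\to0$, so the leading $1/|x|$ coefficient for $\nu=2$ vanishes and that product is in fact $O(|\lambda_k|^{-1})$. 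The factors $(\xi-T_1)^{\beta_2}$ and $T_1^{\beta_1}$ are harmless constants.

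\textbf{Step 3.} I would then run through the expanded form of \eqref{37} term by term:
\[
{}_{1}M_k+{}_{2}M_k\to0,\qquad {}_{3}M_k\to T_1,\qquad {}_{3}M_k\,{}_{4}M_k\to0.
\]
For the products carrying powers of $\lambda_k$, namely $\lambda_k\,{}_{1}M_k\,{}_{2}M_k$, $\lambda_k\,{}_{2}M_k^2$, $\lambda_k^2\,{}_{1}M_k\,{}_{2}M_k^2$ and $\lambda_k\,{}_{1}M_k\,{}_{3}M_k{}_{4}M_k$, the extra $M$-factors make each of them $O(|\lambda_k|^{-1})$, so they tend to $0$.

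\textbf{Main obstacle.} The dangerous term is $2\lambda_k\,{}_{1}M_k\,{}_{3}M_k$, which by Step 2 tends to $2c_1T_1$ and need not vanish. The same applies to $\lambda_k\,{}_{1}M_k\,{}_{3}M_k{}_{4}M_k$ if ${}_{4}M_k$ is only bounded. To handle this, I would first recheck the determinant expansion in \eqref{37} and \eqref{39}, since a cancellation between the two $\lambda_k\,{}_{1}M_k\,{}_{3}M_k$ contributions is plausible. Failing that, I would sharpen Step 1 so that the combination $\lambda_k\,{}_{1}M_k(2-{}_{4}M_k)$ is shown to tend to $0$, using the relation between the $\nu=0$ and $\nu=1$ asymptotics. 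I expect this step to be where the real work lies.
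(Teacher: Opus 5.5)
\noindent There is a genuine gap. You never show that the decisive term $2\lambda_k\,{}_1M_k\,{}_3M_k$ tends to zero, and neither of your fallbacks can close this.

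The expansion \eqref{37} is correct. One $+\lambda_k\,{}_1M_k\,{}_3M_k$ comes from the cofactor of ${}_1M_k$, and the other from $-(1+\lambda_k\,{}_1M_k)\,{}_3M_k({}_4M_k-1)$, so nothing cancels. Once ${}_3M_k\to T_1$ is granted, the term tends to $2c_1T_1$ however Step~1 is sharpened.

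The paper gets past this point only by, in effect, setting all your $c_j$ to $0$. From its integral representations and $\lim_{|z|\to\infty}z\,e^{\mu,\delta}_{\alpha,\beta}(z)=-1/(\Gamma(\mu-\alpha)\Gamma(\delta+\beta))$, where here $\delta+\beta=\gamma-\gamma=0$, it concludes that every product of $\lambda_k$ with an $E_2$ factor tends to $0$. That kills your term, forces ${}_3M_k\to T_1$, and leaves only $T_1$.

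However, your Step~1, which allows $c_1\neq0$, is the true behaviour, and with it the statement fails. Take $\delta=0$, which is not excluded; the hypotheses $\beta_i>\gamma_i$, $\alpha_i>0$ then play no role. Only the $n=0$ terms survive, so $\Gamma(\gamma)E_2$ with lower parameter $\beta+\nu$ becomes $E_{\beta,\beta+\nu}(x)$. The identities $xE_{\beta,\beta+1}(x)=E_\beta(x)-1$ and $xE_{\beta,\beta+2}(x)=E_{\beta,2}(x)-1$ then give, for $1<\beta_1<2$, $0<\beta_2\le1$, $\xi>T_1$:
\begin{align*}
&\lambda_k\,{}_1M_k=E_{\beta_2}\bigl(\lambda_k(\xi-T_1)^{\beta_2}\bigr)-1\to-1,\qquad \lambda_k\,{}_2M_k=E_{\beta_1}\bigl(\lambda_kT_1^{\beta_1}\bigr)-1\to-1,\\
&{}_3M_k=T_1E_{\beta_1,2}\bigl(\lambda_kT_1^{\beta_1}\bigr)\to0,\qquad {}_4M_k=T_1^{\beta_1-1}E_{\beta_1,\beta_1}\bigl(\lambda_kT_1^{\beta_1}\bigr)\to0 .
\end{align*}
Equivalently, $\lambda_kT_1^{\beta_1+1}\Gamma(\gamma_1)E_2(\beta_1+2;\dots)\to-T_1$, not $0$. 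Several consequences follow.
\begin{itemize}
\item $\Delta_k=\lambda_k^{-1}+{}_2M_k+{}_3M_k\to0$, so the preceding lemma you rely on in Step~2 is false here, and your deduction ${}_3M_k\to T_1$ collapses.
\item Every term of \eqref{37} tends to $0$. In fact $\mu_k^{2}\widetilde\Delta_k\to-T_1^{1-\beta_1}/\Gamma(2-\beta_1)$, so $\widetilde\Delta_k\to0\neq T_1$.
\item Your own bookkeeping already shows the inconsistency: keeping ${}_3M_k\to T_1$ together with the true $c_1=-1$ gives $\lim\widetilde\Delta_k=-T_1$.
\end{itemize}

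So the obstacle you flagged is real and cannot be removed. The paper's claim that all products $\lambda_kE_2$ vanish is contradicted by this elementary check. The lemma needs to be reformulated, for instance in terms of $\mu_k^{2}\widetilde\Delta_k$, rather than proved as stated.
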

\begin{proof}
 Based on the expression of $\widetilde{\Delta}_{k}$ given above, we can write the following 
\begin{align*}
&\lim\limits_{k \to \infty}\widetilde\Delta_{k}=\Gamma\left(\gamma_{2} \right)\left(\xi-T_{1} \right)^{\beta _{2}} \lim\limits_{k \to \infty}E_{2} \left(\left. \begin{array}{l} {\gamma_{2} ,\gamma_{2} ,1;1,0} \\ {\beta _{2} +1,\beta _{2} ,\alpha_{2} ;\gamma_{2} ,\gamma_{2} ;1,1} \end{array}\right|\begin{array}{c} {\lambda_{k} \left(\xi-T_{1} \right)^{\beta _{2} } } \\ {\delta \left(\xi-T_{1} \right)^{\alpha_{2} } } \end{array}\right)+ \displaybreak\\ 
&+T_{1}^{\beta_{1}} \Gamma(\gamma_{1})\lim\limits_{k \to \infty}E_{2} \left(\left. \begin{array}{l} {\gamma_{1} ,\gamma_{1} ,1;1,0} \\ {\beta _{1} +1,\beta _{1} ,\alpha_{1} ;\gamma_{1} ,\gamma_{1} ;1,1} \end{array}\right|\begin{array}{c} {\lambda_k T_{1}^{\beta _{1} } } \\ {\delta T_{1}^{\alpha } } \end{array}\right)+T_{1}+T_{1}  \Gamma(\gamma_{1})\times\\ &\times\lim\limits_{k \to \infty}\left[ \lambda_{k}T_{1}^{\beta_{1}} \right]E_{2} \left(\left. \begin{array}{l} {\gamma_{1} ,\gamma_{1} ,1;1,0} \\ {\beta _{1} +2,\beta _{1} ,\alpha_{1} ;\gamma_{1} ,\gamma_{1} ;1,1} \end{array}\right|\begin{array}{c} {\lambda_k T_{1}^{\beta _{1} } } \\ {\delta T_{1}^{\alpha_{1} } } \end{array}\right)+2 T_{1}^{\beta_{1}}\Gamma\left(\gamma_{1} \right)\Gamma\left(\gamma_{2} \right)\times  \\
&\times \lim\limits_{k \to \infty}\left[ \lambda_{k}\left(\xi-T_{1} \right)^{\beta _{2}}
 \right]E_{2} \left(\left. \begin{array}{l} {\gamma_{2} ,\gamma_{2} ,1;1,0} \\ {\beta _{2} +1,\beta _{2} ,\alpha_{2} ;\gamma_{2} ,\gamma_{2} ;1,1} \end{array}\right|\begin{array}{c} {\lambda_{k} \left(\xi-T_{1} \right)^{\beta _{2} } } \\ {\delta \left(\xi-T_{1} \right)^{\alpha_{2} } } \end{array}\right)\times\\ &\times\lim\limits_{k \to \infty}E_{2} \left(\left. \begin{array}{l} {\gamma_{1} ,\gamma_{1} ,1;1,0} \\ {\beta _{1} +1,\beta _{1} ,\alpha_{1} ;\gamma_{1} ,\gamma_{1} ;1,1} \end{array}\right|\begin{array}{c} {\lambda_k T_{1}^{\beta _{1} } } \\ {\delta T_{1}^{\alpha_{1} } } \end{array}\right) +\Gamma^{2}\left(\gamma_{1} \right)T_{1}^{\beta_{1}}\times\\ &\times
\lim\limits_{k \to \infty}\left[ \lambda_{k} T_{1}^{\beta_{1}}\right]E_{2} \left(\left. \begin{array}{l} {\gamma_{1} ,\gamma_{1} ,1;1,0} \\ {\beta _{1} +1,\beta _{1} ,\alpha_{1} ;\gamma_{1} ,\gamma_{1} ;1,1} \end{array}\right|\begin{array}{c} {\lambda_k T_{1}^{\beta _{1} } } \\ {\delta T_{1}^{\alpha_{1} } } \end{array}\right)\lim\limits_{k \to \infty}E_{2} \left(\left. \begin{array}{l} {\gamma_{1} ,\gamma_{1} ,1;1,0} \\ {\beta _{1} +1,\beta _{1} ,\alpha_{1} ;\gamma_{1} ,\gamma_{1} ;1,1} \end{array}\right|\begin{array}{c} {\lambda_k T_{1}^{\beta _{1} } } \\ {\delta T_{1}^{\alpha_{1} } } \end{array}\right)+\\&
+\Gamma^{2}\left(\gamma_{1} \right)\Gamma\left(\gamma_{2} \right)T_{1}^{\beta_{1}} \lim\limits_{k \to \infty}\left[ \lambda_{k}\left(\xi-T_{1} \right)^{\beta _{2}}
 \right]E_{2} \left(\left. \begin{array}{l} {\gamma_{2} ,\gamma_{2} ,1;1,0} \\ {\beta _{2} +1,\beta _{2} ,\alpha_{2} ;\gamma_{2} ,\gamma_{2} ;1,1} \end{array}\right|\begin{array}{c} {\lambda_{k} \left(\xi-T_{1} \right)^{\beta _{2} } } \\ {\delta \left(\xi-T_{1} \right)^{\alpha_{2} } } \end{array}\right)\times\\
&\times \lim\limits_{k \to \infty}\left[ \lambda_{k} T_{1}^{\beta_{1}}\right]E_{2} \left(\left. \begin{array}{l} {\gamma_{1} ,\gamma_{1} ,1;1,0} \\ {\beta _{1} +1,\beta _{1} ,\alpha_{1} ;\gamma_{1} ,\gamma_{1} ;1,1} \end{array}\right|\begin{array}{c} {\lambda_k T_{1}^{\beta _{1} } } \\ {\delta T_{1}^{\alpha_{1} } } \end{array}\right)\lim\limits_{k \to \infty}E_{2} \left(\left. \begin{array}{l} {\gamma_{1} ,\gamma_{1} ,1;1,0} \\ {\beta _{1} +1,\beta _{1} ,\alpha_{1} ;\gamma_{1} ,\gamma_{1} ;1,1} \end{array}\right|\begin{array}{c} {\lambda_k T_{1}^{\beta _{1} } } \\ {\delta T_{1}^{\alpha_{1} } } \end{array}\right)+\\&
+2 \Gamma(\gamma_{2})T_{1}\lim\limits_{k \to \infty}\left[ \lambda_{k}\left(\xi-T_{1} \right)^{\beta _{2}} \right] E_{2} \left(\left. \begin{array}{l} {\gamma_{2} ,\gamma_{2} ,1;1,0} \\ {\beta _{2} +1,\beta _{2} ,\alpha_{2} ;\gamma_{2} ,\gamma_{2} ;1,1} \end{array}\right|\begin{array}{c} {\lambda_{k} \left(\xi-T_{1} \right)^{\beta _{2} } } \\ {\delta \left(\xi-T_{1} \right)^{\alpha_{2} } } \end{array}\right)+ \\&+
2 \Gamma(\gamma_{1})\Gamma(\gamma_{2})T_{1} \lim\limits_{k \to \infty}\left[ \lambda_{k}\left(\xi-T_{1} \right)^{\beta _{2}} \right] E_{2} \left(\left. \begin{array}{l} {\gamma_{2} ,\gamma_{2} ,1;1,0} \\ {\beta _{2} +1,\beta _{2} ,\alpha_{2} ;\gamma_{2} ,\gamma_{2} ;1,1} \end{array}\right|\begin{array}{c} {\lambda_{k} \left(\xi-T_{1} \right)^{\beta _{2} } } \\ {\delta \left(\xi-T_{1} \right)^{\alpha_{2} } } \end{array}\right) \times \\&
\lim\limits_{k \to \infty }\left[\lambda_{k}T_{1}^{\beta_{1}}\right]E_{2} \left(\left. \begin{array}{l} {\gamma_{1} ,\gamma_{1} ,1;1,0} \\ {\beta _{1} +2,\beta _{1} ,\alpha_{1} ;\gamma_{1} ,\gamma_{1} ;1,1} \end{array}\right|\begin{array}{c} {\lambda_k T_{1}^{\beta _{1} } } \\ {\delta T_{1}^{\alpha_{1} } } \end{array}\right)-\Gamma\left(\gamma_{1} \right)T_{1}^{\beta_{1}} \times \\&
\times \lim\limits_{k \to \infty}E_{2} \left(\left. \begin{array}{l} {\gamma_{1} ,\gamma_{1} ,1;1,0} \\ {\beta _{1},\beta _{1} ,\alpha_{1} ;\gamma_{1} ,\gamma_{1} ;1,1} \end{array}\right|\begin{array}{c} {\lambda_k T_{1}^{\beta _{1} } } \\ {\delta T_{1}^{\alpha } } \end{array}\right)-\Gamma^{2}(\gamma_{1})T_{1}^{\beta_{1}}\lim\limits_{k \to \infty}E_{2} \left(\left. \begin{array}{l} {\gamma_{1} ,\gamma_{1} ,1;1,0} \\ {\beta _{1},\beta _{1} ,\alpha_{1};\gamma_{1} ,\gamma_{1} ;1,1} \end{array}\right|\begin{array}{c} {\lambda_k T_{1}^{\beta _{1} } } \\ {\delta T_{1}^{\alpha_{1} } } \end{array}\right)\times\\&
\times \lim\limits_{k \to \infty}\left[  \lambda_k T_{1}^{\beta _{1} }\right]E_{2} \left(\left. \begin{array}{l} {\gamma_{1} ,\gamma_{1} ,1;1,0} \\ {\beta _{1}+2,\beta _{1} ,\alpha_{1} ;\gamma_{1} ,\gamma_{1} ;1,1} \end{array}\right|\begin{array}{c} {\lambda_k T_{1}^{\beta _{1} } } \\ {\delta T_{1}^{\alpha_{1} } } \end{array}\right)-\Gamma(\gamma_{1})\Gamma(\gamma_{2})T_{1}^{\beta_{1}} \times \\
&\times \lim\limits_{k \to \infty}\left[ \lambda_{k}\left(\xi-T_{1} \right)^{\beta _{2}} \right] E_{2} \left(\left. \begin{array}{l} {\gamma_{2} ,\gamma_{2} ,1;1,0} \\ {\beta _{2} +1,\beta _{2} ,\alpha_{2} ;\gamma_{2} ,\gamma_{2} ;1,1} \end{array}\right|\begin{array}{c} {\lambda_{k} \left(\xi-T_{1} \right)^{\beta _{2} } } \\ {\delta \left(\xi-T_{1} \right)^{\alpha_{2} } } \end{array}\right)\times\\
 &\times\lim\limits_{k \to \infty}E_{2} \left(\left. \begin{array}{l} {\gamma_{1} ,\gamma_{1} ,1;1,0} \\ {\beta _{1},\beta _{1} ,\alpha_{1} ;\gamma_{1} ,\gamma_{1} ;1,1} \end{array}\right|\begin{array}{c} {\lambda_k T_{1}^{\beta _{1} } } \\ {\delta T_{1}^{\alpha_{1} } } \end{array}\right)-\lim\limits_{k \to \infty}E_{2} \left(\left. \begin{array}{l} {\gamma_{1} ,\gamma_{1} ,1;1,0} \\ {\beta _{1}+2,\beta _{1} ,\alpha_{1} ;\gamma_{1} ,\gamma_{1} ;1,1} \end{array}\right|\begin{array}{c} {\lambda_k T_{1}^{\beta _{1} } } \\ {\delta T_{1}^{\alpha_{1} } } \end{array}\right)\times \\
&\times\Gamma^{2}\left(\gamma_{1} \right)\Gamma\left(\gamma_{2} \right)T_{1}^{\beta _{1} }\lim\limits_{k \to \infty}\left[ \lambda_{k}\left(\xi-T_{1} \right)^{\beta _{2}} \right] E_{2} \left(\left. \begin{array}{l} {\gamma_{2} ,\gamma_{2} ,1;1,0} \\ {\beta _{2} +1,\beta _{2} ,\alpha_{2} ;\gamma_{2} ,\gamma_{2} ;1,1} \end{array}\right|\begin{array}{c} {\lambda_{k} \left(\xi-T_{1} \right)^{\beta _{2} } } \\ {\delta \left(\xi-T_{1} \right)^{\alpha_{2} } } \end{array}\right)\times\\ &\times
\lim\limits_{k \to \infty }\left[\lambda_{k}T_{1}^{\beta_{1}}\right]E_{2} \left(\left. \begin{array}{l} {\gamma_{1} ,\gamma_{1} ,1;1,0} \\ {\beta _{1},\beta _{1} ,\alpha_{1} ;\gamma_{1} ,\gamma_{1} ;1,1} \end{array}\right|\begin{array}{c} {\lambda_k T_{1}^{\beta _{1} } } \\ {\delta T_{1}^{\alpha_{1} } } \end{array}\right).
\end{align*}

Now, we will use the following integral representations for $E_{2}$:
\begin{align*}
&{E_2}\left( {\left. \begin{array}{l}
			\gamma_{2} ,\gamma_{2} ,1;1,0\\
			{\beta _2} + 1,{\beta _2},\alpha_{2} ;\gamma_{2} ,\gamma_{2} ;1,1
		\end{array} \right|\begin{array}{*{20}{c}} \lambda_{k}
			{{\left( \xi-T_{1} \right)^{{\beta _2}}}}\\
			{\delta {\left( \xi-T_{1} \right)^{\alpha_{2} }}}
	\end{array}} \right) =\frac{1}{{\Gamma \left( \gamma_{2}  \right)}}\int\limits_0^1 {\int\limits_0^\infty  {{\zeta ^{\frac{{{\beta _2+1}}}{2}-1}}{\eta ^{\gamma_2  - 1}}} } {(1 - \zeta )^{\frac{{{\beta _2}+1}}{2}-1}}\times\\ &\times
	{e^{ - \eta }}\,e_{{\beta_2}, - \gamma_{2} }^{\frac{{{\beta _2+1}}}{2},\gamma_{2} }\left( { \lambda_{k} {{\left( \xi-T_{1} \right)^{{\beta _2}}}}{\eta ^{\gamma_{2}} }{\xi ^{{\beta _2}}}} \right)e_{\alpha_{2} , - 1}^{\frac{{{\beta _2+1}}}{2} ,1}\left( {\delta {\left( \xi-T_{1} \right)^{\alpha_{2}} }{{(1 - \zeta )}^{\alpha_{2} }}\eta } \right)d\zeta d\eta,\\
	&{E_2}\left( {\left. \begin{array}{l}
			\gamma_{1} ,\gamma_{1} ,1;1,0\\
			{\beta _1} ,{\beta _1},\alpha_{1} ;\gamma_{1} ,\gamma_{1} ;1,1
		\end{array} \right|\begin{array}{*{20}{c}}
			\lambda_{k}{{T_{1}^{{\beta _1}}}}\\
			{\delta {T_{1}^{\alpha_{1}} }}
	\end{array}} \right) = \dfrac{1}{{\Gamma (\gamma_{1} )}}\int\limits_0^1 {\int\limits_0^\infty  {{\zeta ^{\frac{{{\beta _1} }}{2} - 1}}\times}} \\ & \times
	{\eta ^{\gamma_{1}  - 1}}{{(1 - \zeta )}^{\frac{{{\beta _1}}}{2} - 1}}{e^{ - \eta }}\,e_{{\beta _1}, - \gamma_{1} }^{\frac{{{\beta _1} }}{2},\gamma_{1} }\left( {{\lambda_{k}{{T_{1}^{{\beta _1}}}}\zeta ^{{\beta _1}}}{\eta^ {\gamma_{1}} }} \right)  \,e_{\alpha_{1} , - 1}^{\frac{{{\beta _1} }}{2},1}\left( {\delta {T_{1}^{\alpha_{2}} }{{(1 - \zeta )}^{\alpha_{2} }}\eta } \right)d\zeta d\eta,\\
&{E_2}\left( {\left. \begin{array}{l}
			\gamma_{1} ,\gamma_{1} ,1;1,0\\
			{\beta _1} + 1,{\beta _1},\alpha_{1} ;\gamma_{1} ,\gamma_{1} ;1,1
		\end{array} \right|\begin{array}{*{20}{c}}
			\lambda_{k}{T_{1}^{{\beta _1}}}\\
			{\delta {T_{1}^{\alpha_{1}} }}
	\end{array}} \right) = \dfrac{1}{{\Gamma (\gamma_{1} )}}\int\limits_0^1 {\int\limits_0^\infty  {{\zeta ^{\frac{{{\beta _1} + 1}}{2} - 1}}\times}} \\
& \times {\eta ^{\gamma_{1}  - 1}}{{(1 - \zeta )}^{\frac{{{\beta _1} + 1}}{2} - 1}}{e^{ - \eta }}\,e_{{\beta _1}, - \gamma_{1} }^{\frac{{{\beta _1} + 1}}{2},\gamma_{1} }\left( { \lambda_{k}{T_{1}^{\beta_{1}}}{\zeta ^{{\beta _1}}}{\eta ^{\gamma_{1} }}} \right)  \,e_{\alpha_{1} , - 1}^{\frac{{{\beta _1} + 1}}{2},1}\left( {\delta {T_{1}^{\alpha_{1}} }{{(1 - \zeta )}^{\alpha_{1}} }\eta } \right)d\zeta d\eta,\\
&{E_2}\left( {\left. \begin{array}{l}
			\gamma_{1} ,\gamma_{1} ,1;1,0\\
			{\beta _1} + 2,{\beta _1},\alpha_{1} ;\gamma_{1} ,\gamma_{1} ;1,1
		\end{array} \right|\begin{array}{*{20}{c}}
			{\lambda_{k}{T_{1}^{{\beta_1}}}}\\
			{\delta {T_{1}^{{\alpha }_{1}}}}
	\end{array}} \right) =\frac{1}{{\Gamma \left( \gamma_{1}  \right)}}\int\limits_0^1 {\int\limits_0^\infty  {{\zeta ^{\frac{{{\beta _1}}}{2}}}{\eta ^{\gamma_{1}  - 1}}} } {(1 - \zeta )^{\frac{{{\beta _1}}}{2}}}\times\\
	&\times {e^{ - \eta }}\,e_{{\beta _1}, - \gamma_{1} }^{\frac{{{\beta _1}}}{2} + 1,\gamma_{1} }\left( { \lambda_{k} T_{1}^{\beta_{1}}{\eta ^{\gamma_{1}} }{\zeta ^{{\beta _1}}}} \right)e_{\alpha_{1} , - 1}^{\frac{{{\beta _1}}}{2} + 1,1}\left( {\delta {T_{1}^{\alpha_{1}} }{{(1 - \zeta )}^{\alpha_{1}} }\eta } \right)d\zeta d\eta.
\end{align*}

    Then we will use the following asymptotic formulas \cite{13}:
\begin{equation*}
	\mathop {\lim }\limits_{\left| z \right| \to \infty } e_{\alpha ,\beta }^{\mu ,\delta }(z) = 0, \,\,\,\,\mathop {\lim }\limits_{\left| z \right| \to \infty } ze_{\alpha ,\beta }^{\mu ,\delta }(z) =  - \frac{1}{{\Gamma \left( {\mu  - \alpha } \right)\Gamma \left( {\delta  + \beta } \right)}}
\end{equation*}
in order to get
$$\lim\limits_{k \to \infty}\widetilde\Delta_{k}=T_{1}>0.$$
\end{proof}

We have demonstrated that the limit of $\widetilde\Delta_{k}$ as $k$ approaches infinity is non-zero and positive. Now we will determine the unknown coefficients ${}_{2}A_{k},{}_{3}A_{k}$ and the unknown function $f_{k}$. To accomplish this, we first need to find ${}_{2}\Delta_{k},{}_{3}\Delta_{k},{}_{f}\Delta_{k}$.
\begin{equation} \label{399}
    {}_{2}A_{k}=\frac{{}_{2}\Delta_{k}}{\widetilde\Delta_{k}}, {}_{3}A_{k}=\frac{{}_{3}\Delta_{k}}{\widetilde\Delta_{k}},
     f_{k}=\frac{{}_{f}\Delta_{k}}{\widetilde\Delta_{k}}.
\end{equation}
To find ${}_{2}\Delta_{k}$, we construct the following determinant
\begin{align*}
    &       {}_{2} {\Delta}_{k}=\begin{vmatrix}  \psi_{k} & 1+\lambda_{k}\cdot {}_{1}M_{k} & {}_{1}M_{k}\\ -\varphi_{k}\left( 1+\lambda_{k}\cdot {}_{2}M_{k} \right) & -1 & {}_{2}M_{k} \\ -\lambda_{k} \cdot \varphi_{k}\cdot {}_{4}M_{k} & \lambda_{k} & {}_{4}M_{k}-1 \end{vmatrix}=\psi_{k} \left( 1-{}_{4}M_{k} \right)-\lambda_{k} \cdot \varphi_{k}\cdot {}_{4}M_{k}\cdot{}_{2}M_{k} \times \\ \\
& \left( 1+\lambda_{k}\cdot {}_{1}M_{k} \right)-\varphi_{k}\left( 1+\lambda_{k}\cdot {}_{2}M_{k} \right)\lambda_{k}\cdot{}_{1}M_{k}-\Big(\lambda_{k} \cdot \varphi_{k}\cdot {}_{1}M_{k}\cdot{}_{4}M_{k}-\varphi_{k}\left( 1+\lambda_{k}\cdot {}_{2}M_{k} \right) \times \\ \\
&\times \left( 1+\lambda_{k}\cdot {}_{1}M_{k} \right)\left( {}_{4}M_{k}-1 \right)+\lambda_{k}\cdot \psi_{k}\cdot{}_{2}M_{k}\Big)=\psi_{k}-\varphi_{k}-{}_{4}M_{k}\left( \psi_{k}-\varphi_{k} \right)-\lambda_{k}\cdot \varphi_{k}\left( {}_{1}M_{k}+{}_{2}M_{k} \right)-\\ \\
& -2\lambda^{2}_{k} \cdot \varphi_{k} \cdot {}_{1}M_{k}\cdot {}_{2}M_{k}-\lambda_{k} \cdot \varphi_{k}\cdot {}_{1}M_{k}-\lambda_{k} \cdot \psi_{k}\cdot {}_{2}M_{k}.
\end{align*}

Now, substituting the designations \eqref{28}, \eqref{31}, \eqref{32},\eqref{34} into the last equality and simplifying, we find ${}_{2} {\Delta}_{k}$ as follows

\begin{equation*}
    \begin{split}
&{}_{2}\Delta_{k}=\psi_{k}-\psi_{k}T_{1}^{\beta_{1}-1} \Gamma(\gamma_{1}) E_{2} \left(\left. \begin{array}{l} {\gamma_{1} ,\gamma_{1} ,1;1,0} \\ {\beta _{1},\beta _{1} ,\alpha_{1} ;\gamma_{1} ,\gamma_{1} ;1,1} \end{array}\right|\begin{array}{c} {\lambda_k T_{1}^{\beta _{1} } } \\ {\delta T_{1}^{\alpha_{1} } } \end{array}\right)+E_{2} \left(\left. \begin{array}{l} {\gamma_{1} ,\gamma_{1} ,1;1,0} \\ {\beta _{1},\beta _{1} ,\alpha ;\gamma_{1} ,\gamma_{1} ;1,1} \end{array}\right|\begin{array}{c} {\lambda_k T_{1}^{\beta _{1} } } \\ {\delta T_{1}^{\alpha_{1} } } \end{array}\right)\times\\
&\times \varphi_{k}T_{1}^{\beta_{1}-1} \Gamma(\gamma_{1}) -\varphi_{k}-2\lambda_{k} \varphi_{k}\Gamma\left(\gamma_{2} \right)\left(\xi-T_{1} \right)^{\beta _{2}} E_{2} \left(\left. \begin{array}{l} {\gamma_{2} ,\gamma_{2} ,1;1,0} \\ {\beta _{2} +1,\beta _{2} ,\alpha_{2} ;\gamma_{2} ,\gamma_{2} ;1,1} \end{array}\right|\begin{array}{c} {\lambda_{k} \left(\xi-T_{1} \right)^{\beta _{2} } } \\ {\delta \left(\xi-T_{1} \right)^{\alpha_{2} } } \end{array}\right)-\\
&-\lambda_{k} \varphi_{k}T_{1}^{\beta_{1}} \Gamma(\gamma_{1}) E_{2} \left(\left. \begin{array}{l} {\gamma_{1} ,\gamma_{1} ,1;1,0} \\ {\beta _{1}+1,\beta _{1} ,\alpha_{1} ;\gamma_{1} ,\gamma_{1} ;1,1} \end{array}\right|\begin{array}{c} {\lambda_k T_{1}^{\beta _{1} } } \\ {\delta T_{1}^{\alpha_{1} } } \end{array}\right)-2\lambda^{2}_{k} \varphi_{k}\Gamma\left(\gamma_{1} \right)\Gamma\left(\gamma_{2} \right)\left(\xi-T_{1} \right)^{\beta _{2}}T_{1}^{\beta _{1} }\times\\
&\times E_{2} \left(\left. \begin{array}{l} {\gamma_{2} ,\gamma_{2} ,1;1,0} \\ {\beta _{2} +1,\beta _{2} ,\alpha_{2} ;\gamma_{2} ,\gamma_{2} ;1,1} \end{array}\right|\begin{array}{c} {\lambda_{k} \left(\xi-T_{1} \right)^{\beta _{2} } } \\ {\delta \left(\xi-T_{1} \right)^{\alpha_{2} } } \end{array}\right)E_{2} \left(\left. \begin{array}{l} {\gamma_{1} ,\gamma_{1} ,1;1,0} \\ {\beta _{1}+1,\beta _{1} ,\alpha_{1} ;\gamma_{1} ,\gamma_{1} ;1,1} \end{array}\right|\begin{array}{c} {\lambda_k T_{1}^{\beta _{1} } } \\ {\delta T_{1}^{\alpha_{1} } } \end{array}\right)-\\
&-\lambda_{k}\psi_{k}T_{1}^{\beta _{1} }\Gamma(\gamma_{1})E_{2} \left(\left. \begin{array}{l} {\gamma_{1} ,\gamma_{1} ,1;1,0} \\ {\beta _{1}+1,\beta _{1} ,\alpha_{1} ;\gamma_{1} ,\gamma_{1} ;1,1} \end{array}\right|\begin{array}{c} {\lambda_k T_{1}^{\beta _{1} } } \\ {\delta T_{1}^{\alpha_{1} } } \end{array}\right).   
    \end{split}
\end{equation*}
Now using formula \eqref{399},we find the coefficient ${}_{2}A_{k}$
\begin{align} \label{40}
    &{}_{2}A_{k}=\frac{{}_{2}\Delta_{k}}{\widetilde{\Delta}_{k}}=\frac{1}{\widetilde{\Delta}_{k}}\Bigg[\psi_{k}-\psi_{k}T_{1}^{\beta_{1}-1} \Gamma(\gamma_{1}) E_{2} \left(\left. \begin{array}{l} {\gamma_{1} ,\gamma_{1} ,1;1,0} \\ {\beta _{1},\beta _{1} ,\alpha_{1} ;\gamma_{1} ,\gamma_{1} ;1,1} \end{array}\right|\begin{array}{c} {\lambda_k T_{1}^{\beta _{1} } } \\ {\delta T_{1}^{\alpha_{1} } } \end{array}\right)+\\
& +\varphi_{k}T_{1}^{\beta_{1}-1} \Gamma(\gamma_{1})E_{2} \left(\left. \begin{array}{l} {\gamma_{1} ,\gamma_{1} ,1;1,0} \\ {\beta _{1},\beta _{1} ,\alpha_{1} ;\gamma_{1} ,\gamma_{1} ;1,1} \end{array}\right|\begin{array}{c} {\lambda_k T_{1}^{\beta _{1} } } \\ {\delta T_{1}^{\alpha_{1} } } \end{array}\right)-\varphi_{k}-2\lambda_{k} \varphi_{k}\Gamma\left(\gamma_{2} \right)\left(\xi-T_{1} \right)^{\beta _{2}} \times \nonumber\\
& \times E_{2} \left(\left. \begin{array}{l} {\gamma_{2} ,\gamma_{2} ,1;1,0} \\ {\beta _{2} +1,\beta _{2} ,\alpha_{2} ;\gamma_{2} ,\gamma_{2} ;1,1} \end{array}\right|\begin{array}{c} {\lambda_{k} \left(\xi-T_{1} \right)^{\beta _{2} } } \\ {\delta \left(\xi-T_{1} \right)^{\alpha_{2} } } \end{array}\right)- \Gamma(\gamma_{1}) E_{2} \left(\left. \begin{array}{l} {\gamma_{1} ,\gamma_{1} ,1;1,0} \\ {\beta _{1}+1,\beta _{1} ,\alpha_{1} ;\gamma_{1} ,\gamma_{1} ;1,1} \end{array}\right|\begin{array}{c} {\lambda_k T_{1}^{\beta _{1} } } \\ {\delta T_{1}^{\alpha_{1} } } \end{array}\right)\times\nonumber\\
&\times\lambda_{k} \varphi_{k}T_{1}^{\beta_{1}}-2\lambda^{2}_{k} \varphi_{k}\Gamma\left(\gamma_{1} \right)\Gamma\left(\gamma_{2} \right)\left(\xi-T_{1} \right)^{\beta _{2}}T_{1}^{\beta _{1} } E_{2} \left(\left. \begin{array}{l} {\gamma_{2} ,\gamma_{2} ,1;1,0} \\ {\beta _{2} +1,\beta _{2} ,\alpha_{2} ;\gamma_{2} ,\gamma_{2} ;1,1} \end{array}\right|\begin{array}{c} {\lambda_{k} \left(\xi-T_{1} \right)^{\beta _{2} } } \\ {\delta \left(\xi-T_{1} \right)^{\alpha_{2} } } \end{array}\right)\times\nonumber\\
&\times E_{2} \left(\left. \begin{array}{l} {\gamma_{1} ,\gamma_{1} ,1;1,0} \\ {\beta _{1}+1,\beta _{1} ,\alpha_{1};\gamma_{1} ,\gamma_{1} ;1,1} \end{array}\right|\begin{array}{c} {\lambda_k T_{1}^{\beta _{1} } } \\ {\delta T_{1}^{\alpha_{1} } } \end{array}\right)-\lambda_{k}\psi_{k}T_{1}^{\beta _{1} }\Gamma(\gamma_{1})E_{2} \left(\left. \begin{array}{l} {\gamma_{1} ,\gamma_{1} ,1;1,0} \\ {\beta _{1}+1,\beta _{1} ,\alpha_{1} ;\gamma_{1} ,\gamma_{1} ;1,1} \end{array}\right|\begin{array}{c} {\lambda_k T_{1}^{\beta _{1} } } \\ {\delta T_{1}^{\alpha_{1} } } \end{array}\right)\Bigg].\nonumber
\end{align}

In the same manner, we determine the coefficient ${}_{3}A_{k}$

\begin{equation*}
\begin{split}
&{}_{3} {\Delta}_{k}=\begin{vmatrix}  0 & \psi_{k}& {}_{1}M_{k}\\ {}_{3}M_{k} & -\varphi_{k}\left( 1+\lambda_{k}\cdot {}_{2}M_{k} \right) & {}_{2}M_{k} \\  1+\lambda_{k}\cdot {}_{2}M_{k} & -\lambda_{k} \cdot \varphi_{k}\cdot {}_{4}M_{k}  & {}_{4}M_{k}-1 \end{vmatrix}=0+\psi_{k} \cdot {}_{2}M_{k}\left( 1+{}_{4}M_{k} \right)-\lambda_{k} \cdot \varphi_{k}\cdot {}_{4}M_{k} \times \\ \\
& \times{}_{1}M_{k}\cdot {}_{3}M_{k}-\Big(-\varphi_{k}\left( 1+\lambda_{k}\cdot {}_{2}M_{k} \right){}_{1}M_{k}\left( 1+\lambda_{k}\cdot {}_{2}M_{k} \right)+\psi_{k} \cdot {}_{3}M_{k}\left( {}_{4}M_{k}-1 \right)+0\Big)=\psi_{k} \cdot {}_{2}M_{k}+\\
\\
&+\lambda_{k}\cdot \psi_{k}\cdot {}_{2}M^{2}_{k}-\lambda_{k}\cdot \varphi_{k}\cdot{}_{1}M_{k}\cdot {}_{3}M_{k}\cdot {}_{4}M_{k}+\varphi_{k} \cdot {}_{1}M_{k}+2\lambda_{k}\cdot \varphi_{k}\cdot{}_{1}M_{k}\cdot {}_{2}M_{k}+\varphi_{k} \cdot\lambda_{k}^{2}\cdot{}_{1}M_{k}\times\\ \\
& \times{}_{2}M_{k}^{2}-\psi_{k}\cdot {}_{3}M_{k}\cdot {}_{4}M_{k}+\psi_{k}\cdot {}_{3}M_{k}.
\end{split}
\end{equation*}
Substituting the above notations \eqref{28}, \eqref{31},\eqref{32} and \eqref{34} into the expression obtained in ${}_{3} {\Delta}_{k}$ as 
\begin{align*}
 &{}_{3}{\Delta}_{k}=\psi_{k}T_{1}^{\beta_{1}} \Gamma(\gamma_{1}) E_{2} \left(\left. \begin{array}{l} {\gamma_{1} ,\gamma_{1} ,1;1,0} \\ {\beta _{1} +1,\beta _{1} ,\alpha_{1} ;\gamma_{1} ,\gamma_{1} ;1,1} \end{array}\right|\begin{array}{c} {\lambda_k T_{1}^{\beta _{1} } } \\ {\delta T_{1}^{\alpha_{1} } } \end{array}\right)+ \lambda_{k} \psi_{k}T_{1}^{2\beta_{1}} \Gamma^{2}(\gamma_{1}) \times\\
&\times\left( E_{2} \left(\left. \begin{array}{l} {\gamma_{1} ,\gamma_{1} ,1;1,0} \\ {\beta _{1} +1,\beta _{1} ,\alpha_{1} ;\gamma_{1} ,\gamma_{1} ;1,1} \end{array}\right|\begin{array}{c} {\lambda_k T_{1}^{\beta _{1} } } \\ {\delta T_{1}^{\alpha_{1} } } \end{array}\right) \right)^{2}-\lambda_{k}\varphi_{k}\Gamma\left(\gamma_{1} \right)\Gamma\left(\gamma_{2} \right)T_{1}^{\beta_{1}}\left(\xi-T_{1} \right)^{\beta _{2}} \times \\
&\times E_{2} \left(\left. \begin{array}{l} {\gamma_{1} ,\gamma_{1} ,1;1,0} \\ {\beta _{1},\beta _{1} ,\alpha_{1} ;\gamma_{1} ,\gamma_{1} ;1,1} \end{array}\right|\begin{array}{c} {\lambda_k T_{1}^{\beta _{1} } } \\ {\delta T_{1}^{\alpha_{1}} } \end{array}\right)E_{2} \left(\left. \begin{array}{l} {\gamma_{2} ,\gamma_{2} ,1;1,0} \\ {\beta _{2} +1,\beta _{2} ,\alpha_{2} ;\gamma_{2} ,\gamma_{2} ;1,1} \end{array}\right|\begin{array}{c} {\lambda_{k} \left(\xi-T_{1} \right)^{\beta _{2} } } \\ {\delta \left(\xi-T_{1} \right)^{\alpha_{2} } } \end{array}\right)-\lambda_{k}^{2}T_{1}^{2\beta_{1}}\Gamma\left(\gamma_{2} \right)\times\\
& \times\varphi_{k} \Gamma^{2}\left(\gamma_{1} \right)\left(\xi-T_{1} \right)^{\beta _{2}}E_{2} \left(\left. \begin{array}{l} {\gamma_{1} ,\gamma_{1} ,1;1,0} \\ {\beta _{1},\beta _{1} ,\alpha_{1} ;\gamma_{1} ,\gamma_{1} ;1,1} \end{array}\right|\begin{array}{c} {\lambda_k T_{1}^{\beta _{1} } } \\ {\delta T_{1}^{\alpha } } \end{array}\right)E_{2} \left(\left. \begin{array}{l} {\gamma_{1} ,\gamma_{1} ,1;1,0} \\ {\beta _{1}+2,\beta _{1} ,\alpha_{1} ;\gamma_{1} ,\gamma_{1} ;1,1} \end{array}\right|\begin{array}{c} {\lambda_k T_{1}^{\beta _{1} } } \\ {\delta T_{1}^{\alpha_{1} } } \end{array}\right)\times\\
& \times E_{2} \left(\left. \begin{array}{l} {\gamma_{2} ,\gamma_{2} ,1;1,0} \\ {\beta _{2} +1,\beta _{2} ,\alpha_{2} ;\gamma_{2} ,\gamma_{2} ;1,1} \end{array}\right|\begin{array}{c} {\lambda_{k} \left(\xi-T_{1} \right)^{\beta _{2} } } \\ {\delta \left(\xi-T_{1} \right)^{\alpha_{2} } } \end{array}\right)+\varphi_{k}\Gamma\left(\gamma_{2} \right)\left(\xi-T_{1} \right)^{\beta _{2}}\times \\
&\times E_{2} \left(\left. \begin{array}{l} {\gamma_{2} ,\gamma_{2} ,1;1,0} \\ {\beta _{2} +1,\beta _{2} ,\alpha_{2} ;\gamma_{2} ,\gamma_{2} ;1,1} \end{array}\right|\begin{array}{c} {\lambda_{k} \left(\xi-T_{1} \right)^{\beta _{2} } } \\ {\delta \left(\xi-T_{1} \right)^{\alpha_{2} } } \end{array}\right)+2\varphi_{k}\lambda_{k}\Gamma\left(\gamma_{1} \right)\Gamma\left(\gamma_{2} \right)T_{1}^{\beta_{1}}\left(\xi-T_{1} \right)^{\beta _{2}}\times \\
& \times E_{2} \left(\left. \begin{array}{l} {\gamma_{2} ,\gamma_{2} ,1;1,0} \\ {\beta _{2} +1,\beta _{2} ,\alpha_{2} ;\gamma_{2} ,\gamma_{2} ;1,1} \end{array}\right|\begin{array}{c} {\lambda_{k} \left(\xi-T_{1} \right)^{\beta _{2} } } \\ {\delta \left(\xi-T_{1} \right)^{\alpha_{2} } } \end{array}\right)E_{2} \left(\left. \begin{array}{l} {\gamma_{1} ,\gamma_{1} ,1;1,0} \\ {\beta _{1} +1,\beta _{1} ,\alpha_{1} ;\gamma_{1} ,\gamma_{1} ;1,1} \end{array}\right|\begin{array}{c} {\lambda_k T_{1}^{\beta _{1} } } \\ {\delta T_{1}^{\alpha_{1} } } \end{array}\right)+\\
&+\varphi_{k}\lambda_{k}^{2}\Gamma^{2}\left(\gamma_{1} \right)\Gamma\left(\gamma_{2} \right)T_{1}^{2\beta_{1}}\left(\xi-T_{1} \right)^{\beta _{2}}E_{2} \left(\left. \begin{array}{l} {\gamma_{2} ,\gamma_{2} ,1;1,0} \\ {\beta _{2} +1,\beta _{2} ,\alpha_{2} ;\gamma_{2} ,\gamma_{2} ;1,1} \end{array}\right|\begin{array}{c} {\lambda_{k} \left(\xi-T_{1} \right)^{\beta _{2} } } \\ {\delta \left(\xi-T_{1} \right)^{\alpha_{2} } } \end{array}\right) \times\\
& \times\left( E_{2} \left(\left. \begin{array}{l} {\gamma_{1} ,\gamma_{1} ,1;1,0} \\ {\beta _{1} +1,\beta _{1} ,\alpha_{1} ;\gamma_{1} ,\gamma_{1} ;1,1} \end{array}\right|\begin{array}{c} {\lambda_k T_{1}^{\beta _{1} } } \\ {\delta T_{1}^{\alpha_{1} } } \end{array}\right) \right)^{2}
- \psi_{k}T_{1}^{\beta_{1}} \Gamma(\gamma_{1}) E_{2} \left(\left. \begin{array}{l} {\gamma_{1} ,\gamma_{1} ,1;1,0} \\ {\beta _{1} ,\beta _{1} ,\alpha_{1} ;\gamma_{1} ,\gamma_{1} ;1,1} \end{array}\right|\begin{array}{c} {\lambda_k T_{1}^{\beta _{1} } } \\ {\delta T_{1}^{\alpha_{1} } } \end{array}\right)-\\
& -\psi_{k}T_{1}^{\beta_{1}} \Gamma(\gamma_{1})E_{2} \left(\left. \begin{array}{l} {\gamma_{1} ,\gamma_{1} ,1;1,0} \\ {\beta _{1},\beta _{1} ,\alpha_{1} ;\gamma_{1} ,\gamma_{1} ;1,1} \end{array}\right|\begin{array}{c} {\lambda_k T_{1}^{\beta _{1} } } \\ {\delta T_{1}^{\alpha_{1} } } \end{array}\right)-\psi_{k} \Gamma^{2}(\gamma_{1})\lambda_{k}E_{2} \left(\left. \begin{array}{l} {\gamma_{1} ,\gamma_{1} ,1;1,0} \\ {\beta _{1},\beta _{1} ,\alpha_{1} ;\gamma_{1} ,\gamma_{1} ;1,1} \end{array}\right|\begin{array}{c} {\lambda_k T_{1}^{\beta _{1} } } \\ {\delta T_{1}^{\alpha_{1} } } \end{array}\right) \times \\
&\times T_{1}^{2\beta_{1}}E_{2} \left(\left. \begin{array}{l} {\gamma_{1} ,\gamma_{1} ,1;1,0} \\ {\beta _{1}+2,\beta _{1} ,\alpha_{1} ;\gamma_{1} ,\gamma_{1} ;1,1} \end{array}\right|\begin{array}{c} {\lambda_k T_{1}^{\beta _{1} } } \\ {\delta T_{1}^{\alpha_{1} } } \end{array}\right)+\psi_{k}T_{1}+\psi_{k} \lambda_{k}T_{1}^{\beta_{1}+1}\Gamma(\gamma_{1})\times\\
& \times E_{2} \left(\left. \begin{array}{l} {\gamma_{1} ,\gamma_{1} ,1;1,0} \\ {\beta _{1}+2,\beta _{1} ,\alpha_{1} ;\gamma_{1} ,\gamma_{1} ;1,1} \end{array}\right|\begin{array}{c} {\lambda_k T_{1}^{\beta _{1} } } \\ {\delta T_{1}^{\alpha_{1} } } \end{array}\right),
\end{align*}
according to formula \eqref{399}, we find ${}_{3}A_{k}$ as
\begin{equation} \label{41}
    \begin{split}
    & {}_{3}A_{k}=\frac{{}_{3}\Delta_{k}}{\widetilde{\Delta}_{k}}=\frac{1}{\widetilde{\Delta}_{k}}\Bigg[\psi_{k}T_{1}^{\beta_{1}} \Gamma(\gamma_{1}) E_{2} \left(\left. \begin{array}{l} {\gamma_{1} ,\gamma_{1} ,1;1,0} \\ {\beta _{1} +1,\beta _{1} ,\alpha_{1} ;\gamma_{1} ,\gamma_{1} ;1,1} \end{array}\right|\begin{array}{c} {\lambda_k T_{1}^{\beta _{1} } } \\ {\delta T_{1}^{\alpha_{1} } } \end{array}\right)+ \lambda_{k} \psi_{k}T_{1}^{2\beta_{1}} \Gamma^{2}(\gamma_{1}) \times\\
&\times\left( E_{2} \left(\left. \begin{array}{l} {\gamma_{1} ,\gamma_{1} ,1;1,0} \\ {\beta _{1} +1,\beta _{1} ,\alpha_{1} ;\gamma_{1} ,\gamma_{1} ;1,1} \end{array}\right|\begin{array}{c} {\lambda_k T_{1}^{\beta _{1} } } \\ {\delta T_{1}^{\alpha_{1} } } \end{array}\right) \right)^{2}-\lambda_{k}\varphi_{k}\Gamma\left(\gamma_{1} \right)\Gamma\left(\gamma_{2} \right)T_{1}^{\beta_{1}}\left(\xi-T_{1} \right)^{\beta _{2}} \times \\
&\times E_{2} \left(\left. \begin{array}{l} {\gamma_{1} ,\gamma_{1} ,1;1,0} \\ {\beta _{1},\beta _{1} ,\alpha_{1} ;\gamma_{1} ,\gamma_{1} ;1,1} \end{array}\right|\begin{array}{c} {\lambda_k T_{1}^{\beta _{1} } } \\ {\delta T_{1}^{\alpha_{1} } } \end{array}\right)E_{2} \left(\left. \begin{array}{l} {\gamma_{2} ,\gamma_{2} ,1;1,0} \\ {\beta _{2} +1,\beta _{2} ,\alpha_{2} ;\gamma_{2} ,\gamma_{2} ;1,1} \end{array}\right|\begin{array}{c} {\lambda_{k} \left(\xi-T_{1} \right)^{\beta _{2} } } \\ {\delta \left(\xi-T_{1} \right)^{\alpha_{2} } } \end{array}\right)-\lambda_{k}^{2}T_{1}^{2\beta_{1}}\Gamma\left(\gamma_{2} \right)\times\\
& \times\varphi_{k} \Gamma^{2}\left(\gamma_{1} \right)\left(\xi-T_{1} \right)^{\beta _{2}}E_{2} \left(\left. \begin{array}{l} {\gamma_{1} ,\gamma_{1} ,1;1,0} \\ {\beta _{1},\beta _{1} ,\alpha_{1} ;\gamma_{1} ,\gamma_{1} ;1,1} \end{array}\right|\begin{array}{c} {\lambda_k T_{1}^{\beta _{1} } } \\ {\delta T_{1}^{\alpha_{1} } } \end{array}\right)E_{2} \left(\left. \begin{array}{l} {\gamma_{1} ,\gamma_{1} ,1;1,0} \\ {\beta _{1}+2,\beta _{1} ,\alpha_{1} ;\gamma_{1} ,\gamma_{1} ;1,1} \end{array}\right|\begin{array}{c} {\lambda_k T_{1}^{\beta _{1} } } \\ {\delta T_{1}^{\alpha_{1} } } \end{array}\right)\times\\
& \times E_{2} \left(\left. \begin{array}{l} {\gamma_{2} ,\gamma_{2} ,1;1,0} \\ {\beta _{2} +1,\beta _{2} ,\alpha_{2} ;\gamma_{2} ,\gamma_{2} ;1,1} \end{array}\right|\begin{array}{c} {\lambda_{k} \left(\xi-T_{1} \right)^{\beta _{2} } } \\ {\delta \left(\xi-T_{1} \right)^{\alpha_{2} } } \end{array}\right)+\varphi_{k}\Gamma\left(\gamma_{2} \right)\left(\xi-T_{1} \right)^{\beta _{2}}\times \\
&\times E_{2} \left(\left. \begin{array}{l} {\gamma_{2} ,\gamma_{2} ,1;1,0} \\ {\beta _{2} +1,\beta _{2} ,\alpha_{2} ;\gamma_{2} ,\gamma_{2} ;1,1} \end{array}\right|\begin{array}{c} {\lambda_{k} \left(\xi-T_{1} \right)^{\beta _{2} } } \\ {\delta \left(\xi-T_{1} \right)^{\alpha_{2} } } \end{array}\right)+2\varphi_{k}\lambda_{k}\Gamma\left(\gamma_{1} \right)\Gamma\left(\gamma_{2} \right)T_{1}^{\beta_{1}}\left(\xi-T_{1} \right)^{\beta _{2}}\times \\
& \times E_{2} \left(\left. \begin{array}{l} {\gamma_{2} ,\gamma_{2} ,1;1,0} \\ {\beta _{2} +1,\beta _{2} ,\alpha_{2} ;\gamma_{2} ,\gamma_{2} ;1,1} \end{array}\right|\begin{array}{c} {\lambda_{k} \left(\xi-T_{1} \right)^{\beta _{2} } } \\ {\delta \left(\xi-T_{1} \right)^{\alpha_{2} } } \end{array}\right)E_{2} \left(\left. \begin{array}{l} {\gamma_{1} ,\gamma_{1} ,1;1,0} \\ {\beta _{1} +1,\beta _{1} ,\alpha_{1} ;\gamma_{1} ,\gamma_{1} ;1,1} \end{array}\right|\begin{array}{c} {\lambda_k T_{1}^{\beta _{1} } } \\ {\delta T_{1}^{\alpha_{1}} } \end{array}\right)+\\
&+\varphi_{k}\lambda_{k}^{2}\Gamma^{2}\left(\gamma_{1} \right)\Gamma\left(\gamma_{2} \right)T_{1}^{2\beta_{1}}\left(\xi-T_{1} \right)^{\beta _{2}}E_{2} \left(\left. \begin{array}{l} {\gamma_{2} ,\gamma_{2} ,1;1,0} \\ {\beta _{2} +1,\beta _{2} ,\alpha_{2} ;\gamma_{2} ,\gamma_{2} ;1,1} \end{array}\right|\begin{array}{c} {\lambda_{k} \left(\xi-T_{1} \right)^{\beta _{2} } } \\ {\delta \left(\xi-T_{1} \right)^{\alpha_{2} } } \end{array}\right) \times\\
& \times\left( E_{2} \left(\left. \begin{array}{l} {\gamma_{1} ,\gamma_{1} ,1;1,0} \\ {\beta _{1} +1,\beta _{1} ,\alpha_{1} ;\gamma_{1} ,\gamma_{1} ;1,1} \end{array}\right|\begin{array}{c} {\lambda_k T_{1}^{\beta _{1} } } \\ {\delta T_{1}^{\alpha_{1} } } \end{array}\right) \right)^{2}
- \psi_{k}T_{1}^{\beta_{1}} \Gamma(\gamma_{1}) E_{2} \left(\left. \begin{array}{l} {\gamma_{1} ,\gamma_{1} ,1;1,0} \\ {\beta _{1} ,\beta _{1} ,\alpha_{1} ;\gamma_{1} ,\gamma_{1} ;1,1} \end{array}\right|\begin{array}{c} {\lambda_k T_{1}^{\beta _{1} } } \\ {\delta T_{1}^{\alpha_{1} } } \end{array}\right)-\\
& -\psi_{k} T_{1}^{2\beta_{1}}\Gamma^{2}(\gamma_{1})\lambda_{k}E_{2} \left(\left. \begin{array}{l} {\gamma_{1} ,\gamma_{1} ,1;1,0} \\ {\beta _{1},\beta _{1} ,\alpha_{1} ;\gamma_{1} ,\gamma_{1} ;1,1} \end{array}\right|\begin{array}{c} {\lambda_k T_{1}^{\beta _{1} } } \\ {\delta T_{1}^{\alpha } } \end{array}\right) E_{2} \left(\left. \begin{array}{l} {\gamma_{1} ,\gamma_{1} ,1;1,0} \\ {\beta _{1}+2,\beta _{1} ,\alpha_{1} ;\gamma_{1} ,\gamma_{1} ;1,1} \end{array}\right|\begin{array}{c} {\lambda_k T_{1}^{\beta _{1} } } \\ {\delta T_{1}^{\alpha_{1} } } \end{array}\right)+ \\
&+\psi_{k}T_{1}+\psi_{k} \lambda_{k}T_{1}^{\beta_{1}+1}\Gamma(\gamma_{1}) E_{2} \left(\left. \begin{array}{l} {\gamma_{1} ,\gamma_{1} ,1;1,0} \\ {\beta _{1}+2,\beta _{1} ,\alpha_{1} ;\gamma_{1} ,\gamma_{1} ;1,1} \end{array}\right|\begin{array}{c} {\lambda_k T_{1}^{\beta _{1} } } \\ {\delta T_{1}^{\alpha_{1} } } \end{array}\right)\Bigg].
\end{split}
\end{equation}
Now, we will find the unknown function $f_{k}$ in a similar way:
\begin{align} \label{42}
& f_{k}=\frac{{}_{f}\Delta_{k}}{\widetilde{\Delta}_{k}}=\frac{1}{\widetilde{\Delta}_{k}}\Bigg[\psi_{k}-\varphi_{k}-2\lambda_{k} \varphi_{k}T_{1}^{\beta_{1}} \Gamma(\gamma_{1}) E_{2} \left(\left. \begin{array}{l} {\gamma_{1} ,\gamma_{1} ,1;1,0} \\ {\beta _{1} +1,\beta _{1} ,\alpha_{1} ;\gamma_{1} ,\gamma_{1} ;1,1} \end{array}\right|\begin{array}{c} {\lambda_k T_{1}^{\beta _{1} } } \\ {\delta T_{1}^{\alpha_{1} } } \end{array}\right)-\lambda_{k}^{2} \varphi_{k}  \times \nonumber\\
&\times T_{1}^{2\beta_{1}}\Gamma^{2}(\gamma_{1})\left( E_{2} \left(\left. \begin{array}{l} {\gamma_{1} ,\gamma_{1} ,1;1,0} \\ {\beta _{1} +1,\beta _{1} ,\alpha_{1} ;\gamma_{1} ,\gamma_{1} ;1,1} \end{array}\right|\begin{array}{c} {\lambda_k T_{1}^{\beta _{1} } } \\ {\delta T_{1}^{\alpha_{1} } } \end{array}\right) \right)^{2}-\lambda_{k}\varphi_{k}\Gamma\left(\gamma_{2} \right)\left(\xi-T_{1} \right)^{\beta _{2}} \times \nonumber\\
&\times E_{2} \left(\left. \begin{array}{l} {\gamma_{2} ,\gamma_{2} ,1;1,0} \\ {\beta _{2} +1,\beta _{2} ,\alpha_{2} ;\gamma_{2} ,\gamma_{2} ;1,1} \end{array}\right|\begin{array}{c} {\lambda_{k} \left(\xi-T_{1} \right)^{\beta _{2} } } \\ {\delta \left(\xi-T_{1} \right)^{\alpha_{2} } }\end{array}\right)-2\lambda_{k}^{2}\varphi_{k}T_{1}^{\beta_{1}}\Gamma\left(\gamma_{1} \right)\Gamma\left(\gamma_{2} \right)\left(\xi-T_{1} \right)^{\beta _{2}}\times\nonumber\\
&\times E_{2} \left(\left. \begin{array}{l} {\gamma_{2} ,\gamma_{2} ,1;1,0} \\ {\beta _{2} +1,\beta _{2} ,\alpha_{2} ;\gamma_{2} ,\gamma_{2} ;1,1} \end{array}\right|\begin{array}{c} {\lambda_{k} \left(\xi-T_{1} \right)^{\beta _{2} } } \\ {\delta \left(\xi-T_{1} \right)^{\alpha_{2} } } \end{array}\right)E_{2} \left(\left. \begin{array}{l} {\gamma_{1} ,\gamma_{1} ,1;1,0} \\ {\beta _{1} +1,\beta _{1} ,\alpha_{1} ;\gamma_{1} ,\gamma_{1} ;1,1} \end{array}\right|\begin{array}{c} {\lambda_k T_{1}^{\beta _{1} } } \\ {\delta T_{1}^{\alpha_{1} } } \end{array}\right)-\\
& -\lambda_{k}^{3}T_{1}^{2\beta_{1}}\varphi_{k}\Gamma^{2}\left(\gamma_{1} \right)\Gamma\left(\gamma_{2} \right)\left(\xi-T_{1} \right)^{\beta _{2}}E_{2} \left(\left. \begin{array}{l} {\gamma_{2} ,\gamma_{2} ,1;1,0} \\ {\beta _{2} +1,\beta _{2} ,\alpha_{2} ;\gamma_{2} ,\gamma_{2} ;1,1} \end{array}\right|\begin{array}{c} {\lambda_{k} \left(\xi-T_{1} \right)^{\beta _{2} } } \\ {\delta \left(\xi-T_{1} \right)^{\alpha_{2} } } \end{array}\right)\times\nonumber\\
& \times \left( E_{2} \left(\left. \begin{array}{l} {\gamma_{1} ,\gamma_{1} ,1;1,0} \\ {\beta _{1} +1,\beta _{1} ,\alpha_{1} ;\gamma_{1} ,\gamma_{1} ;1,1} \end{array}\right|\begin{array}{c} {\lambda_k T_{1}^{\beta _{1} } } \\ {\delta T_{1}^{\alpha } } \end{array}\right) \right)^{2}+\psi_{k}\lambda_k T_{1}^{\beta _{1} }+\psi_{k}\lambda_k^{2}T_{1}^{\beta _{1}+1 }\Gamma\left(\gamma_{1} \right) \times\nonumber\\
& \times E_{2} \left(\left. \begin{array}{l} {\gamma_{1} ,\gamma_{1} ,1;1,0} \\ {\beta _{1} +2,\beta _{1} ,\alpha_{1} ;\gamma_{1} ,\gamma_{1} ;1,1} \end{array}\right|\begin{array}{c} {\lambda_k T_{1}^{\beta _{1} } } \\ {\delta T_{1}^{\alpha_{1} } } \end{array}\right)+\psi_{k}\lambda_kT_{1}^{\beta _{1} }\Gamma\left(\gamma_{1} \right) \times \nonumber\\
&\times E_{2} \left(\left. \begin{array}{l} {\gamma_{1} ,\gamma_{1} ,1;1,0} \\ {\beta _{1} +1,\beta _{1} ,\alpha_{1};\gamma_{1} ,\gamma_{1} ;1,1} \end{array}\right|\begin{array}{c} {\lambda_k T_{1}^{\beta _{1} } } \\ {\delta T_{1}^{\alpha_{1} } } \end{array}\right)+\lambda_{k}\varphi_{k}\Gamma(\gamma_{1})T_{1}^{\beta_{1}}E_{2} \left(\left. \begin{array}{l} {\gamma_{1} ,\gamma_{1} ,1;1,0} \\ {\beta _{1},\beta _{1} ,\alpha_{1} ;\gamma_{1} ,\gamma_{1} ;1,1} \end{array}\right|\begin{array}{c} {\lambda_k T_{1}^{\beta _{1} } } \\ {\delta T_{1}^{\alpha_{1}} } \end{array}\right)+\nonumber\\
&+\lambda_{k}^{2} \varphi_{k}\Gamma^{2}(\gamma_{1})T_{1}^{2\beta_{1}} E_{2} \left(\left. \begin{array}{l} {\gamma_{1} ,\gamma_{1} ,1;1,0} \\ {\beta _{1},\beta _{1} ,\alpha_{1} ;\gamma_{1} ,\gamma_{1} ;1,1} \end{array}\right|\begin{array}{c} {\lambda_k T_{1}^{\beta _{1} } } \\ {\delta T_{1}^{\alpha_{1} } } \end{array}\right)E_{2} \left(\left. \begin{array}{l} {\gamma_{1} ,\gamma_{1} ,1;1,0} \\ {\beta _{1}+2,\beta _{1} ,\alpha_{1} ;\gamma_{1} ,\gamma_{1} ;1,1} \end{array}\right|\begin{array}{c} {\lambda_k T_{1}^{\beta _{1} } } \\ {\delta T_{1}^{\alpha_{1} } } \end{array}\right)+\nonumber\\
&+\lambda_{k}^{2}\varphi_{k}\Gamma(\gamma_{1})\Gamma\left(\gamma_{2} \right)\left(\xi-T_{1} \right)^{\beta _{2}}T_{1}^{\beta_{1}}E_{2} \left(\left. \begin{array}{l} {\gamma_{2} ,\gamma_{2} ,1;1,0} \\ {\beta _{2} +1,\beta _{2} ,\alpha_{2} ;\gamma_{2} ,\gamma_{2} ;1,1} \end{array}\right|\begin{array}{c} {\lambda_{k} \left(\xi-T_{1} \right)^{\beta _{2} } } \\ {\delta \left(\xi-T_{1} \right)^{\alpha_{2} } } \end{array}\right)\times\nonumber\\
& \times E_{2} \left(\left. \begin{array}{l} {\gamma_{1} ,\gamma_{1} ,1;1,0} \\ {\beta _{1},\beta _{1} ,\alpha_{1} ;\gamma_{1} ,\gamma_{1} ;1,1} \end{array}\right|\begin{array}{c} {\lambda_k T_{1}^{\beta _{1} } } \\ {\delta T_{1}^{\alpha } } \end{array}\right)+T_{1}^{2\beta_{1}}E_{2} \left(\left. \begin{array}{l} {\gamma_{2} ,\gamma_{2} ,1;1,0} \\ {\beta _{2} +1,\beta _{2} ,\alpha_{2} ;\gamma_{2} ,\gamma_{2} ;1,1} \end{array}\right|\begin{array}{c} {\lambda_{k} \left(\xi-T_{1} \right)^{\beta _{2} } } \\ {\delta \left(\xi-T_{1} \right)^{\alpha_{2} } } \end{array}\right)\times\nonumber\\
&\times\lambda_{k}^{3}\varphi_{k}\Gamma^{2}(\gamma_{1})\Gamma(\gamma_{2})E_{2} \left(\left. \begin{array}{l} {\gamma_{1} ,\gamma_{1} ,1;1,0} \\ {\beta _{1},\beta _{1} ,\alpha_{1} ;\gamma_{1} ,\gamma_{1} ;1,1} \end{array}\right|\begin{array}{c} {\lambda_k T_{1}^{\beta _{1} } } \\ {\delta T_{1}^{\alpha_{1} } } \end{array}\right)E_{2} \left(\left. \begin{array}{l} {\gamma_{1} ,\gamma_{1} ,1;1,0} \\ {\beta _{1}+2,\beta _{1} ,\alpha_{1} ;\gamma_{1} ,\gamma_{1} ;1,1} \end{array}\right|\begin{array}{c} {\lambda_k T_{1}^{\beta _{1} } } \\ {\delta T_{1}^{\alpha } } \end{array}\right)\Bigg]. \nonumber
\end{align}
Now we substitute \eqref{41} and \eqref{42} into equation \eqref{23}, after certain simplifications we determine the coefficient ${}_{4}A_{k}$, as follows (see A2 in Appendix section):

\begin{align} \label{43}
         & {}_{4}A_{k}=\frac{1}{\widetilde{\Delta}_{k}}\Bigg[\psi_{k}T_{1}^{\beta_{1}} \Gamma(\gamma_{1}) E_{2} \left(\left. \begin{array}{l} {\gamma_{1} ,\gamma_{1} ,1;1,0} \\ {\beta _{1} +1,\beta _{1} ,\alpha_{1} ;\gamma_{1} ,\gamma_{1} ;1,1} \end{array}\right|\begin{array}{c} {\lambda_k T_{1}^{\beta _{1} } } \\ {\delta T_{1}^{\alpha_{1} } } \end{array}\right)+ \lambda_{k} \psi_{k}T_{1}^{2\beta_{1}} \Gamma^{2}(\gamma_{1}) \times\nonumber\\
&\times\left( E_{2} \left(\left. \begin{array}{l} {\gamma_{1} ,\gamma_{1} ,1;1,0} \\ {\beta _{1} +1,\beta _{1} ,\alpha_{1} ;\gamma_{1} ,\gamma_{1} ;1,1} \end{array}\right|\begin{array}{c} {\lambda_k T_{1}^{\beta _{1} } } \\ {\delta T_{1}^{\alpha_{1} } } \end{array}\right) \right)^{2}-\lambda_{k}\varphi_{k}\Gamma\left(\gamma_{1} \right)\Gamma\left(\gamma_{2} \right)T_{1}^{\beta_{1}}\left(\xi-T_{1} \right)^{\beta _{2}} \times \nonumber \\
&\times E_{2} \left(\left. \begin{array}{l} {\gamma_{1} ,\gamma_{1} ,1;1,0} \\ {\beta _{1},\beta _{1} ,\alpha_{1} ;\gamma_{1} ,\gamma_{1} ;1,1} \end{array}\right|\begin{array}{c} {\lambda_k T_{1}^{\beta _{1} } } \\ {\delta T_{1}^{\alpha_{1} } } \end{array}\right)E_{2} \left(\left. \begin{array}{l} {\gamma_{2} ,\gamma_{2} ,1;1,0} \\ {\beta _{2} +1,\beta _{2} ,\alpha_{2} ;\gamma_{2} ,\gamma_{2} ;1,1} \end{array}\right|\begin{array}{c} {\lambda_{k} \left(\xi-T_{1} \right)^{\beta _{2} } } \\ {\delta \left(\xi-T_{1} \right)^{\alpha_{2} } } \end{array}\right)-\lambda_{k}^{2}T_{1}^{2\beta_{1}}\Gamma\left(\gamma_{2} \right)\times \nonumber\\
& \times\varphi_{k} \Gamma^{2}\left(\gamma_{1} \right)\left(\xi-T_{1} \right)^{\beta _{2}}E_{2} \left(\left. \begin{array}{l} {\gamma_{1} ,\gamma_{1} ,1;1,0} \\ {\beta _{1},\beta _{1} ,\alpha_{1} ;\gamma_{1} ,\gamma_{1} ;1,1} \end{array}\right|\begin{array}{c} {\lambda_k T_{1}^{\beta _{1} } } \\ {\delta T_{1}^{\alpha_{1} } } \end{array}\right)E_{2} \left(\left. \begin{array}{l} {\gamma_{1} ,\gamma_{1} ,1;1,0} \\ {\beta _{1}+2,\beta _{1} ,\alpha ;\gamma_{1} ,\gamma_{1} ;1,1} \end{array}\right|\begin{array}{c} {\lambda_k T_{1}^{\beta _{1} } } \\ {\delta T_{1}^{\alpha_{1} } } \end{array}\right)\times\nonumber\\
& \times E_{2} \left(\left. \begin{array}{l} {\gamma_{2} ,\gamma_{2} ,1;1,0} \\ {\beta _{2} +1,\beta _{2} ,\alpha_{2} ;\gamma_{2} ,\gamma_{2} ;1,1} \end{array}\right|\begin{array}{c} {\lambda_{k} \left(\xi-T_{1} \right)^{\beta _{2} } } \\ {\delta \left(\xi-T_{1} \right)^{\alpha_{2} } } \end{array}\right)+\varphi_{k}\Gamma\left(\gamma_{2} \right)\left(\xi-T_{1} \right)^{\beta _{2}}\times\nonumber \\
&\times E_{2} \left(\left. \begin{array}{l} {\gamma_{2} ,\gamma_{2} ,1;1,0} \\ {\beta _{2} +1,\beta _{2} ,\alpha_{2} ;\gamma_{2} ,\gamma_{2} ;1,1} \end{array}\right|\begin{array}{c} {\lambda_{k} \left(\xi-T_{1} \right)^{\beta _{2} } } \\ {\delta \left(\xi-T_{1} \right)^{\alpha_{2} } } \end{array}\right)+2\varphi_{k}\lambda_{k}\Gamma\left(\gamma_{1} \right)\Gamma\left(\gamma_{2} \right)T_{1}^{\beta_{1}}\left(\xi-T_{1} \right)^{\beta _{2}}\times \nonumber\\
& \times E_{2} \left(\left. \begin{array}{l} {\gamma_{2} ,\gamma_{2} ,1;1,0} \\ {\beta _{2} +1,\beta _{2} ,\alpha_{2}; \gamma_{2} ,\gamma_{2} ; 1,1} \end{array}\right|\begin{array}{c} {\lambda_{k} \left(\xi-T_{1} \right)^{\beta _{2} } } \\ {\delta \left(\xi-T_{1} \right)^{\alpha_{2} } } \end{array}\right)E_{2} \left(\left. \begin{array}{l} {\gamma_{1} ,\gamma_{1} ,1;1,0} \\ {\beta _{1} +1,\beta _{1} ,\alpha_{1} ;\gamma_{1} ,\gamma_{1} ;1,1} \end{array}\right|\begin{array}{c} {\lambda_k T_{1}^{\beta _{1} } } \\ {\delta T_{1}^{\alpha_{1} } } \end{array}\right)+\nonumber\\
&+\varphi_{k}\lambda_{k}^{2}\Gamma^{2}\left(\gamma_{1} \right)\Gamma\left(\gamma_{2} \right)T_{1}^{2\beta_{1}}\left(\xi-T_{1} \right)^{\beta _{2}}E_{2} \left(\left. \begin{array}{l} {\gamma_{2} ,\gamma_{2} ,1;1,0} \\ {\beta _{2} +1,\beta _{2} ,\alpha_{2} ;\gamma_{2} ,\gamma_{2} ;1,1} \end{array}\right|\begin{array}{c} {\lambda_{k} \left(\xi-T_{1} \right)^{\beta _{2} } } \\ {\delta \left(\xi-T_{1} \right)^{\alpha_{2} } } \end{array}\right) \times\nonumber\\
& \times\left( E_{2} \left(\left. \begin{array}{l} {\gamma_{1} ,\gamma_{1} ,1;1,0} \\ {\beta _{1} +1,\beta _{1} ,\alpha_{1};\gamma_{1} ,\gamma_{1} ;1,1} \end{array}\right|\begin{array}{c} {\lambda_k T_{1}^{\beta _{1} } } \\ {\delta T_{1}^{\alpha_{1} } } \end{array}\right) \right)^{2}
- \psi_{k}T_{1}^{\beta_{1}} \Gamma(\gamma_{1}) E_{2} \left(\left. \begin{array}{l} {\gamma_{1} ,\gamma_{1} ,1;1,0} \\ {\beta _{1} ,\beta _{1} ,\alpha_{1} ;\gamma_{1} ,\gamma_{1} ;1,1} \end{array}\right|\begin{array}{c} {\lambda_k T_{1}^{\beta _{1} } } \\ {\delta T_{1}^{\alpha_{1} } } \end{array}\right)+\nonumber\\
&+\psi_{k} \lambda_{k}T_{1}^{\beta_{1}+1}\Gamma(\gamma_{1}) E_{2} \left(\left. \begin{array}{l} {\gamma_{1} ,\gamma_{1} ,1;1,0} \\ {\beta _{1}+2,\beta _{1} ,\alpha_{1} ;\gamma_{1} ,\gamma_{1} ;1,1} \end{array}\right|\begin{array}{c} {\lambda_k T_{1}^{\beta _{1} } } \\ {\delta T_{1}^{\alpha_{1} } } \end{array}\right)-\lambda_{k}\psi_{k}T_{1}^{2\beta_{1}} \Gamma^{2}(\gamma_{1})\times \nonumber\\
& \times E_{2} \left(\left. \begin{array}{l} {\gamma_{1} ,\gamma_{1} ,1;1,0} \\ {\beta _{1} ,\beta _{1} ,\alpha _{1};\gamma_{1} ,\gamma_{1} ;1,1} \end{array}\right|\begin{array}{c} {\lambda_k T_{1}^{\beta _{1} } } \\ {\delta T_{1}^{\alpha_{1} } } \end{array}\right)E_{2} \left(\left. \begin{array}{l} {\gamma_{1} ,\gamma_{1} ,1;1,0} \\ {\beta _{1}+2,\beta _{1} ,\alpha_{1} ;\gamma_{1} ,\gamma_{1} ;1,1} \end{array}\right|\begin{array}{c} {\lambda_k T_{1}^{\beta _{1} } } \\ {\delta T_{1}^{\alpha } } \end{array}\right)+\psi_{k}T_{1}+\nonumber\\
&+2\lambda_{k} E_{2} \left(\left. \begin{array}{l} {\gamma_{1} ,\gamma_{1} ,1;1,0} \\ {\beta _{1}+1,\beta _{1} ,\alpha_{1} ;\gamma_{1} ,\gamma_{1} ;1,1} \end{array}\right|\begin{array}{c} {\lambda_k T_{1}^{\beta _{1} } } \\ {\delta T_{1}^{\alpha_{1} } } \end{array}\right)E_{2} \left(\left. \begin{array}{l} {\gamma_{2} ,\gamma_{2} ,1;1,0} \\ {\beta _{2}+1,\beta _{2} ,\alpha_{2} ;\gamma_{2} ,\gamma_{2} ;1,1} \end{array}\right|\begin{array}{c} {\lambda_k \left(T_{2}- T_{1} \right)^{\beta _{2} } } \\ {\delta \left(T_{2}- T_{1} \right)^{\alpha_{2} } } \end{array}\right)\times\nonumber \\ &\times\psi_{k}T_{1}^{\beta_{1}}\left(T_{2}-T_{1} \right)^{\beta_{2}} \Gamma(\gamma_{1})\Gamma(\gamma_{2})+\lambda_{k}^{2}\psi_{k}\left( E_{2} \left(\left. \begin{array}{l} {\gamma_{1} ,\gamma_{1} ,1;1,0} \\ {\beta _{1} +1,\beta _{1} ,\alpha_{1};\gamma_{1} ,\gamma_{1} ;1,1} \end{array}\right|\begin{array}{c} {\lambda_k T_{1}^{\beta _{1} } } \\ {\delta T_{1}^{\alpha_{1} } } \end{array}\right) \right)^{2}\times\nonumber\\ & \times T_{1}^{2\beta_{1}}\left(T_{2}-T_{1} \right)^{\beta_{2}} \Gamma^{2}(\gamma_{1})\Gamma(\gamma_{2})E_{2} \left(\left. \begin{array}{l} {\gamma_{2} ,\gamma_{2} ,1;1,0} \\ {\beta _{2}+1,\beta _{2} ,\alpha_{2};\gamma_{2} ,\gamma_{2} ; 1,1} \end{array}\right|\begin{array}{c} {\lambda_k \left(T_{2}- T_{1} \right)^{\beta _{2} } } \\ {\delta \left(T_{2}- T_{1} \right)^{\alpha_{2} } } \end{array}\right)-\lambda_{k}\psi_{k}\times\nonumber\\ & \times E_{2} \left(\left. \begin{array}{l} {\gamma_{1} ,\gamma_{1} ,1;1,0} \\ {\beta _{1},\beta _{1} ,\alpha_{1} ;\gamma_{1} ,\gamma_{1} ;1,1} \end{array}\right|\begin{array}{c} {\lambda_k T_{1}^{\beta _{1} } } \\ {\delta T_{1}^{\alpha_{1} } } \end{array}\right)E_{2} \left(\left. \begin{array}{l} {\gamma_{2} ,\gamma_{2} ,1;1,0} \\ {\beta _{2}+1,\beta _{2} ,\alpha_{2} ;\gamma_{2} ,\gamma_{2} ;1,1} \end{array}\right|\begin{array}{c} {\lambda_k \left(T_{2}- T_{1} \right)^{\beta _{2} } } \\ {\delta \left(T_{2}- T_{1} \right)^{\alpha_{2} } } \end{array}\right)\times\nonumber\\ & \times T_{1}^{\beta_{1}}\left(T_{2}-T_{1} \right)^{\beta_{2}} \Gamma(\gamma_{1})\Gamma(\gamma_{2})-\lambda_{k}^{2}T_{1}^{2\beta_{1}}\psi_{k} \Gamma^{2}(\gamma_{1})\Gamma(\gamma_{2})E_{2} \left(\left. \begin{array}{l} {\gamma_{1} ,\gamma_{1} ,1;1,0} \\ {\beta _{1},\beta _{1} ,\alpha_{1};\gamma_{1} ,\gamma_{1} ;1,1} \end{array}\right|\begin{array}{c} {\lambda_k T_{1}^{\beta _{1} } } \\ {\delta T_{1}^{\alpha_{1} } } \end{array}\right)\times\nonumber\\ & \times  E_{2} \left(\left. \begin{array}{l} {\gamma_{1} ,\gamma_{1} ,1;1,0} \\ {\beta _{1}+2,\beta _{1} ,\alpha_{1} ;\gamma_{1} ,\gamma_{1} ;1,1} \end{array}\right|\begin{array}{c} {\lambda_k T_{1}^{\beta _{1} } } \\ {\delta T_{1}^{\alpha_{1} } } \end{array}\right)E_{2} \left(\left. \begin{array}{l} {\gamma_{2} ,\gamma_{2} ,1;1,0} \\ {\beta _{2}+1,\beta _{2} ,\alpha_{2}; \gamma_{2} ,\gamma_{2} ;1,1} \end{array}\right|\begin{array}{c} {\lambda_k \left(T_{2}- T_{1} \right)^{\beta _{2} } } \\ {\delta \left(T_{2}- T_{1} \right)^{\alpha_{2} } } \end{array}\right)\times\nonumber\\ & \times \left(T_{2}-T_{1} \right)^{\beta_{2}}+\psi_{k}\Gamma(\gamma_{2})\left(T_{2}-T_{1} \right)^{\beta_{2}}E_{2} \left(\left. \begin{array}{l} {\gamma_{2} ,\gamma_{2} ,1;1,0} \\ {\beta _{2}+1,\beta _{2} ,\alpha_{2};\gamma_{2} ,\gamma_{2} ;1,1} \end{array}\right|\begin{array}{c} {\lambda_k \left(T_{2}- T_{1} \right)^{\beta _{2} } } \\ {\delta \left(T_{2}- T_{1} \right)^{\alpha_{2} } } \end{array}\right)-\nonumber\\
& -\varphi_{k}\Gamma(\gamma_{2})\left(T_{2}-T_{1} \right)^{\beta_{2}}E_{2} \left(\left. \begin{array}{l} {\gamma_{2} ,\gamma_{2} ,1;1,0} \\ {\beta _{2}+1,\beta _{2} ,\alpha_{2} ;\gamma_{2} ,\gamma_{2} ;1,1} \end{array}\right|\begin{array}{c} {\lambda_k \left(T_{2}- T_{1} \right)^{\beta _{2} } } \\ {\delta \left(T_{2}- T_{1} \right)^{\alpha_{2} } } \end{array}\right)-2\lambda_{k}\varphi_{k}\left(T_{2}-T_{1} \right)^{\beta_{2}}T_1^{\beta_{1}}\Gamma(\gamma_{1})\times\nonumber\\ 
& \times \Gamma(\gamma_{2})E_{2} \left(\left. \begin{array}{l} {\gamma_{1} ,\gamma_{1} ,1;1,0} \\ {\beta _{1}+1,\beta _{1} ,\alpha_{1} ;\gamma_{1} ,\gamma_{1} ;1,1} \end{array}\right|\begin{array}{c} {\lambda_k T_{1}^{\beta _{1} } } \\ {\delta T_{1}^{\alpha_{1} } } \end{array}\right) E_{2} \left(\left. \begin{array}{l} {\gamma_{2} ,\gamma_{2} ,1;1,0} \\ {\beta _{2}+1,\beta _{2} ,\alpha_{2} ;\gamma_{2} ,\gamma_{2} ;1,1} \end{array}\right|\begin{array}{c} {\lambda_k \left(T_{2}- T_{1} \right)^{\beta _{2} } } \\ {\delta \left(T_{2}- T_{1} \right)^{\alpha_{2} } } \end{array}\right)- \nonumber\\
&-\lambda_{k}^{2}E_{2} \left(\left. \begin{array}{l} {\gamma_{2} ,\gamma_{2} ,1;1,0} \\ {\beta _{2}+1,\beta _{2} ,\alpha_{2} ;\gamma_{2} ,\gamma_{2} ; 1,1} \end{array}\right|\begin{array}{c} {\lambda_k \left(T_{2}- T_{1} \right)^{\beta _{2} } } \\ {\delta \left(T_{2}- T_{1} \right)^{\alpha_{2} } } \end{array}\right)\left( E_{2} \left(\left. \begin{array}{l} {\gamma_{1} ,\gamma_{1} ,1;1,0} \\ {\beta _{1}+1,\beta _{1} ,\alpha _{1};\gamma_{1} ,\gamma_{1} ;1,1} \end{array}\right|\begin{array}{c} {\lambda_k T_{1}^{\beta _{1} } } \\ {\delta T_{1}^{\alpha_{1} } } \end{array}\right) \right)^{2}\times\nonumber\\
& \times \varphi_{k}\left(T_{2}-T_{1} \right)^{\beta_{2}}T_1^{2\beta_{1}}\Gamma^{2}(\gamma_{1}) \Gamma(\gamma_{2})+\lambda_{k}\varphi_{k}\Gamma(\gamma_{1}) \Gamma(\gamma_{2})T_1^{\beta_{1}}E_{2} \left(\left. \begin{array}{l} {\gamma_{1} ,\gamma_{1} ,1;1,0} \\ {\beta _{1},\beta _{1} ,\alpha_{1};\gamma_{1} ,\gamma_{1} ;1,1} \end{array}\right|\begin{array}{c} {\lambda_k T_{1}^{\beta _{1} } } \\ {\delta T_{1}^{\alpha_{1} } } \end{array}\right)\times \nonumber\\
& \times \left(T_{2}-T_{1} \right)^{\beta_{2}}E_{2} \left(\left. \begin{array}{l} {\gamma_{2} ,\gamma_{2} ,1;1,0} \\ {\beta _{2}+1,\beta _{2} ,\alpha_{2} ;\gamma_{2} ,\gamma_{2} ;1,1} \end{array}\right|\begin{array}{c} {\lambda_k \left(T_{2}- T_{1} \right)^{\beta _{2} } } \\ {\delta \left(T_{2}- T_{1} \right)^{\alpha_{2} } } \end{array}\right)+\lambda_{k}^{2}\varphi_{k}\left(T_{2}-T_{1} \right)^{\beta_{2}}T_1^{2\beta_{1}}\Gamma^{2}(\gamma_{1}) \times\nonumber\\
&\times \Gamma(\gamma_{2}) E_{2} \left(\left. \begin{array}{l} {\gamma_{1} ,\gamma_{1} ,1;1,0} \\ {\beta _{1},\beta _{1} ,\alpha_{1} ;\gamma_{1} ,\gamma_{1} ;1,1} \end{array}\right|\begin{array}{c} {\lambda_k T_{1}^{\beta _{1} } } \\ {\delta T_{1}^{\alpha_{1} } } \end{array}\right)E_{2} \left(\left. \begin{array}{l} {\gamma_{1} ,\gamma_{1} ,1;1,0} \\ {\beta _{1}+2,\beta _{1} ,\alpha_{1} ;\gamma_{1} ,\gamma_{1} ;1,1} \end{array}\right|\begin{array}{c} {\lambda_k T_{1}^{\beta _{1} } } \\ {\delta T_{1}^{\alpha_{1} } } \end{array}\right)\times\nonumber\\
& \times E_{2} \left(\left. \begin{array}{l} {\gamma_{2} ,\gamma_{2} ,1;1,0} \\ {\beta _{2}+1,\beta _{2} ,\alpha_{2} ;\gamma_{2} ,\gamma_{2} ;1,1} \end{array}\right|\begin{array}{c} {\lambda_k \left(T_{2}- T_{1} \right)^{\beta _{2} } } \\ {\delta \left(T_{2}- T_{1} \right)^{\alpha_{2} } } \end{array}\right).
    \end{align}
By substituting \eqref{42} and \eqref{43} into the equation \eqref{26}, we determine the coefficient ${}_{5}A_{k}$ as follows:
\begin{align} \label{44}
& {}_{5}A_{k}=\frac{1}{\widetilde\Delta_{k}}\Bigg[-2\lambda_{k}\varphi_{k}\Gamma(\gamma_{2})\left(\xi- T_{1} \right)^{\beta_{2}}E_{2} \left(\left. \begin{array}{l} {\gamma_{2} ,\gamma_{2} ,1;1,0} \\ {\beta _{2}+1,\beta _{2} ,\alpha_{2} ;\gamma_{2} ,\gamma_{2} ;1,1} \end{array}\right|\begin{array}{c} {\lambda_k \left(\xi- T_{1} \right)^{\beta_{2}} } \\ {\delta \left(\xi- T_{1} \right)^{\alpha_{2} } } \end{array}\right)-4\lambda_{k}^{2}\varphi_{k}\times\nonumber\\
& \times \Gamma(\gamma_{1}) \Gamma(\gamma_{2})E_{2} \left(\left. \begin{array}{l} {\gamma_{2} ,\gamma_{2} ,1;1,0} \\ {\beta _{2}+1,\beta _{2} ,\alpha_{2} ;\gamma_{2} ,\gamma_{2} ;1,1} \end{array}\right|\begin{array}{c} {\lambda_k \left(\xi- T_{1} \right)^{\beta_{2}} } \\ {\delta \left(\xi- T_{1} \right)^{\alpha_{2} } } \end{array}\right)E_{2} \left(\left. \begin{array}{l} {\gamma_{1} ,\gamma_{1} ,1;1,0} \\ {\beta _{1}+1,\beta _{1} ,\alpha_{1} ;\gamma_{1} ,\gamma_{1} ;1,1} \end{array}\right|\begin{array}{c} {\lambda_k T_{1}^{\beta _{1} } } \\ {\delta T_{1}^{\alpha_{1} } } \end{array}\right)\times\nonumber\\
& \times T_1^{\beta_{1}}\left(\xi- T_{1} \right)^{\beta_{2}}-2\lambda_{k}^{3}\varphi_{k}T_1^{2\beta_{1}}\Gamma^{2}(\gamma_{1}) \Gamma(\gamma_{2})E_{2} \left(\left. \begin{array}{l} {\gamma_{2} ,\gamma_{2} ,1;1,0} \\ {\beta _{2}+1,\beta _{2} ,\alpha_{2} ;\gamma_{2} ,\gamma_{2} ;1,1} \end{array}\right|\begin{array}{c} {\lambda_k \left(\xi- T_{1} \right)^{\beta_{2}} } \\ {\delta \left(\xi- T_{1} \right)^{\alpha_{2} } } \end{array}\right)\times\nonumber\\
& \times \left(\xi- T_{1} \right)^{\beta_{2}}\left( E_{2} \left(\left. \begin{array}{l} {\gamma_{1} ,\gamma_{1} ,1;1,0} \\ {\beta _{1}+1,\beta _{1} ,\alpha_{1} ;\gamma_{1} ,\gamma_{1} ;1,1} \end{array}\right|\begin{array}{c} {\lambda_k T_{1}^{\beta _{1} } } \\ {\delta T_{1}^{\alpha_{1} } } \end{array}\right) \right)^{2}+4\lambda_{k}^{2}\varphi_{k}\Gamma(\gamma_{1}) \Gamma(\gamma_{2})T_1^{\beta_{1}}\left(\xi- T_{1} \right)^{\beta_{2}}\times\nonumber\\
& \times E_{2} \left(\left. \begin{array}{l} {\gamma_{2} ,\gamma_{2} ,1;1,0} \\ {\beta _{2}+1,\beta _{2} ,\alpha_{2} ;\gamma_{2} ,\gamma_{2} ;1,1} \end{array}\right|\begin{array}{c} {\lambda_k \left(\xi- T_{1} \right)^{\beta_{2}} } \\ {\delta \left(\xi- T_{1} \right)^{\alpha_{2} } } \end{array}\right)E_{2} \left(\left. \begin{array}{l} {\gamma_{1} ,\gamma_{1} ,1;1,0} \\ {\beta _{1},\beta _{1} ,\alpha_{1} ;\gamma_{1} ,\gamma_{1} ;1,1} \end{array}\right|\begin{array}{c} {\lambda_k T_{1}^{\beta _{1} } } \\ {\delta T_{1}^{\alpha_{1} } } \end{array}\right)+\lambda_{k}^{3}\times\nonumber\\
& \varphi_{k}T_1^{\beta_{1}}\Gamma^{2}(\gamma_{1}) \Gamma(\gamma_{2})\left( \left(\xi- T_{1} \right)^{\beta_{2}}+T_1^{\beta_{1}} \right)E_{2} \left(\left. \begin{array}{l} {\gamma_{2} ,\gamma_{2} ,1;1,0} \\ {\beta _{2}+1,\beta _{2} ,\alpha_{2} ;\gamma_{2} ,\gamma_{2} ;1,1} \end{array}\right|\begin{array}{c} {\lambda_k \left(\xi- T_{1} \right)^{\beta_{2}} } \\ {\delta \left(\xi- T_{1} \right)^{\alpha_{2} } } \end{array}\right)\times \nonumber\\
& \times E_{2} \left(\left. \begin{array}{l} {\gamma_{1} ,\gamma_{1} ,1;1,0} \\ {\beta _{1},\beta _{1} ,\alpha_{1} ;\gamma_{1} ,\gamma_{1} ;1,1} \end{array}\right|\begin{array}{c} {\lambda_k T_{1}^{\beta _{1} } } \\ {\delta T_{1}^{\alpha_{1} } } \end{array}\right)E_{2} \left(\left. \begin{array}{l} {\gamma_{1} ,\gamma_{1} ,1;1,0} \\ {\beta _{1}+2,\beta _{1} ,\alpha_{1} ;\gamma_{1} ,\gamma_{1} ; 1,1} \end{array}\right|\begin{array}{c} {\lambda_k T_{1}^{\beta _{1} } } \\ {\delta T_{1}^{\alpha_{1} } } \end{array}\right)-\lambda_{k}^{2}\varphi_{k}\times\nonumber\\
& \times \Gamma(\gamma_{1}) \Gamma(\gamma_{2})T_1^{\beta_{1}}\left(T_{2}-T_{1} \right)^{\beta_{2}}E_{2} \left(\left. \begin{array}{l} {\gamma_{2} ,\gamma_{2} ,1;1,0} \\ {\beta _{2}+1,\beta _{2} ,\alpha_{2} ;\gamma_{2} ,\gamma_{2} ;1,1} \end{array}\right|\begin{array}{c} {\lambda_k \left(T_{2}- T_{1} \right)^{\beta _{2} } } \\ {\delta \left(T_{2}- T_{1} \right)^{\alpha_{2} } } \end{array}\right)\times\nonumber\\
& \times E_{2} \left(\left. \begin{array}{l} {\gamma_{1} ,\gamma_{1} ,1;1,0} \\ {\beta _{1},\beta _{1} ,\alpha_{1} ;\gamma_{1} ,\gamma_{1} ;1,1} \end{array}\right|\begin{array}{c} {\lambda_k T_{1}^{\beta _{1} } } \\ {\delta T_{1}^{\alpha_{1} } } \end{array}\right)-\lambda_{k}^{3}\varphi_{k}\Gamma^{2}(\gamma_{1}) \Gamma(\gamma_{2})T_1^{2\beta_{1}}\left(T_{2}-T_{1} \right)^{\beta_{2}}\times \nonumber\\
&\times E_{2} \left(\left. \begin{array}{l} {\gamma_{2} ,\gamma_{2} ,1;1,0} \\ {\beta _{2}+1,\beta _{2} ,\alpha_{2} ;\gamma_{2} ,\gamma_{2} ;1,1} \end{array}\right|\begin{array}{c} {\lambda_k \left(T_{2}- T_{1} \right)^{\beta _{2} } } \\ {\delta \left(T_{2}- T_{1} \right)^{\alpha_{2} } } \end{array}\right)E_{2} \left(\left. \begin{array}{l} {\gamma_{1} ,\gamma_{1} ,1;1,0} \\ {\beta _{1}+2,\beta _{1} ,\alpha_{1} ;\gamma_{1} ,\gamma_{1} ;1,1} \end{array}\right|\begin{array}{c} {\lambda_k T_{1}^{\beta _{1} } } \\ {\delta T_{1}^{\alpha_{1} } } \end{array}\right)\times\nonumber\\
& \times E_{2} \left(\left. \begin{array}{l} {\gamma_{1} ,\gamma_{1} ,1;1,0} \\ {\beta _{1},\beta _{1} ,\alpha_{1} ;\gamma_{1} ,\gamma_{1} ;1,1} \end{array}\right|\begin{array}{c} {\lambda_k T_{1}^{\beta _{1} } } \\ {\delta T_{1}^{\alpha_{1} } } \end{array}\right)+\psi_{k}-\varphi_{k}-2\lambda_{k}\varphi_{k}T_{1}^{\beta _{1} }\Gamma(\gamma_{1}) \times \nonumber\\
& \times E_{2} \left(\left. \begin{array}{l} {\gamma_{1} ,\gamma_{1} ,1;1,0} \\ {\beta _{1}+1,\beta _{1} ,\alpha_{1} ;\gamma_{1} ,\gamma_{1} ;1,1} \end{array}\right|\begin{array}{c} {\lambda_k T_{1}^{\beta _{1} } } \\ {\delta T_{1}^{\alpha_{1} } } \end{array}\right)-\lambda_{k}^{2}\left( E_{2} \left(\left. \begin{array}{l} {\gamma_{1} ,\gamma_{1} ,1;1,0} \\ {\beta _{1}+1,\beta _{1} ,\alpha_{1} ;\gamma_{1} ,\gamma_{1} ;1,1} \end{array}\right|\begin{array}{c} {\lambda_k T_{1}^{\beta _{1} } } \\ {\delta T_{1}^{\alpha_{1} } } \end{array}\right) \right)^{2}\times \nonumber\\
& \times \varphi_{k}T_{1}^{2\beta _{1} }\Gamma^{2}(\gamma_{1})+\lambda_{k}\varphi_{k} T_{1}^{\beta _{1} }\Gamma(\gamma_{1})E_{2} \left(\left. \begin{array}{l} {\gamma_{1} ,\gamma_{1} ,1;1,0} \\ {\beta _{1},\beta _{1} ,\alpha_{1} ;\gamma_{1} ,\gamma_{1} ;1,1} \end{array}\right|\begin{array}{c} {\lambda_k T_{1}^{\beta _{1} } } \\ {\delta T_{1}^{\alpha_{1} } } \end{array}\right)+\lambda_{k}^{2}\varphi_{k}T_{1}^{2\beta _{1} }\Gamma^{2}(\gamma_{1})\times \nonumber\\
& \times  E_{2} \left(\left. \begin{array}{l} {\gamma_{1} ,\gamma_{1} ,1;1,0} \\ {\beta _{1},\beta _{1} ,\alpha_{1} ;\gamma_{1} ,\gamma_{1} ;1,1} \end{array}\right|\begin{array}{c} {\lambda_k T_{1}^{\beta _{1} } } \\ {\delta T_{1}^{\alpha_{1} } } \end{array}\right)E_{2} \left(\left. \begin{array}{l} {\gamma_{1} ,\gamma_{1} ,1;1,0} \\ {\beta _{1}+2,\beta _{1} ,\alpha_{1} ;\gamma_{1} ,\gamma_{1} ;1,1} \end{array}\right|\begin{array}{c} {\lambda_k T_{1}^{\beta _{1} } } \\ {\delta T_{1}^{\alpha } } \end{array}\right)-\lambda_k^{2}\psi_{k}T_{1}^{2\beta _{1} }\Gamma^{2}(\gamma_{1})\times\nonumber\\
& \times\left( E_{2} \left(\left. \begin{array}{l} {\gamma_{1} ,\gamma_{1} ,1; 1,0} \\ {\beta _{1}+1,\beta _{1} ,\alpha_{1}; \gamma_{1} ,\gamma_{1} ;1,1} \end{array}\right|\begin{array}{c} {\lambda_k T_{1}^{\beta _{1} } } \\ {\delta T_{1}^{\alpha_{1} } } \end{array}\right) \right)^{2}+\lambda_{k}\psi_{k}E_{2} \left(\left. \begin{array}{l} {\gamma_{1} ,\gamma_{1} ,1;1,0} \\ {\beta _{1},\beta _{1} ,\alpha_{1} ;\gamma_{1} ,\gamma_{1} ;1,1} \end{array}\right|\begin{array}{c} {\lambda_k T_{1}^{\beta _{1} } } \\ {\delta T_{1}^{\alpha_{1} } } \end{array}\right)\times\nonumber\\
& \times T_{1}^{\beta _{1} }\Gamma(\gamma_{1})+E_{2} \left(\left. \begin{array}{l} {\gamma_{1} ,\gamma_{1} ,1;1,0} \\ {\beta _{1},\beta _{1} ,\alpha_{1} ;\gamma_{1} ,\gamma_{1} ;1,1} \end{array}\right|\begin{array}{c} {\lambda_k T_{1}^{\beta _{1} } } \\ {\delta T_{1}^{\alpha_{1} } } \end{array}\right)E_{2} \left(\left. \begin{array}{l} {\gamma_{1} ,\gamma_{1} ,1;1,0} \\ {\beta _{1}+2,\beta _{1} ,\alpha_{1} ;\gamma_{1} ,\gamma_{1} ;1,1} \end{array}\right|\begin{array}{c} {\lambda_k T_{1}^{\beta _{1} } } \\ {\delta T_{1}^{\alpha_{1} } } \end{array}\right)\times\nonumber\\
& \times \lambda_{k}^{2}\psi_{k}T_{1}^{2\beta _{1} }\Gamma^{2}(\gamma_{1})-2\lambda_{k}^{2}\psi_{k}\Gamma(\gamma_{1}) \Gamma(\gamma_{2})T_1^{\beta_{1}}\left(T_{2}-T_{1} \right)^{\beta_{2}}E_{2} \left(\left. \begin{array}{l} {\gamma_{1} ,\gamma_{1} ,1;1,0} \\ {\beta _{1}+1,\beta _{1} ,\alpha_{1} ;\gamma_{1} ,\gamma_{1} ;1,1} \end{array}\right|\begin{array}{c} {\lambda_k T_{1}^{\beta _{1} } } \\ {\delta T_{1}^{\alpha_{1} } } \end{array}\right)\times \nonumber\\
& \times E_{2} \left(\left. \begin{array}{l} {\gamma_{2} ,\gamma_{2} ,1;1,0} \\ {\beta _{2}+1,\beta _{2} ,\alpha_{2} ;\gamma_{2} ,\gamma_{2} ;1,1} \end{array}\right|\begin{array}{c} {\lambda_k \left(T_{2}- T_{1} \right)^{\beta _{2} } } \\ {\delta \left(T_{2}- T_{1} \right)^{\alpha_{2} } }\end{array}\right)-\lambda_{k}^{3}\psi_{k}\Gamma^{2}(\gamma_{1}) \Gamma(\gamma_{2})T_1^{2\beta_{1}}\left(T_{2}-T_{1} \right)^{\beta_{2}}\times \nonumber \\
& \times \left( E_{2} \left(\left. \begin{array}{l} {\gamma_{1} ,\gamma_{1} ,1;1,0} \\ {\beta _{1}+1,\beta _{1} ,\alpha_{1} ;\gamma_{1} ,\gamma_{1} ;1,1} \end{array}\right|\begin{array}{c} {\lambda_k T_{1}^{\beta _{1} } } \\ {\delta T_{1}^{\alpha_{1} } } \end{array}\right) \right)^{2}E_{2} \left(\left. \begin{array}{l} {\gamma_{2} ,\gamma_{2} ,1;1,0} \\ {\beta _{2}+1,\beta _{2} ,\alpha_{2} ;\gamma_{2} ,\gamma_{2} ;1,1} \end{array}\right|\begin{array}{c} {\lambda_k \left(T_{2}- T_{1} \right)^{\beta _{2} } } \\ {\delta \left(T_{2}- T_{1} \right)^{\alpha_{2} } } \end{array}\right)+\nonumber\\
& +\lambda_{k}^{2}\psi_{k}T_1^{\beta_{1}} E_{2} \left(\left. \begin{array}{l} {\gamma_{1} ,\gamma_{1} ,1;1,0} \\ {\beta _{1},\beta _{1} ,\alpha_{1} ;\gamma_{1} ,\gamma_{1} ;1,1} \end{array}\right|\begin{array}{c} {\lambda_k T_{1}^{\beta _{1} } } \\ {\delta T_{1}^{\alpha_{1} } } \end{array}\right) E_{2} \left(\left. \begin{array}{l} {\gamma_{2} ,\gamma_{2}, 1;1,0} \\ {\beta _{2}+1,\beta _{2} ,\alpha_{2} ;\gamma_{2} ,\gamma_{2} ;1,1} \end{array}\right|\begin{array}{c} {\lambda_k \left(T_{2}- T_{1} \right)^{\beta _{2} } } \\ {\delta \left(T_{2}- T_{1} \right)^{\alpha_{2} } } \end{array}\right)+\nonumber\\
& \times \Gamma(\gamma_{1}) \Gamma(\gamma_{2})\left(T_{2}-T_{1} \right)^{\beta_{2}}+\lambda_{k}^{3}\psi_{k} \Gamma(\gamma_{2})T_1^{2\beta_{1}}\left(T_{2}- T_{1} \right)^{\beta _{2} } E_{2} \left(\left. \begin{array}{l} {\gamma_{1} ,\gamma_{1} ,1;1,0} \\ {\beta _{1},\beta _{1} ,\alpha_{1} ;\gamma_{1} ,\gamma_{1} ;1,1} \end{array}\right|\begin{array}{c} {\lambda_k T_{1}^{\beta _{1} } } \\ {\delta T_{1}^{\alpha_{1} } } \end{array}\right)\times\nonumber\\
& \times \Gamma^{2}(\gamma_{1}) E_{2} \left(\left. \begin{array}{l} {\gamma_{1} ,\gamma_{1} ,1;1,0} \\ {\beta _{1}+2,\beta _{1} ,\alpha_{1} ;\gamma_{1} ,\gamma_{1} ;1,1} \end{array}\right|\begin{array}{c} {\lambda_k T_{1}^{\beta _{1} } } \\ {\delta T_{1}^{\alpha_{1} } } \end{array}\right)E_{2} \left(\left. \begin{array}{l} {\gamma_{2} ,\gamma_{2} ,1;1,0} \\ {\beta _{2}+1,\beta _{2} ,\alpha_{2} ;\gamma_{2} ,\gamma_{2} ;1,1} \end{array}\right|\begin{array}{c} {\lambda_k \left(T_{2}- T_{1} \right)^{\beta _{2} } } \\ {\delta \left(T_{2}- T_{1} \right)^{\alpha_{2} } } \end{array}\right) -\nonumber\\
& - \lambda_{k}\psi_{k}\Gamma(\gamma_{2})\left(T_{2}- T_{1} \right)^{\beta _{2} } E_{2} \left(\left. \begin{array}{l} {\gamma_{2} ,\gamma_{2} ,1;1,0} \\ {\beta _{2}+1,\beta _{2} ,\alpha_{2} ;\gamma_{2} ,\gamma_{2} ;1,1} \end{array}\right|\begin{array}{c} {\lambda_k \left(T_{2}- T_{1} \right)^{\beta _{2} } } \\ {\delta \left(T_{2}- T_{1} \right)^{\alpha_{2} } } \end{array}\right)+\lambda_{k}\varphi_{k}\left(T_{2}- T_{1} \right)^{\beta _{2} }\times\nonumber\\
&\times\Gamma(\gamma_{2})E_{2} \left(\left. \begin{array}{l} {\gamma_{2} ,\gamma_{2} ,1;1,0} \\ {\beta _{2}+1,\beta _{2} ,\alpha_{2}; \gamma_{2} ,\gamma_{2} ;1,1} \end{array}\right|\begin{array}{c} {\lambda_k \left(T_{2}- T_{1} \right)^{\beta _{2} } } \\ {\delta \left(T_{2}- T_{1} \right)^{\alpha_{2} } } \end{array}\right)-2\lambda_{k}^{2}\varphi_{k}T_{1}^{\beta_{1}}\left(T_{2}- T_{1} \right)^{\beta _{2} }\Gamma(\gamma_{1}) \Gamma(\gamma_{2})\times\nonumber\\
& \times  E_{2} \left(\left. \begin{array}{l} {\gamma_{1} ,\gamma_{1} ,1;1,0} \\ {\beta _{1}+1,\beta _{1} ,\alpha_{1} ;\gamma_{1} ,\gamma_{1} ;1,1} \end{array}\right|\begin{array}{c} {\lambda_k T_{1}^{\beta _{1} } } \\ {\delta T_{1}^{\alpha_{1} } } \end{array}\right)E_{2} \left(\left. \begin{array}{l} {\gamma_{2} ,\gamma_{2} ,1;1,0} \\ {\beta _{2}+1,\beta _{2} ,\alpha_{2} ;\gamma_{2} ,\gamma_{2} ;1,1} \end{array}\right|\begin{array}{c} {\lambda_k \left(T_{2}- T_{1} \right)^{\beta _{2} } } \\ {\delta \left(T_{2}- T_{1} \right)^{\alpha_{2} } } \end{array}\right)+\nonumber\\
&+\lambda_{k}^{3}\varphi_{k}T_{1}^{2\beta_{1}}\left(T_{2}- T_{1} \right)^{\beta _{2} }\Gamma^{2}(\gamma_{1}) \Gamma(\gamma_{2}) \left( E_{2} \left(\left. \begin{array}{l} {\gamma_{1} ,\gamma_{1} ,1;1,0} \\ {\beta _{1}+1,\beta _{1} ,\alpha_{1} ;\gamma_{1} ,\gamma_{1} ;1,1} \end{array}\right|\begin{array}{c} {\lambda_k T_{1}^{\beta _{1} } } \\ {\delta T_{1}^{\alpha_{1} } } \end{array}\right)\right)^{2}\times\nonumber\\
& \times E_{2} \left(\left. \begin{array}{l} {\gamma_{2} ,\gamma_{2} ,1;1,0} \\ {\beta _{2}+1,\beta _{2} ,\alpha_{2} ;\gamma_{2} ,\gamma_{2} ;1,1} \end{array}\right|\begin{array}{c} {\lambda_k \left(T_{2}- T_{1} \right)^{\beta _{2} } } \\ {\delta \left(T_{2}- T_{1} \right)^{\alpha_{2} } } \end{array}\right)\Bigg].
\end{align}
   Now, to prove the uniform convergence of solutions in the form of a multiple series,we will write the following estimates using the estimate for the bivariate Mittag-Leffler type function $E_{2}(x,y)$ \cite{12}, as presented in \cite{14}.

\begin{equation} \label{45}
	\begin{aligned}
		& \left| {{E}_{2}}\left( \left. \begin{matrix}
			\gamma_{1} ,\gamma_{1} ,1;1,0  \\
			{{\beta }_{1}},{{\beta }_{1}},\alpha_{1} ;\gamma_{1} ,\gamma_{1} ;1,1  \\
		\end{matrix} \right|\begin{matrix}
			\lambda_{k}{{t}^{{{\beta }_{1}}}}  \\
			\delta {{t}^{\alpha_{1} }}  \\
		\end{matrix} \right) \right|\le C_{1}, \\
	\end{aligned}
\end{equation}

\begin{equation} \label{46}
	\begin{aligned}
		&\left| {{E}_{2}}\left( \left. \begin{matrix}
			\gamma_{1} ,\gamma_{1} ,1;1,0  \\
			{{\beta }_{1}}+1,{{\beta }_{1}},\alpha_{1} ;\gamma_{1} ,\gamma_{1} ;1,1  \\
		\end{matrix} \right|\begin{matrix}
			\lambda_{k}{{t}^{{{\beta }_{1}}}}  \\
			\delta {{t}^{\alpha_{1} }}  \\
		\end{matrix} \right) \right|\le C_{2}, \\
	\end{aligned}
\end{equation}

\begin{equation} \label{47}
	\begin{aligned}
		& \left| {{E}_{2}}\left( \left. \begin{matrix}
			\gamma_{1} ,\gamma_{1} ,1;1,0  \\
			{{\beta }_{1}}+2,{{\beta }_{1}},\alpha_{1} ;\gamma_{1} ,\gamma_{1} ;1,1  \\
		\end{matrix} \right|\begin{matrix}
			\lambda_{k}{{t}^{{{\beta }_{1}}}}  \\
			\delta {{t}^{\alpha_{1} }}  \\
		\end{matrix} \right) \right|\le C_{3}, \\
	\end{aligned}
\end{equation}

\begin{equation} \label{48}
	\begin{aligned}
		& \left| {{E}_{2}}\left( \left. \begin{matrix}
			\gamma_{2} ,\gamma_{2} ,1;1,0  \\
			{{\beta }_{2}}+1,{{\beta }_{2}},\alpha_{2} ;\gamma_{2} ,\gamma_{2} ;1,1  \\
		\end{matrix} \right|\begin{matrix}
			\lambda_{k}{{(t-T_{1})}^{{{\beta }_{2}}}}  \\
			\delta {{(t-T_{1})}^{\alpha_{2} }}  \\
		\end{matrix} \right) \right|\le C_{4},  \\
	\end{aligned}
\end{equation}

\begin{equation} \label{49}
	\begin{aligned}
		& \left| {{E}_{2}}\left( \left. \begin{matrix}
			\gamma_{3} ,\gamma_{3} ,1;1,0  \\
			{{\beta }_{3}}+1,{{\beta }_{3}},\alpha_{3} ;\gamma_{3} ,\gamma_{3} ;1,1  \\
		\end{matrix} \right|\begin{matrix}
			\lambda_{k}{{(t-T_{2})}^{{{\beta }_{3}}}}  \\
			\delta {{(t-T_{2})}^{\alpha_{3} }}  \\
		\end{matrix} \right) \right|\le C_{5},  \\
	\end{aligned}
\end{equation}

\begin{equation} \label{50}
	\begin{aligned}
		& \left| {{E}_{2}}\left( \left. \begin{matrix}
			\gamma_{3} ,\gamma_{3} ,1;1,0  \\
			{{\beta }_{3}}+2,{{\beta }_{3}},\alpha_{3} ;\gamma_{3} ,\gamma_{3} ;1,1  \\
		\end{matrix} \right|\begin{matrix}
			\lambda_{k}{{(t-T_{2})}^{{{\beta }_{3}}}}  \\
			\delta {{(t-T_{2})}^{\alpha_{3} }}  \\
		\end{matrix} \right) \right|\le C_{6}. \\
	\end{aligned}
\end{equation}

For convenience, we performed the above calculations without substituting the value of $\lambda_{k}$. Now, to show the uniform convergence of the solution, we take $\lambda_{k}$ as $-\mu_{k}^2$. Here, it is known that $\mu_{k}=k\pi-\frac{\pi}{4}$.

\section{Convergence of infinite series.}
\begin{theorem} \cite{9}
    Let $f(x)$ be a function defined on the interval [0,1] such that $f(x)$ is differentiable $2s$ times $s>1$ and such that 
    \begin{enumerate}[label=\alph*)]
    \item $f(0)=f'(0)=...=f^{(2s-1)}(0)=0$,
    \item $f^{(2s)}(x)$ is bounded (this derivative may not exist at certain points),
    \item $f(1)=f'(1)=...=f^{(2s-2)}(1)=0$.
    Then the following inequality is satisfied by the Fourier-Bessel coefficients of $f(x)$:
    $$\left| c_{n} \right|\le \frac{C}{\lambda^{(2s-\frac{1}{2})}}, (C=const)$$
\end{enumerate}
\end{theorem}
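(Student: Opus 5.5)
The estimate comes from integrating by parts repeatedly in the coefficient integral. Each integration is powered by the Bessel equation, and the boundary terms are killed by hypotheses a) and c).

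Write the coefficient as
\[
c_n=\frac{2}{J_1^2(\mu_n)}\int_0^1 x f(x)J_0(\mu_n x)\,dx ,
\]
so $\lambda$ in the statement stands for $\mu_n$, with $\mu_n\sim n\pi-\pi/4$. The main identity is
\[
\bigl(xJ_1(\mu x)\bigr)'=\mu xJ_0(\mu x),
\]
together with its companion $(x^{-\nu}J_\nu(\mu x))'=-\mu x^{-\nu}J_{\nu+1}(\mu x)$ and more generally $(x^{\nu+1}J_{\nu+1}(\mu x))'=\mu x^{\nu+1}J_\nu(\mu x)$. I would replace $xJ_0(\mu_n x)$ by $\mu_n^{-1}(xJ_1(\mu_n x))'$ and integrate by parts. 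The boundary term at $x=1$ carries the factor $f(1)=0$, and the one at $x=0$ carries the factor $x$, so it vanishes. This leaves
\[
-\mu_n^{-1}\int_0^1 f'(x)\,xJ_1(\mu_n x)\,dx .
\]

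Next I would alternate the two recursion identities, raising the Bessel index at each step. I write $x^{k}J_{k}$ as $\mu_n^{-1}$ times the derivative of $x^{k+1}J_{k+1}$, after first moving powers of $x$ onto $f^{(j)}$. The bookkeeping is easier if one writes $f^{(j)}(x)=x^{j}g_j(x)$ near $0$, which condition a) permits since $f$ vanishes to order $2s$ at $0$. Each step gains a factor $\mu_n^{-1}$ and trades one derivative of $f$. The boundary terms at $x=0$ vanish because of condition a), which makes $f^{(j)}(x)/x^{m}$ bounded for the relevant $m$. At $x=1$ the terms involve $f^{(j)}(1)$ multiplied by $J_{k}(\mu_n)$; they vanish for $j\le 2s-2$ by condition c). After $2s$ steps, hypothesis b) bounds the remaining integral by
\[
\sup|f^{(2s)}|\int_0^1 |J_{2s}(\mu_n x)|\,x\,dx .
\]
I would then use $|J_\nu(z)|\le Cz^{-1/2}$ for $z\ge 1$, and boundedness for small $z$, to get an $O(\mu_n^{-1/2})$ bound for this integral.

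Finally I would divide by $J_1^2(\mu_n)$. Since $J_0(\mu_n)=0$, the asymptotic form of $J_1$ at the zeros of $J_0$ gives $J_1^2(\mu_n)\sim 2/(\pi\mu_n)$, which contributes a factor $\mu_n$. Counting the gains, $\mu_n^{-2s}$ from the integrations, $\mu_n^{-1/2}$ from the Bessel bound and $\mu_n$ from the normalization, gives $\mu_n^{-(2s-1/2)}$ only if the integral step yields $\mu_n^{-1}$ rather than $\mu_n^{-1/2}$. I expect this exponent count to be the main obstacle. The plan is to recover the extra half power by a sharper integral estimate: substitute $z=\mu_n x$, so that $\int_0^1 x|J_{2s}(\mu_n x)|\,dx=\mu_n^{-2}\int_0^{\mu_n}z|J_{2s}(z)|\,dz$, and use the $z^{-1/2}$ decay of $J_{2s}$ to bound the right side by $O(\mu_n^{-1/2})$. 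If that still falls short, I would carry out the last integration by parts using the unused condition $f^{(2s-1)}(0)=0$ to gain the missing factor. I would also treat carefully the single uncancelled boundary term at $x=1$ involving $f^{(2s-1)}(1)$: it appears with a factor $J_{2s}(\mu_n)=O(\mu_n^{-1/2})$, and I would check that its contribution is also of order $\mu_n^{-(2s-1/2)}$.
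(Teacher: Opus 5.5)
The paper gives no proof of this statement and only refers to \cite{9}. The classical argument behind that citation uses the second-order Bessel operator $Lf=f''+f'/x$ rather than first-order recursions. From $(x\,\tfrac{d}{dx}J_0(\mu x))'=-\mu^2xJ_0(\mu x)$ and Green's identity one gets
\[
\int_0^1 xf(x)J_0(\mu_nx)\,dx=\frac{J_1(\mu_n)}{\mu_n}f(1)-\frac{1}{\mu_n^{2}}\int_0^1 x\,(Lf)(x)\,J_0(\mu_nx)\,dx ,
\]
where the $f'(1)$ boundary term drops out because it is multiplied by $J_0(\mu_n)=0$. Iterating $s$ times needs only the $s$ conditions $(L^jf)(1)=0$, $0\le j\le s-1$, which c) implies. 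Hypothesis a) makes $L^sf$ bounded, and the same two estimates you use, $\int_0^1x|J_0(\mu_nx)|\,dx=O(\mu_n^{-1/2})$ and $J_1^{-2}(\mu_n)=O(\mu_n)$, finish the proof.

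You instead integrate by parts $2s$ times using only the raising identity $x^{k+1}J_k(\mu x)=\mu^{-1}\bigl(x^{k+1}J_{k+1}(\mu x)\bigr)'$. Note it is $x^{k+1}J_k$, not $x^kJ_k$. The Bessel index therefore climbs to $J_{2s}$. This route is valid: with $D=x^{-1}\tfrac{d}{dx}$,
\[
\int_0^1 xfJ_0(\mu_nx)\,dx=\sum_{k=0}^{2s-1}\frac{(-1)^k}{\mu_n^{k+1}}(D^kf)(1)\,J_{k+1}(\mu_n)+\frac{1}{\mu_n^{2s}}\int_0^1x^{2s+1}(D^{2s}f)(x)\,J_{2s}(\mu_nx)\,dx .
\]
Here $(D^kf)(1)$ is a combination of $f(1),\dots,f^{(k)}(1)$, so c) kills every boundary term except $k=2s-1$. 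That term is $O(\mu_n^{-2s-1/2})$, as you expected. Hypothesis a) gives $|f^{(j)}(x)|\le Mx^{2s-j}$, hence $|x^{2s+1}D^{2s}f(x)|\le Cx$, so the remainder is also $O(\mu_n^{-2s-1/2})$.

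What the operator route buys is economy: it leaves no boundary term over and needs only $s$ linear conditions at $x=1$ rather than all of c). If you genuinely alternate your raising step with the lowering identity $\tfrac{d}{dx}J_0(\mu x)=-\mu J_1(\mu x)$, you reproduce the operator argument exactly.

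Your ``main obstacle'' does not exist; it is an arithmetic slip. You have $\mu_n^{-2s}\cdot\mu_n^{-1/2}\cdot\mu_n=\mu_n^{-(2s-1/2)}$ exactly. So the $O(\mu_n^{-1/2})$ bound on $\int_0^1x|J_{2s}(\mu_nx)|\,dx$ is precisely what is needed, and an $O(\mu_n^{-1})$ bound would give more than the theorem asserts.

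Drop the fallback steps, for three reasons:
\begin{itemize}
\item The rescaling $z=\mu_nx$ just reproduces the same $O(\mu_n^{-1/2})$.
\item No further integration by parts is available when $f$ has only $2s$ derivatives.
\item The condition $f^{(2s-1)}(0)=0$ is not unused. It is exactly what gives $|f^{(2s-1)}(x)|\le Mx$, and hence $|x^{2s+1}D^{2s}f|\le Cx$.
\end{itemize}
With the exponent count corrected, your argument is complete.
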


     The proof of Theorem 5.1 can be found in reference \cite{9}.
\begin{lemma}
    If $\left\{  \varphi(r),\psi(r)\right\} \in  C^{m+2}[0;1]$ and the conditions of Theorem 5.1 are satisfied, then the inequalities hold for the special cases of the series
    \[\sum\nolimits_{n=1}^{\infty} \mu_k^m |\varphi_k|,\sum\nolimits_{n=1}^{\infty} \mu_k^m |\psi_k|\].
 $$\sum_{n=1}^{\infty }\left| \varphi_{k} \right|\le \frac{C}{\mu_{k}^{2s-0,5}},\sum_{n=1}^{\infty }\left| \psi_{k} \right|\le \frac{C}{\mu_{k}^{2s-0,5}},$$
 $$\sum_{n=1}^{\infty }\mu_{k}^{2}\left| \varphi_{k} \right|\le \frac{C}{\mu_{k}^{2s-2,5}},\sum_{n=1}^{\infty }\mu_{k}^{2}\left| \psi_{k} \right|\le \frac{C}{\mu_{k}^{2s-2,5}},$$
    $$\sum_{n=1}^{\infty }\mu_{k}^{4}\left| \varphi_{k} \right|\le \frac{C}{\mu_{k}^{2s-4,5}},\sum_{n=1}^{\infty }\mu_{k}^{4}\left| \psi_{k} \right|\le \frac{C}{\mu_{k}^{2s-4,5},}$$
    $$\sum_{n=1}^{\infty }\mu_{k}^{6}\left| \varphi_{k} \right|\le \frac{C}{\mu_{k}^{2s-6,5}},\sum_{n=1}^{\infty }\mu_{k}^{6}\left| \psi_{k} \right|\le \frac{C}{\mu_{k}^{2s-6,5}},$$
$$\sum_{n=1}^{\infty }\mu_{k}^{8}\left| \varphi_{k} \right|\le \frac{C}{\mu_{k}^{2s-8,5}},\sum_{n=1}^{\infty }\mu_{k}^{8}\left| \psi_{k} \right|\le \frac{C}{\mu_{k}^{2s-8,5}}.$$

\end{lemma}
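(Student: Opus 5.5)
The plan is to read the lemma as a termwise statement, which is the only way it makes sense: the displayed right-hand sides depend on $k$. So I will prove that for each $m\in\{0,2,4,6,8\}$ the general term satisfies
$$\mu_k^m|\varphi_k|\le \frac{C}{\mu_k^{2s-m-\frac12}},\qquad \mu_k^m|\psi_k|\le \frac{C}{\mu_k^{2s-m-\frac12}}.$$
From this, the series $\sum_k \mu_k^m|\varphi_k|$ and $\sum_k \mu_k^m|\psi_k|$ converge as soon as $2s-m-\frac12>1$. That convergence is what the later uniform-convergence argument needs.

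The first step is to apply Theorem 5.1 directly to $\varphi$ and to $\psi$. The hypothesis $\{\varphi,\psi\}\subset C^{m+2}[0,1]$, together with the boundary conditions a) and c) of Theorem 5.1 (assumed in the statement), lets me take $2s\le m+2$. Then $\varphi^{(2s)}$ and $\psi^{(2s)}$ are continuous on $[0,1]$, so condition b) holds. Theorem 5.1 then gives $|\varphi_k|\le C\mu_k^{-(2s-\frac12)}$ and $|\psi_k|\le C\mu_k^{-(2s-\frac12)}$. Here the "$\lambda$" of Theorem 5.1 is identified with the $k$-th zero $\mu_k$ of $J_0$. A minor point to check is the normalization: the paper writes $\varphi_k$ with $J_1(\mu_k)$ in the denominator rather than $J_1^2(\mu_k)$. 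Since $|J_1(\mu_k)|\sim\sqrt{2/(\pi\mu_k)}$, this changes the bound only by a factor $\mu_k^{\pm1/2}$. I would fix the convention $J_1^2(\mu_k)$, consistent with $f_k$, so that Theorem 5.1 applies verbatim.

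The second step is to multiply these bounds by $\mu_k^m$ for $m=0,2,4,6,8$. This gives exactly the displayed exponents $2s-m-\frac12$, that is, $2s-0.5$, $2s-2.5$, $2s-4.5$, $2s-6.5$, $2s-8.5$. Since $\mu_k=k\pi-\frac{\pi}{4}\ge \frac{3\pi}{4}k$, comparison with $\sum_k k^{-p}$ for $p=2s-m-\frac12>1$ yields summability. This requires $2s>m+\frac32$. For the largest case $m=8$ that means $2s\ge 10$, i.e.\ $s\ge5$, so $\varphi,\psi\in C^{10}$ with the corresponding vanishing derivatives at $0$ and $1$.

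The main obstacle is making the smoothness hypotheses consistent. Theorem 5.1 needs $2s$ derivatives, and convergence of $\sum\mu_k^m|\varphi_k|$ needs $2s>m+\frac32$. The stated assumption $C^{m+2}$ gives only $2s\le m+2$, which suffices only if $2s=m+2$ (so $p=\frac32>1$); this works because $m$ is even. I would therefore set $2s=m+2$ explicitly in each case and verify $p=\frac32$, which closes the argument. The boundary vanishing conditions of Theorem 5.1 are assumed as part of the hypotheses and need no proof.
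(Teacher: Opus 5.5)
Your proposal is correct and is essentially the paper's argument: apply Theorem 5.1 termwise to get $|\varphi_k|,|\psi_k|\le C\mu_k^{-(2s-1/2)}$, multiply by $\mu_k^m$, and compare with $\sum_k \mu_k^{-p}$, $p>1$, using $\mu_k\sim k$, which forces $2s\ge 10$ for $m=8$. The only difference is bookkeeping. The paper fixes $2s=10$ for every $m$, which is what its later estimates need: all five majorants are used together with one $s$, matching the $C^{10}$ hypothesis of the main theorem. Your per-case choice $2s=m+2$ instead follows the $C^{m+2}$ hypothesis literally, and for $m=0$ it gives $s=1$, just outside Theorem 5.1 as quoted ($s>1$); the uniform choice $2s=10$ avoids this.
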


\begin{proof}
    It is known from the asymptotic of $\mu_{k}=(k-\frac{1}{4}) \pi$ $(\mu_{k} \sim k)$. Based on this, the series $\sum_{k=1}^{\infty}\frac{C}{\mu_{k}^p}$ converges when $p>1$. In our case, for the convergence of the highest-order ($m=8$) series:
$$2s-8,5>1 \implies 2s>9,5$$
    That is, if the function is differentiable at least 10 times ($2s=10$), then convergent majorant series exist for all the series listed above:
    $$\mu_{k}^{m}\left| \varphi_{k} \right| \le \frac{C}{\mu_{k}^{10-m-0,5}},\mu_{k}^{m}\left| \psi_{k} \right| \le \frac{C}{\mu_{k}^{10-m-0,5}}$$
    For each $m \in \{0, 2, 4, 6, 8\}$, the exponent is greater than 1 ($p \ge 1.5$). By the Weierstrass M-test, all these series converge absolutely and uniformly on the interval $[0, 1]$.
\end{proof}

In order to prove a uniform convergence of infinite series \eqref{9} let us start with the following estimation

$$\left| u(t,r) \right|\le \sum_{k=1}^{\infty}\left| {}_{1}U_{k}(t)J_{0}(\lambda_{k}r) \right|\le \sum_{k=1}^{\infty}\left| {}_{1}U_{k}(t)\right|,\,\,\,\,\,\, (\left| J_{0}(x) \le 1 \right|, \forall x \in \mathbb{R})
$$
To complete this estimation, we need to estimate $\left| {}_{1}U_{k}(t)\right|$. For this aim, first using \eqref{15} we will obtain
\begin{align*} \label{51}
 &\left| {}_{1}U_{k} (t) \right|\le  \left| {}_{2} A_{k} \right| t+\mu_{k}^{2}\left| \varphi_{k} \right| t^{\beta _{1} }  \Gamma\left(\gamma_{1} \right)  
 \left| E_{2} \left(\left. \begin{array}{l} {\gamma_{1} ,\gamma_{1} ,1;1,0} \\ {\beta _{1} +1,\beta _{1} ,\alpha_{1} ;\gamma_{1} ,\gamma_{1} ;1,1} \end{array}\right|\begin{array}{c} {-\mu_{k}^2 t^{\beta _{1} } } \\ {\delta t^{\alpha_{1} } } \end{array}\right) \right|+\\&+\left| \varphi_{k} \right|
+\mu_{k}^2\left| {}_{2} A_{k} \right|   t^{\beta _{1} +1} \Gamma\left(\gamma_{1} \right) \left| E_{2} \left(\left. \begin{array}{l} {\gamma_{1} ,\gamma_{1} ,1;1,0} \\ {\beta _{1} +2,\beta _{1} ,\alpha_{1} ;\gamma_{1} ,\gamma_{1} ;1,1} \end{array}\right|\begin{array}{c} {-\mu_{k}^2 t^{\beta _{1} } } \\ {\delta t^{\alpha_{1} } } \end{array}\right) \right|+\\
&+\Gamma\left(\gamma_{1} \right)  \left| f_{k} \right| t^{\beta_{1}}\left| E_{2} \left(\left. \begin{array}{l} {\gamma_{1} ,\gamma_{1} ,1;1,0} \\ {\beta _{1} +1,\beta _{1} ,\alpha_{1} ;\gamma_{1} ,\gamma_{1} ;1,1} \end{array}\right|\begin{array}{c} {-\mu_{k}^2t^{\beta _{1} } } \\ {\delta t^{\alpha_{1} } } \end{array}\right) \right|.  
\end{align*}
Now using \eqref{46},\eqref{47} we get
\begin{align}
&\left| {}_{1}U_{k} (t) \right|\le  \left| {}_{2} A_{k} \right| t+\mu^{2}_{k}\left| \varphi_{k} \right| t^{\beta _{1} }  \Gamma\left(\gamma_{1} \right)  
 C_{2}+\mu^{2}_{k}\left| {}_{2} A_{k} \right|  t^{\beta _{1} +1}  \Gamma\left(\gamma_{1} \right) C_{3}+\left| \varphi_{k} \right|+\Gamma\left(\gamma_{1} \right)  \left| f_{k} \right| t^{\beta_{1}}C_{2}.
\end{align}
    Based on \eqref{40} and \eqref{45}-\eqref{47}, considering $\frac{1}{\left|\widetilde{\Delta}_{k}\right|}<\varepsilon$ ($\varepsilon$ is a positive number), we deduce
\begin{equation} \label{52}
    \begin{split}
  &\left| {}_{2}A_{k} \right|\le \left| \psi_{k} \right|\varepsilon+\left| \varphi_{k} \right|\varepsilon+\left| \psi_{k}  \right|\varepsilon T_{1}^{\beta_{1}-1} \Gamma(\gamma_{1})C_{1}  +\left| \varphi_{k} \right|\varepsilon T_{1}^{\beta_{1}-1} \Gamma(\gamma_{1})C_{1}+2\mu^{2}_{k}\left| \varphi_{k} \right|\Gamma\left(\gamma_{2} \right)
 \left(\xi-T_{1} \right)^{\beta _{2}} \varepsilon C_{4}+\\
&+\mu^{2}_{k} \left| \varphi_{k} \right|\Gamma(\gamma_{1})T_{1}^{\beta_{1}}\varepsilon C_{2}+2\mu^{4}_{k} \left| \varphi_{k} \right|\Gamma\left(\gamma_{1} \right)\Gamma\left(\gamma_{2} \right)\left(\xi-T_{1} \right)^{\beta _{2}}T_{1}^{\beta _{1} } \varepsilon C_{2}C_{4}+\mu^{2}_{k}\left| \psi_{k} \right|T_{1}^{\beta _{1} }\Gamma(\gamma_{1})\varepsilon C_{2}.
    \end{split}
\end{equation}
Let us introduce the following notations:
\begin{align*}
   & S_{1}=\varepsilon+\varepsilon \Gamma(\gamma_{1})T_{1}^{\beta_{1}-1}C_{1}, \\
& S_{2}=2\Gamma(\gamma_{2})\left( \xi-T_{1} \right)^{\beta_{2}}\varepsilon C_{4},\\
& S_{3}=2\Gamma(\gamma_{1})\Gamma(\gamma_{2})T_{1}^{\beta_{1}}\left( \xi-T_{1} \right)^{\beta_{2}}\varepsilon C_{2} C_{4},\\
& S_{4}=T_{1}^{\beta_{1}}\varepsilon \Gamma(\gamma_{1})C_{2}.
\end{align*}
Using the above notations, we rewrite \eqref{52}
\begin{equation} \label{53}
    \left| {}_{2}A_{k} \right|\le \left| \varphi_{k} \right| S_{1}+\mu^{2}_{k}\left| \varphi_{k} \right| \left( S_{2}+S_4 \right)+\left| \psi_{k} \right|S_{1}+\mu^{2}_{k}\left| \psi_{k} \right|S_{4}+\mu^{4}_{k}\left| \varphi_{k} \right|S_{3}. 
\end{equation}
Similarly, according to \eqref{42} and \eqref{45}-\eqref{48}, we obtain the estimate of $f_{k}$ as follows:
\begin{align} \label{54}
&\left| f_{k} \right|\le \left| \psi_{k} \right|\varepsilon+\left| \varphi_{k} \right|\varepsilon+2\mu_{k}^2 \left| \varphi_{k} \right|\Gamma(\gamma_{1})T_{1}^{\beta_{1}}\varepsilon C_{2}+\mu_{k}^4  \left| \varphi_{k} \right|\Gamma^{2}\left(\gamma_{1} \right)T_{1}^{2\beta _{1} }\varepsilon C_{2}^{2} + \mu_{k}^2\left| \varphi_{k} \right|\Gamma(\gamma_{2})T_{1}^{\beta_{1}}\left(\xi-T_{1} \right)^{\beta _{2}}\times\nonumber\\
&\times \varepsilon C_{4}+2\mu_{k}^4 \left| \varphi_{k} \right|\Gamma\left(\gamma_{1} \right)\Gamma\left(\gamma_{2} \right)\left(\xi-T_{1} \right)^{\beta _{2}}T_{1}^{\beta _{1} }\varepsilon C_{2}C_{4}+\mu_{k}^6 \left| \varphi_{k} \right|\Gamma^{2}\left(\gamma_{1} \right)\Gamma\left(\gamma_{2} \right)\left(\xi-T_{1} \right)^{\beta _{2}}T_{1}^{2\beta _{1} }\varepsilon C_{2}^{2}C_{4}+\nonumber\\
&+\mu_{k}^2 \left| \psi_{k} \right|\varepsilon T_{1}+ \mu_{k}^4 \left| \psi_{k} \right| T_{1}^{\beta _{1}+1 } \varepsilon C_{3}+\mu_{k}^2 \left| \psi_{k} \right|\Gamma(\gamma_{1})T_{1}^{\beta_{1}}\varepsilon C_{2}+\mu_{k}^2\left| \varphi_{k}
 \right|\Gamma(\gamma_{1})T_{1}^{\beta_{1}}\varepsilon C_{1}+\mu_{k}^4 \left| \varphi_{k} \right|\Gamma^{2}(\gamma_{1})T_{1}^{2\beta_{1}}\times\nonumber\\
& \times\varepsilon C_{1}C_{3}+\mu_{k}^4\left| \varphi_{k} \right|\Gamma\left(\gamma_{1} \right)\Gamma\left(\gamma_{2} \right)\left(\xi-T_{1} \right)^{\beta _{2}}T_{1}^{\beta_{1}}\varepsilon C_{1}C_{4}+\mu_{k}^6  \left| \varphi_{k} \right| \Gamma^{2}\left(\gamma_{1} \right)\Gamma\left(\gamma_{2} \right)T_{1}^{2\beta_{1}}\varepsilon C_{1}C_{3}C_{4}.
\end{align}

Let us rewrite \eqref{54} by introducing the following notations:
\begin{align} \label{55}
&\left| f_{k} \right|\le \left| \varphi_{k} \right|\varepsilon+\mu_{k}^2 \left| \varphi_{k} \right|S_{5}+\mu_{k}^4 \left| \varphi_{k} \right|\left( S_{3}+S_{6} \right)+\mu_{k}^6\left| \varphi_{k} \right|S_{7} + \left| \psi_{k}  \right|\varepsilon+\mu_{k}^2\left| \psi_{k}  \right|\left( \varepsilon T_{1}+S_{4} \right).
\end{align}
where,
\begin{align*}
    & S_{5}=2T_{1}^{\beta_{1}} \Gamma(\gamma_{1})\varepsilon C_{2}+\Gamma(\gamma_{2})\left(\xi-T_{1} \right)^{\beta _{2}} \varepsilon C_{4}+T_{1}^{\beta_{1}} \Gamma(\gamma_{1})\varepsilon C_{1}, \\
& S_{6}=T_{1}^{2\beta_{1}} \Gamma^{2}(\gamma_{1})\varepsilon C_{2}^{2}+T_{1}^{2\beta_{1}} \Gamma^{2}(\gamma_{1})\varepsilon C_{1}C_{3}+\Gamma(\gamma_{1})\Gamma(\gamma_{2})\left(\xi-T_{1} \right)^{\beta _{2}}T_{1}^{\beta_{1}} \varepsilon C_{1} C_{4},\\
& S_{7}=\Gamma^{2}(\gamma_{1})\Gamma(\gamma_{2})\left(\xi-T_{1} \right)^{\beta _{2}}T_{1}^{2\beta_{1}}\varepsilon C_{2}^{2}C_{4}+\Gamma^{2}(\gamma_{1})\Gamma(\gamma_{2})T_{1}^{2\beta_{1}}\varepsilon C_{1}C_{3}C_{4}
\end{align*}
Substituting \eqref{53} and \eqref{55} into \eqref{51}, we obtain the estimate ${}_{1}U_{k} (t) $ taking the following form
\begin{align*}
   &\left| {}_{1}U_{k} (t) \right|\le  \left| \varphi_{{k}} \right|+\left| \varphi_{{k}} \right|tS_{1}+\mu_{k}^2\left| \varphi_{k} \right| t  
 \left( S_{2}+S_{4} \right)+\mu_{k}^4\left| \varphi_{k} \right| t S_{3}+\left| \psi_{k} \right|t S_{1}+\mu_{k}^2\left| \psi_{k} \right|t S_{4}+\mu_{k}^2\left| \varphi_{k} \right|t^{\beta_{1}}\times\\&\times\Gamma(\gamma_{1})C_2+\mu_{k}^2\left| \varphi_{k} \right|t^{\beta_{1}+1}\Gamma(\gamma_{1})C_3S_1+\mu_{k}^4\left| \varphi_{k} \right|  t^{\beta_{1}+1}\Gamma(\gamma_{1})C_3\left( S_2+S_{4} \right)+\mu_{k}^6\left| \varphi_{k} \right| t^{\beta_{1}+1}\Gamma(\gamma_{1})C_3 S_{3}+\\
&+ \mu_{k}^2\left| \psi_{k} \right| t^{\beta_{1}+1}\Gamma(\gamma_{1})C_3S_1+\mu_{k}^4\left| \psi_{k} \right|  t^{\beta_{1}+1}\Gamma(\gamma_{1})C_3 S_{4}+\left| \varphi_{{k}} \right|\Gamma(\gamma_{1})t^{\beta_{1}}\varepsilon C_2+\mu_{k}^2\left| \varphi_{{k}} \right|\Gamma(\gamma_{1})t^{\beta_{1}} S_{5}C_2+\\&+\mu_{k}^4\left| \varphi_{k} \right|  t^{\beta_{1}}\Gamma(\gamma_{1})C_2\left( S_3+S_{6} \right)+\mu_{k}^6 \left| \varphi_{k} \right| t^{\beta_{1}}\Gamma(\gamma_{1})C_2 S_{7}+\left| \psi_{{k}} \right|\Gamma(\gamma_{1})t^{\beta_{1}}\varepsilon C_2+\mu_{k}^2\left| \psi_{k} \right|t^{\beta_{1}}\Gamma(\gamma_{1}) C_2\times\\
&\times\left( \varepsilon T_{1}+S_{4} \right).
\end{align*}
    After simplification, we have the following
\begin{align}
  &\sum_{k=1}^{\infty}\left| {}_{1}U_{k} (t) \right|\le  \sum_{k=1}^{\infty}\left| \varphi_{{k}} \right|\Big(1+tS_{1}+\Gamma(\gamma_{1})t^{\beta_{1}}\varepsilon C_2\Big)+\sum_{k=1}^{\infty}\mu_{k}^2\left| \varphi_{k} \right| \Big( t  
 \left( S_{2}+S_{4} \right)+\Gamma(\gamma_{1}) t^{\beta_{1}} C_2+\Gamma(\gamma_{1})\times\nonumber\\& \times t^{\beta_{1}+1} C_1C_3+\Gamma(\gamma_{1}) t^{\beta_{1}} C_2S_{5}\Big)+\sum_{k=1}^{\infty}\mu_{k}^4\left| \varphi_{k} \right| \Big(t S_{3}+ t^{\beta_{1}+1} \Gamma(\gamma_{1})C_3\left( S_2+S_{4} \right)+\Gamma(\gamma_{1}) t^{\beta_{1}}C_{2}\left( S_{3}+S_{6} \right)\Big)+\nonumber\\&+\sum_{k=1}^{\infty}\mu_{k}^6\left| \varphi_{k}  \right|\Big(\Gamma(\gamma_{1}) t^{\beta_{1}+1} S_3C_{3}+\Gamma(\gamma_{1}) t^{\beta_{1}} S_7C_{2}\Big)+\sum_{k=1}^{\infty}\left| \psi_{k} \right|\Big(tS_{1}+\Gamma(\gamma_{1}) t^{\beta_{1}} \varepsilon C_{2}\Big)+\sum_{k=1}^{\infty}\mu_{k}^2\left| \psi_{k} \right|\times\nonumber\\& \times\Big(tS_{4}+ t^{\beta_{1}+1} \Gamma(\gamma_{1})S_1C_{3}+\Gamma(\gamma_{1}) t^{\beta_{1}}  C_{2}(\varepsilon T_{1}+S_4)\Big)+\sum_{k=1}^{\infty}\mu_{k}^4\left| \psi_{k} \right|\Gamma(\gamma_{1}) t^{\beta_{1}+1} S_4C_{3}.
\end{align}
From the estimates in Lemma 5.1 , we obtain the following estimate for ${}_{1}U_{k} (t) $
\begin{align*}
    &\sum_{k=1}^{\infty}\left| {}_{1}U_{k} (t) \right|\le  \sum_{k=1}^{\infty}\frac{C}{\mu_{k}^{2s-0,5}}\Big(1+2tS_{1}+2\Gamma(\gamma_{1})t^{\beta_{1}}\varepsilon C_2\Big)+\sum_{k=1}^{\infty}\frac{C}{\mu_{k}^{2s-2,5}} \Big( t  
 \left( S_{2}+2S_{4} \right)+\Gamma(\gamma_{1}) t^{\beta_{1}} C_2+\nonumber\\
&+\Gamma(\gamma_{1})t^{\beta_{1}+1} \left( C_1+S_{1} \right)C_3+\Gamma(\gamma_{1}) t^{\beta_{1}}  C_{2}(\varepsilon T_{1}+S_4+S_5)\Big)+\sum_{k=1}^{\infty}\frac{C}{\mu_{k}^{2s-4,5}} \Big(t S_{3}+ t^{\beta_{1}+1} \Gamma(\gamma_{1})C_3\times\\
&\times\left( S_2+2S_{4} \right)
+\Gamma(\gamma_{1}) t^{\beta_{1}}C_{2}\left( S_{3}+S_{6} \right)\Big)+\sum_{k=1}^{\infty}\frac{C}{\mu_{k}^{2s-6,5}}\Big(\Gamma(\gamma_{1}) t^{\beta_{1}+1} S_3C_{3}+\Gamma(\gamma_{1}) t^{\beta_{1}} S_7C_{2}\Big).
\end{align*}

Now we will demonstrate the convergence of the solution ${}_{2}U_{k} (t) $
$$\left| u(t,r) \right|\le \sum_{k=1}^{\infty}\left| {}_{2}U_{k}(t)J_{0}(\lambda_{k}r) \right|\le \sum_{k=1}^{\infty}\left| {}_{2}U_{k}(t)\right|,\,\,\,\,(\left| J_{0}(x) \le 1 \right|, \forall x \in \mathbb{R})$$
\begin{align} \label{57}
 &\left| {}_{2}U_{k} \left(t\right) \right| \le  \left| {}_{3} A_{k} \right| +\mu_{k}^2\left| {}_{3} A_{k} \right| \left(t-T_{1} \right)^{\beta _{2} } \Gamma\left(\gamma_{2} \right)  
\left| E_{2} \left(\left. \begin{array}{l} {\gamma_{2} ,\gamma_{2} ,1;1,0} \\ {\beta _{2} +1,\beta _{2} ,\alpha _{2};\gamma_{2} ,\gamma_{2} ;1,1} \end{array}\right|\begin{array}{c} {-\mu_{k}^2 \left(t-T_{1} \right)^{\beta _{2} } } \\ {\delta \left(t-T_{1} \right)^{\alpha_{2} } } \end{array}\right) \right|+\nonumber\\
& +\Gamma\left(\gamma_{2} \right) \left| f_{k} \right|
 \left(t-T_{1} \right)^{\beta _{2}} \left| E_{2} \left(\left. \begin{array}{l} {\gamma_{2} ,\gamma_{2} ,1;1,0} \\ {\beta _{2} +1,\beta _{2} ,\alpha_{2} ;\gamma_{2} ,\gamma_{2} ;1,1} \end{array}\right|\begin{array}{c} {-\mu_{k}^2 \left(t-T_{1} \right)^{\beta _{2} } } \\ {\delta \left(t-T_{1} \right)^{\alpha_{2} } } \end{array}\right) \right|.    
\end{align}
Using the estimate \eqref{48}, we rewrite \eqref{57} as follows.
\begin{equation} \label{58}
    \left| {}_{2}U_{k} \left(t\right) \right| \le  \left| {}_{3} A_{k} \right| +\mu_{k}^2\left| {}_{3} A_{k} \right| \Gamma\left(\gamma_{2} \right)  
\left(t-T_{1} \right)^{\beta _{2} }C_{4}  +\Gamma\left(\gamma_{2} \right) \left| f_{k} \right|
 \left(t-T_{1} \right)^{\beta _{2}}C_{4}.
\end{equation}
To demonstrate the uniform convergence of the solution ${}_{2}U_{k} \left(t\right)$ , we need to find an estimate of ${}_{3} A_{k}$, which we determine as follows. For this, we use estimates \eqref{45}-\eqref{48} and $\frac{1}{\left| \widetilde{\Delta}_{k} \right|}< \varepsilon $
\begin{align*}
    &\left| {}_{3}A_{k} \right|\le \left| \psi_{k}  \right|\varepsilon T_{1}^{\beta_{1}} \Gamma(\gamma_{1})\varepsilon C_{2}+\mu_{k}^2\left| \psi_{k}  \right|\varepsilon T_{1}^{2\beta_{1}} \Gamma^{2}(\gamma_{1})  C_{2}^{2}+\mu_{k}^2 \left| \varphi_{k} \right|\Gamma\left(\gamma_{1} \right)\Gamma\left(\gamma_{2} \right) \left(\xi-T_{1} \right)^{\beta _{2}}T_{1}^{\beta _{1} } C_{1}C_{4} \varepsilon+\\
&+\mu_{k}^4\left| \varphi_{k} \right|\Gamma^{2}\left(\gamma_{1} \right)\Gamma\left(\gamma_{2} \right)\left(\xi-T_{1} \right)^{\beta _{2}}T_{1}^{2\beta _{1} } C_{1}C_{3}C_{4} \varepsilon+\left| \varphi_{k} \right|\Gamma\left(\gamma_{2} \right)C_{4}\varepsilon\left(\xi-T_{1} \right)^{\beta _{2}}+2\mu_{k}^2\left| \varphi_{k} \right|\Gamma\left(\gamma_{1} \right)\Gamma\left(\gamma_{2} \right)\times\\
& \times T_{1}^{\beta _{1} }\left(\xi-T_{1} \right)^{\beta _{2}}C_{2}C_{4} \varepsilon+\mu_{k}^4 \left| \varphi_{k} \right|\Gamma^{2}\left(\gamma_{1} \right)\Gamma\left(\gamma_{2} \right)\left(\xi-T_{1} \right)^{\beta _{2}}T_{1}^{2\beta _{1} } C_{2}^{2}C_{4} \varepsilon+\left| \psi_{k}  \right|\varepsilon T_{1}^{\beta_{1}} \Gamma(\gamma_{1})\varepsilon C_{1}+\mu_{k}^2\left| \psi_{k}  \right|\varepsilon \times\\& \times T_{1}^{2\beta_{1}} \Gamma^{2}(\gamma_{1})  C_{1}C_{3}+\left| \psi_{k}  \right|\varepsilon T_{1}+\mu_{k}^2\left| \psi_{k}  \right|\varepsilon T_{1}^{\beta_{1}+1} \Gamma(\gamma_{1})C_{3}. 
\end{align*}
As a result of the simplifications, we obtain the following
\begin{align} \label{59}
 &\left| {}_{3}A_{k} \right|\le \left| \varphi_{k} \right|\Gamma\left(\gamma_{2} \right)\left(\xi-T_{1} \right)^{\beta _{2}}\varepsilon C_{4}+\mu_{k}^2 \left| \varphi_{k} \right|\Bigg(\Gamma\left(\gamma_{1} \right)\Gamma\left(\gamma_{2} \right)\left(\xi-T_{1} \right)^{\beta _{2}}T_{1}^{\beta _{1} } C_{1}C_{4} \varepsilon+2\Gamma\left(\gamma_{1} \right)\Gamma\left(\gamma_{2} \right)\times\nonumber\\
&\times\left(\xi-T_{1} \right)^{\beta _{2}}T_{1}^{\beta _{1} } C_{2}C_{4} \varepsilon \Bigg)+\mu_{k}^4\left| \varphi_{k} \right|\Bigg(\Gamma^{2}\left(\gamma_{1} \right)\Gamma\left(\gamma_{2} \right) \left(\xi-T_{1} \right)^{\beta _{2}}T_{1}^{2\beta _{1} } C_{1}C_{3}C_{4} \varepsilon +\Gamma^{2}\left(\gamma_{1} \right)\left(\xi-T_{1} \right)^{\beta _{2}}\times\nonumber\\
&\times \Gamma\left(\gamma_{2} \right)T_{1}^{2\beta _{1} }\varepsilon C_{2}^{2}C_{4}\Bigg)+\left| \psi_{k} \right|\Bigg( T_{1}^{\beta _{1} } \Gamma\left(\gamma_{1} \right) C_{2}\varepsilon+T_{1}^{\beta _{1} } \Gamma\left(\gamma_{1} \right) C_{1}\varepsilon+T_{1} \varepsilon \Bigg)+ \mu_{k}^2 \left| \psi_{k} \right| \Bigg(T_{1}^{2\beta _{1} } \Gamma^{2}\left(\gamma_{1} \right) C_{2}^{2}\varepsilon+\nonumber\\
&\times T_{1}^{2\beta _{1} } \Gamma^{2}\left(\gamma_{1} \right) C_{1}C_{3}\varepsilon+T_{1}^{\beta _{1}+1 } \Gamma\left(\gamma_{1} \right)C_{3}\varepsilon \Bigg) .  
\end{align}
We will introduce the following notations:
\begin{align*}
    & S_{8}=\Gamma^{2}\left(\gamma_{1} \right)\Gamma\left(\gamma_{2} \right)\left(\xi-T_{1} \right)^{\beta _{2}}T_{1}^{2\beta _{1} } C_{4} \varepsilon \Big(C_{1}C_{3}+C_{2}^{2}\Big), \\
& S_{9}=T_{1}^{2\beta _{1} } \Gamma^{2}\left(\gamma_{1} \right) C_{2}^{2}\varepsilon+T_{1}^{2\beta _{1} } \Gamma^{2}\left(\gamma_{1} \right)C_{1} C_{3}\varepsilon+T_{1}^{\beta _{1}+1 } \Gamma\left(\gamma_{1} \right) C_{3}\varepsilon.
\end{align*}
Based on these notations, we will rewrite \eqref{59}
\begin{align} \label{60}
  &\left| {}_{3}A_{k} \right|\le \left| \varphi_{k} \right|\frac{S_{2}}{2}+\mu_{k}^2 \left| \varphi_{k} \right|\frac{S_{3}(C_1+2C_2)}{2C_{2}}+\mu_{k}^4 \left| \varphi_{k} \right|S_{8}+\left| \psi_{k} \right|(S_{4}+S_{1}T_{1})+\mu_{k}^2\left| \psi_{k} \right|S_{9}.   
\end{align}
We substitute \eqref{60} into \eqref{58} and obtain the following estimate
\begin{align*}
     &\sum_{k=1}^{\infty}\left| {}_{2}U_{k} (t) \right| \le \sum_{k=1}^{\infty}\left| \varphi_{k} \right| \Bigg(\frac{S_{2}}{2}+\Gamma\left(\gamma_{2} \right)\left(t-T_{1} \right)^{\beta _{2}} \varepsilon C_{4}\Bigg)+\sum_{k=1}^{\infty}\mu_{k}^2 \left| \varphi_{k} \right| \Bigg(\frac{S_{3}(C_1+2C_2)}{2C_{2}}+\left(t-T_{1} \right)^{\beta _{2}}\times\nonumber\\
&\times\Gamma\left(\gamma_{2} \right)\frac{S_{2}C_{4}}{2}+\Gamma\left(\gamma_{2} \right)\left(t-T_{1} \right)^{\beta _{2}}C_{4}S_{5}\Bigg)+\sum_{k=1}^{\infty}\mu_{k}^4\left| \varphi_{k} \right|\Bigg(S_{8}+\Gamma\left(\gamma_{2} \right)\left(t-T_{1} \right)^{\beta _{2}}C_{4}\frac{S_{3}(C_1+2C_2)}{2C_{2}}+\nonumber\\
&+\Gamma\left(\gamma_{2} \right)\left(t-T_{1} \right)^{\beta_{2}}C_{4}\Big(S_{3}+S_{6}\Big)\Bigg)+\sum_{k=1}^{\infty}\mu_{k}^6\left| \varphi_{k} \right|\Bigg(\Gamma\left(\gamma_{2} \right)\left(t-T_{1} \right)^{\beta _{2}}S_{8}C_{4}+\Gamma\left(\gamma_{2} \right)\left(t-T_{1} \right)^{\beta _{2}}S_{7}C_{4}\Bigg)+\nonumber\\&+\sum_{k=1}^{\infty}\left| \psi_{k} \right|\Bigg((S_{4}+S_{1}T_{1})+\Gamma\left(\gamma_{2} \right)\left(t-T_{1} \right)^{\beta _{2}} \varepsilon C_{4} \Bigg)+\sum_{k=1}^{\infty}\mu_{k}^2\left| \psi_{k} \right|\Bigg(S_{9}+\Gamma\left(\gamma_{2} \right)\left(t-T_{1} \right)^{\beta _{2}} C_{4}(S_{4}+S_{1}T_{1})+\nonumber\\
&+\Gamma\left(\gamma_{2} \right)\left(t-T_{1} \right)^{\beta _{2}} C_{4} \Big(\varepsilon T_{1}+S_{4}\Big)\Bigg)+\sum_{k=1}^{\infty}\mu_{k}^4\left| \psi_{k} \right|\Gamma\left(\gamma_{2} \right)\left(t-T_{1} \right)^{\beta _{2}} C_{4}S_{9}.
\end{align*}
From the estimates in Lemma 5.1 , we obtain the following estimate for ${}_{2}U_{k} (t) $
\begin{align*}
&\sum_{k=1}^{\infty}\left| {}_{2}U_{k} (t) \right| \le \sum_{k=1}^{\infty}\frac{C}{\mu_{k}^{2s-0,5}} \Bigg(\frac{S_{2}}{2}+2\Gamma\left(\gamma_{2} \right)\left(t-T_{1} \right)^{\beta _{2}} \varepsilon C_{4}(S_{4}+S_{1}T_{1})\Bigg)+\sum_{k=1}^{\infty}\frac{C}{\mu_{k}^{2s-2,5}} \Bigg(\frac{S_{3}(C_1+2C_2)}{2C_{2}}+\\
&+S_{9}+\Gamma\left(\gamma_{2} \right)\left(t-T_{1} \right)^{\beta _{2}} C_{4}(2S_{4}+S_{1}T_{1}+\frac{S_{2}}{2}+\varepsilon T_{1}+S_{5})\Bigg)+\sum_{k=1}^{\infty}\frac{C}{\mu_{k}^{2s-4,5}}\Bigg(S_{8}+\Gamma\left(\gamma_{2} \right)\left(t-T_{1} \right)^{\beta _{2}}C_{4}\times \\
&\times\Bigg(\frac{S_{3}(C_1+2C_2)}{2C_{2}}+S_{3}+S_{6}+S_{9}\Bigg)\Bigg)+\sum_{k=1}^{\infty}\frac{C}{\mu_{k}^{2s-6,5}}\Bigg(\Gamma\left(\gamma_{2} \right)\left(t-T_{1} \right)^{\beta _{2}}S_{8}C_{4}+\Gamma\left(\gamma_{2} \right)\left(t-T_{1} \right)^{\beta _{2}}S_{7}C_{4}\Bigg).
\end{align*}

Now we will demonstrate that a solution ${}_{3}U_{k} (t)$, similar to the one above, converges uniformly.
$$\left| u(t,r) \right|\le \sum_{k=1}^{\infty}\left| {}_{3}U_{k}(t)J_{0}(\lambda_{k}r) \right|\le \sum_{k=1}^{\infty}\left| {}_{3}U_{k}(t)\right|, \,\,\,\,(\left| J_{0}(x) \le 1 \right|, \forall x \in \mathbb{R})$$
\begin{align*}
    &\left| {}_{3}U_{k} (t) \right| \le \left| {}_{4} A_{k} \right| +\left| {}_{5} A_{k} \right| \left(t-T_{2} \right)+\mu_{k}^2\left| {}_{4} A_{k} \right|\Gamma\left(\gamma_{3} \right)    
 \left| E_{2} \left(\left. \begin{array}{l} {\gamma_{3} ,\gamma_{3} ,1;1,0} \\ {\beta _{3} +1,\beta _{3} ,\alpha_{3} ;\gamma_{3} ,\gamma_{3} ;1,1} \end{array}\right|\begin{array}{c} -\mu_{k}^2\left(t-T_{2} \right)^{\beta _{3}  } \\ {\delta \left(t-T_{2} \right)^{\alpha_{3} } } \end{array}\right) \right|\times\nonumber\\
&\times    \left(t-T_{2} \right)^{\beta _{3} }  + \mu_{k}^2\left| {}_{5} A_{k} \right|   \left(t-T_{2} \right)^{\beta _{3}+1 }\Gamma (\gamma_{3})\left| E_{2} \left(\left. \begin{array}{l} {\gamma_{3} ,\gamma_{3} ,1;1,0} \\ {\beta _{3} +2,\beta _{3} ,\alpha_{3};\gamma_{3} ,\gamma_{3};1,1} \end{array}\right|\begin{array}{c} -\mu_{k}^2 \left(t-T_{2} \right)^{\beta _{3}  } \\ {\delta \left(t-T_{2} \right)^{\alpha_{3} } } \end{array}\right) \right|\times\nonumber\\
& +\Gamma (\gamma_{3})\left(t-T_{2} \right)^{\beta _{3} } \left| f_{k } \right|\left| E_{2} \left(\left. \begin{array}{l} {\gamma_{3} ,\gamma_{3} ,1;1,0} \\ {\beta _{3} +1,\beta _{3} ,\alpha_{3} ;\gamma_{3} ,\gamma_{3} ;1,1} \end{array}\right|\begin{array}{c} -\mu_{k}^2\left(t-T_{2} \right)^{\beta _{3}  } \\ {\delta \left(t-T_{2} \right)^{\alpha_{3} } } \end{array}\right) \right|.
\end{align*}
Based on estimates \eqref{45} through \eqref{40}, we can write the following
\begin{align} \label{61}
    &\left| {}_{3}U_{k} (t) \right|\le \left| {}_{4} A_{k} \right| +\left| {}_{5} A_{k} \right| \left(t-T_{2} \right)+\mu_{k}^2\left| {}_{4} A_{k} \right|    \left(t-T_{2} \right)^{\beta _{3} }  \Gamma\left(\gamma_{3} \right)C_{5}+ \mu_{k}^2\left| {}_{5} A_{k} \right|  \left(t-T_{2} \right)^{\beta _{3}+1 }\Gamma (\gamma_{3})C_{6}+\nonumber\\
&+\Gamma (\gamma_{3})\left(t-T_{2} \right)^{\beta _{3} } \left| f_{k } \right|C_{5}.
\end{align}
We also calculate the estimates of the coefficients ${}_{4}A_{k}$ and ${}_{5}A_{k}$. First, using estimates \eqref{45}-\eqref{48}, we determine the estimate of coefficient ${}_{4}A_{k}$ as follows:
\begin{align} \label{62}
  &\left| {}_{4}A_{k} \right|\le \left| \varphi_{k} \right|\Big(\Gamma\left(\gamma_{2} \right)\left(\xi-T_{1} \right)^{\beta _{2}}\varepsilon C_{4}+\Gamma\left(\gamma_{2} \right)\left(T_{2}-T_{1} \right)^{\beta _{2}}\varepsilon C_{4}\Big)+\mu_{k}^2\left| \varphi_{k} \right|\Big(\Gamma\left(\gamma_{1} \right)\Gamma\left(\gamma_{2} \right)T_{1}^{\beta _{1} } \times\nonumber \\
& \times \left(\xi-T_{1} \right)^{\beta _{2}}\varepsilon C_{1}C_{4}+2\Gamma\left(\gamma_{1} \right)\Gamma\left(\gamma_{2} \right)\left(\xi-T_{1} \right)^{\beta _{2}}T_{1}^{\beta _{1} } \varepsilon C_{2}C_{4}+2\Gamma\left(\gamma_{1} \right)\Gamma\left(\gamma_{2} \right)\left(T_{2}-T_{1} \right)^{\beta _{2}}T_{1}^{\beta _{1} } \varepsilon C_{2}C_{4}+\nonumber\\
& +\Gamma\left(\gamma_{1} \right)\Gamma\left(\gamma_{2} \right)\left(T_{2}-T_{1} \right)^{\beta _{2}}T_{1}^{\beta _{1} } \varepsilon C_{1}C_{4}\Big)+\mu_{k}^4\left| \varphi_{k} \right|\Big(\Gamma^{2}\left(\gamma_{1} \right)\Gamma\left(\gamma_{2} \right)T_{1}^{2\beta _{1} }\left(\xi-T_{1} \right)^{\beta _{2}}\varepsilon C_{2}^{2}C_{4}+\nonumber\\
&+ \Gamma^{2}\left(\gamma_{1} \right)\Gamma\left(\gamma_{2} \right)T_{1}^{\beta _{1} }\left(\xi-T_{1} \right)^{\beta _{2}}\varepsilon C_{1}C_{3}C_{4}+\Gamma^{2}\left(\gamma_{1} \right)\Gamma\left(\gamma_{2} \right)T_{1}^{2\beta _{1} }\left(T_{2}-T_{1} \right)^{\beta _{2}}\varepsilon C_{2}^{2}C_{4}+\Gamma^{2}\left(\gamma_{1} \right)\Gamma\left(\gamma_{2} \right)\times\nonumber \\
& \times T_{1}^{2\beta _{1} }\left(T_{2}-T_{1} \right)^{\beta _{2}}\varepsilon C_{1}C_{3}C_{4}\Big)+\left| \psi_{k} \right|\Big(T_{1}^{\beta _{1} }\Gamma\left(\gamma_{1} \right)\varepsilon C_{2}+T_{1}^{\beta _{1} }\Gamma\left(\gamma_{1} \right)\varepsilon C_{1}+T_{1}\varepsilon+\Gamma\left(\gamma_{2} \right)\left(T_{2}-T_{1} \right)^{\beta _{2}}\times\nonumber \\
&\times\varepsilon C_{4}\Big)+\mu_{k}^2\left| \psi_{k} \right|\Big(T_{1}^{\beta _{1}+1 }\Gamma\left(\gamma_{1} \right)\varepsilon C_{3}+T_{1}^{2\beta _{1} }\Gamma^{2}\left(\gamma_{1} \right)\varepsilon C_{1}C_{3}+2\Gamma\left(\gamma_{1} \right)\Gamma\left(\gamma_{2} \right)\left(T_{2}-T_{1} \right)^{\beta _{2}}\times\nonumber\\
&\times T_{1}^{\beta _{1} } \varepsilon C_{2}C_{4}+T_{1}^{2\beta _{1} }\Gamma^{2}\left(\gamma_{1} \right)\varepsilon C_{2}^{2}+\Gamma\left(\gamma_{1} \right)\Gamma\left(\gamma_{2} \right)\left(T_{2}-T_{1} \right)^{\beta _{2}}T_{1}^{\beta _{1} } \varepsilon C_{1}C_{4}\Big)+\mu_{k}^4\left| \psi_{k} \right|\times\nonumber\\
& \times \Big(\Gamma^{2}\left(\gamma_{1} \right)\Gamma\left(\gamma_{2} \right)T_{1}^{2\beta _{1} }\left(T_{2}-T_{1} \right)^{\beta _{2}}\varepsilon C_{2}^{2}C_{4}+\Gamma^{2}\left(\gamma_{1} \right)\Gamma\left(\gamma_{2} \right)T_{1}^{2\beta _{1} }\left(T_{2}-T_{1} \right)^{\beta _{2}}\varepsilon C_{1}C_{3}C_{4}\Big). 
\end{align}
Let us introduce the following notations:
\begin{align*}
    & S_{10}=\Gamma\left(\gamma_{2} \right)\left(T_{2}-T_{1} \right)^{\beta _{2}}\varepsilon C_{4},\\
& S_{11}=\Gamma\left(\gamma_{1} \right)\Gamma\left(\gamma_{2} \right)T_{1}^{\beta _{1} } \varepsilon C_{4}\left( 2C_{2}+C_{1} \right)\left( \left(\xi-T_{1} \right)^{\beta _{2}}+\left(T_{2}-T_{1} \right)^{\beta _{2}} \right),\\
& S_{12}=T_{1}^{2\beta _{1} }\Gamma^{2}\left(\gamma_{1} \right)\Gamma\left(\gamma_{2} \right)\left(T_{2}-T_{1} \right)^{\beta _{2}}\varepsilon \left( C_{2}^{2}C_{4}+C_{1}C_{3}C_{4}\right),\\
& S_{13}=T_{1}^{\beta _{1}+1 }\Gamma\left(\gamma_{1} \right)\varepsilon C_{3}+T_{1}^{2\beta _{1} }\Gamma^{2}\left(\gamma_{1} \right)\varepsilon C_{1}C_{3}+2\Gamma\left(\gamma_{1} \right)\Gamma\left(\gamma_{2} \right)T_{1}^{\beta _{1} }\left(T_{2}-T_{1} \right)^{\beta _{2}} \varepsilon C_{2} C_{4}+\\
&\Gamma\left(\gamma_{1} \right)\Gamma\left(\gamma_{2} \right) T_{1}^{\beta _{1} }\left(T_{2}-T_{1} \right)^{\beta _{2}} \varepsilon C_{1} C_{4}+ T_{1}^{2\beta _{1} }\Gamma^{2}\left(\gamma_{1} \right)\varepsilon C_{2}^{2}.
\end{align*}
We rewrite \eqref{62} according to the corresponding notations
\begin{align} \label{63}
  &\left| {}_{4}A_{k} \right|\le \left| \varphi_{k} \right|\left( \frac{S_{2}}{2}+S_{10} \right)+\mu_{k}^2\left| \varphi_{k} \right|S_{11}+\mu_{k}^4\left| \varphi_{k} \right|\left( S_{7}+S_{12} \right)+ \left| \psi_{k} \right|\left( S_{4}+T_{1}S_{1}+S_{10} \right)+\nonumber\\
&+\mu_{k}^2\left| \psi_{k} \right|S_{13}+\mu_{k}^4\left| \psi_{k} \right|S_{12}.  
\end{align}
By substituting the estimates \eqref{55} and \eqref{63} into \eqref{26}, we determine the estimate of ${}_{5}A_{k}$ as follows
\begin{align} \label{64}
 &\left| {}_{5}A_{k} \right| \le \left| \varphi_{k} \right| \varepsilon+\mu_{k}^2\left| \varphi_{k} \right|\left( \frac{S_{2}}{2}+S_{5}+S_{10} \right)+\mu_{k}^4\left| \varphi_{k} \right|\left( S_{3}+S_{6}+S_{11} \right)+\mu_{k}^6\left| \varphi_{k} \right|\left( 2S_{7}+S_{12} \right)+\nonumber\\
&+\left| \psi_{k} \right| \varepsilon+\mu_{k}^2\left| \psi_{k} \right|\left( 2S_{4}+T_{1}S_{1}+S_{10}+\varepsilon T_{1} \right)+\mu_{k}^4\left| \psi_{k} \right|S_{13}+\mu_{k}^6\left| \psi_{k} \right|S_{12}.   
\end{align}
Substituting the estimates \eqref{55},\eqref{63} and \eqref{64} into \eqref{61}, we obtain the following estimate for the solution ${}_{3}U_{k}(t)$
\begin{align*}
 &\sum_{k=1}^{\infty}\left| {}_{3}U_{k} (t) \right| \le \sum_{k=1}^{\infty}\left| \varphi_{k} \right| \Bigg(\frac{S_{2}}{2}+S_{10}+\left(t-T_{2} \right) \varepsilon+\Gamma\left(\gamma_{3} \right)\left(t-T_{2} \right)^{\beta _{3}} \varepsilon C_{5}\Bigg)+\sum_{k=1}^{\infty}\mu_{k}^2 \left| \varphi_{k} \right|\Bigg(S_{11}+\\
& +( t-T_{2})(\frac{S_{2}}{2}+S_{5}+S_{10})+\Gamma\left(\gamma_{3} \right)\left(t-T_{2} \right)^{\beta _{3}} \Big( (\frac{S_{2}}{2}+S_{10})C_{5}+\left(t-T_{2} \right) \varepsilon C_{6} +S_{5}C_{5}\Big )\Bigg)+\\
&+\sum_{k=1}^{\infty}\mu_{k}^4 \left| \varphi_{k} \right|\Bigg(S_{7}+S_{12}+( t-T_{2})(S_{3}+S_{6}+S_{11})+\Gamma\left(\gamma_{3} \right)\left(t-T_{2} \right)^{\beta _{3}}\Big(S_{11}C_{5}+\\
& +( t-T_{2})C_{6}(\frac{S_{2}}{2}+S_{10}+S_{5})+C_{5}(S_{3}+S_{6})\Big)\Bigg)+\sum_{k=1}^{\infty}\mu_{k}^6 \left| \varphi_{k} \right|\Bigg(( t-T_{2})(2S_{7}+S_{12})+\\
& +\Gamma\left(\gamma_{3} \right)\left(t-T_{2} \right)^{\beta _{3}}\Big(C_{5}(S_{7}+S_{12})+( t-T_{2})C_{6}(S_{3}+S_{6}+S_{11})+S_{7}C_{5}\Big)\Bigg)+\sum_{k=1}^{\infty}\mu_{k}^8 \left| \varphi_{k} \right|\times\\
& \times \Gamma\left(\gamma_{3} \right)\left(t-T_{2} \right)^{\beta _{3}+1}C_{6}(2S_{7}+S_{12})+\sum_{k=1}^{\infty}\left| \psi_{k} \right|\Bigg(S_{4}+T_{1}S_{1}+S_{10}+( t-T_{2})\varepsilon+\left(t-T_{2} \right)^{\beta _{3}}\times\\
& \times\Gamma\left(\gamma_{3} \right) C_{5} \varepsilon \Bigg)+\sum_{k=1}^{\infty}\mu_{k}^2\left| \psi_{k} \right|\Bigg(S_{13}+( t-T_{2})(2S_{4}+S_{1}T_{1}+S_{10}+\varepsilon T_{1})+\Gamma\left(\gamma_{3} \right)\left(t-T_{2} \right)^{\beta _{3}}\times\\
& \times\Big((S_{4}+S_{1}T_{1}+S_{10})C_{5}+( t-T_{2})\varepsilon C_{6}+C_{5}(\varepsilon T_{1}+S_{4})\Big)\Bigg)+\sum_{k=1}^{\infty}\mu_{k}^4\left| \psi_{k} \right|\Bigg(S_{12}+S_{13}\times\\
& \times( t-T_{2})+\Gamma\left(\gamma_{3} \right)\left(t-T_{2} \right)^{\beta _{3}}\Big(S_{13}C_{5}+( t-T_{2})\varepsilon C_{6}(2S_{4}+S_{1}T_{1}+S_{10}+\varepsilon T_{1})\Big)\Bigg)+\\
& +\sum_{k=1}^{\infty}\mu_{k}^6\left| \psi_{k} \right|\Bigg(( t-T_{2})S_{12}+\Gamma\left(\gamma_{3} \right)\left(t-T_{2} \right)^{\beta _{3}}\Big(C_{5}S_{12}+( t-T_{2})S_{13}C_{6}\Big)\Bigg)+\\
&+\sum_{k=1}^{\infty}\mu_{k}^8\left| \psi_{k} \right|\left(t-T_{2} \right)^{\beta _{3}+1}\Gamma\left(\gamma_{3} \right)S_{12}C_{6}.   
\end{align*}
From the estimates in Lemma 5.1 , we obtain the following estimate for ${}_{3}U_{k} (t) $
\begin{align*}
&\sum_{k=1}^{\infty}\left| {}_{3}U_{k} (t) \right| \le \sum_{k=1}^{\infty}\frac{C}{\mu_{k}^{2s-0,5}} \Bigg(\frac{S_{2}}{2}+2S_{10}+2\left(t-T_{2} \right) \varepsilon+2\Gamma\left(\gamma_{3} \right)\left(t-T_{2} \right)^{\beta _{3}} \varepsilon C_{5}+S_{4}+T_{1}S_{1})+\\
&+\sum_{k=1}^{\infty}\frac{C}{\mu_{k}^{2s-2,5}}\Bigg(S_{11}+\Gamma\left(\gamma_{3} \right)\left(t-T_{2} \right)^{\beta _{3}} \Big( (\frac{S_{2}}{2}+S_{10})C_{5}+S_{5}C_{5}+(S_{4}+S_{1}T_{1}+S_{10})C_{5}+C_{5}\times \\
&\times(\varepsilon T_{1}+S_{4})\Big)
+S_{13}+( t-T_{2})(2S_{4}+2 \varepsilon C_{6}+S_{1}T_{1}+2S_{10}+\varepsilon T_{1}+\frac{S_{2}}{2}+S_{5})\Bigg)+\sum_{k=1}^{\infty}\frac{C}{\mu_{k}^{2s-4,5}}\times\\
&\times\Bigg(S_{7}+2S_{12}+( t-T_{2})(S_{3}+S_{6}+S_{11}+S_{13})+\Gamma\left(\gamma_{3} \right)\left(t-T_{2} \right)^{\beta _{3}}\Big(S_{11}C_{5}+( t-T_{2})C_{6}(\frac{S_{2}}{2}+\\
&+S_{10}+S_{5})+C_{5}(S_{3}+S_{6})+S_{13}C_{5}+( t-T_{2})\varepsilon C_{6}(2S_{4}+S_{1}T_{1}+S_{10}+\varepsilon T_{1})\Big)\Bigg)+\sum_{k=1}^{\infty}\frac{C}{\mu_{k}^{2s-6,5}}\times\\
& \times\Bigg(2( t-T_{2})(S_{7}+S_{12})+\Gamma\left(\gamma_{3} \right)\left(t-T_{2} \right)^{\beta _{3}}\Big(C_{5}(S_{7}+S_{12})+( t-T_{2})C_{6}(S_{3}+S_{6}+S_{11})+S_{7}C_{5}+\\
&+C_{5}S_{12}+( t-T_{2})S_{13}C_{6}\Big)\Bigg)
+\sum_{k=1}^{\infty}\frac{C}{\mu_{k}^{2s-8,5}}2\Gamma\left(\gamma_{3} \right)\left(t-T_{2} \right)^{\beta _{3}+1}C_{6}(S_{7}+S_{12}).
\end{align*}

\begin{theorem}
If  $\alpha_2=1,\,\gamma_2=\beta_2$,\,$\widetilde\Delta_{k}\neq 0$, $\left\{  \varphi(r),\psi(r)\right\} \in  C^{10}[0;1]$ and the equalities 
\[
\varphi^{\left( i \right)}\left( 0 \right)=0,\psi^{\left( i \right)}\left( 0 \right)=0\, (i=\overline{1,9}),\]
\[\varphi^{\left( j \right)}\left( 1 \right)=0,\psi^{\left( j \right)}\left( 1 \right)=0\, (j=\overline{1,8})\] are satisfied. The unique solution to problem \eqref{1}-\eqref{6} exists in the form of the series \eqref{15},\eqref{16},\eqref{17}. 
\end{theorem}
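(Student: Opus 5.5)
Existence and uniqueness will be proved separately. Existence is assembled from the formal construction of Sections 3--4 together with the convergence estimates of Section 5. Uniqueness follows from the completeness of the Fourier--Bessel system $\{J_0(\mu_k r)\}$ in $L^2((0,1);r\,dr)$ and the nonvanishing of the determinants.

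\textbf{Well-posedness of the coefficient systems.} For every $k$ the coefficients ${}_1A_k=\varphi_k$, ${}_2A_k$, ${}_3A_k$, $f_k$, ${}_4A_k$, ${}_5A_k$ must be uniquely determined. The hypothesis $\widetilde\Delta_k\neq0$ makes system \eqref{36} uniquely solvable by Cramer's rule, giving \eqref{40}--\eqref{42}. Then \eqref{23} and \eqref{26} give \eqref{43}--\eqref{44}. The assumption $\alpha_2=1$, $\gamma_2=\beta_2$ guarantees ${}_1M_k\neq0$ (as cited from \cite{KKT26}), so the passage from condition \eqref{6} to \eqref{29} is legitimate. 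By Lemma 3.2, $\widetilde\Delta_k\to T_1>0$. Together with $\widetilde\Delta_k\neq 0$ for all $k$, this yields a uniform bound $1/|\widetilde\Delta_k|\le\varepsilon$. This is the constant $\varepsilon$ used in \eqref{52}--\eqref{64}.

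\textbf{Convergence.} The smoothness and boundary conditions on $\varphi,\psi$ are exactly the hypotheses of Theorem 5.1 with $2s=10$. Hence $|\varphi_k|,|\psi_k|\le C\mu_k^{-9.5}$. The worst terms in the estimates for $\sum|{}_jU_k(t)|$ carry $\mu_k^8$, namely in ${}_3U_k$ through ${}_5A_k$. Lemma 5.1 then gives majorants $C\mu_k^{-1.5}$, which are summable since $\mu_k\sim k\pi$. By the Weierstrass M-test, the series \eqref{9}--\eqref{11} converge absolutely and uniformly on $\overline{\Omega}_j$, and the series for $f(r)=\sum f_kJ_0(\mu_kr)$ does as well, since $|f_k|$ involves at most $\mu_k^6|\varphi_k|$.

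For a regular solution we also need $u_{rr}$, $\frac1r u_r$ and the Prabhakar--Caputo derivatives. For $\Delta u$, each term gains a factor $\mu_k^2$. Using the equations \eqref{12}--\eqref{14}, the fractional derivative of each term equals $f_k-\lambda_k\,{}_jU_k$, so it is controlled by the same majorants times $\mu_k^2$. For interior points, and away from $t=T_1,T_2$ where needed, the bounds \eqref{45}--\eqref{50} keep every such series dominated by $\sum\mu_k^{m}(|\varphi_k|+|\psi_k|)$ with $m\le 8$. The continuity of $u$ across $N_1,N_2$ and the transmission conditions \eqref{4}--\eqref{5} hold termwise by construction, via \eqref{18}, \eqref{19}, \eqref{22} and \eqref{26}. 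They pass to the limit by uniform convergence.

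\textbf{Uniqueness.} Suppose $\{u,f\}$ solves the homogeneous problem with $\varphi=\psi=0$. Define $U_k(t)=\int_0^1 u(t,r)J_0(\mu_kr)r\,dr$ and $f_k$ similarly. These satisfy \eqref{12}--\eqref{14} with the homogeneous versions of all matching conditions, so the unknowns solve \eqref{36} with zero right-hand side. Since $\widetilde\Delta_k\neq0$, we get ${}_2A_k={}_3A_k=f_k=0$, and then ${}_4A_k={}_5A_k=0$. Thus $U_k\equiv0$ and $f_k=0$ for all $k$, and completeness gives $u\equiv0$, $f\equiv0$.

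\textbf{Main obstacle.} The hardest step is justifying termwise application of the Prabhakar--Caputo operators and the Laplacian. This requires uniform bounds for the differentiated Mittag-Leffler terms near $t=0$, $T_1$ and $T_2$, where factors like $t^{\beta_1-1}$ appear. I would handle it by working on compact subsets of $\Omega_j$ in the interior and relying on the equation identity $D^{\beta}{}_jU_k=f_k-\lambda_k{}_jU_k$ to avoid differentiating the kernels directly.
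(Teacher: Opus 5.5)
Your existence route is the paper's: Cramer's rule for \eqref{36}, the $k$-independent bounds \eqref{45}--\eqref{50}, Lemma 5.1 with $2s=10$, and the Weierstrass test. Your completeness argument for uniqueness goes beyond the paper, which only observes that \eqref{36} is uniquely solvable. But the step all the estimates rest on, a uniform bound $1/|\widetilde\Delta_k|\le\varepsilon$, is not available. The lemma you invoke for $\widetilde\Delta_k\to T_1$ assumes $\beta_2>\gamma_2$, which contradicts the hypothesis $\gamma_2=\beta_2$. Moreover, its conclusion fails inside the theorem's hypotheses. Take $\delta=0$, $\beta_1=2$, $\alpha_2=\beta_2=\gamma_2=1$, and put $\tau=\xi-T_1>0$. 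Then $\Gamma(\gamma)E_2(\cdots|x,0)$ reduces to classical Mittag-Leffler functions, and
${}_1M_k=(1-e^{-\mu_k^2\tau})/\mu_k^2$, ${}_2M_k=(1-\cos\mu_kT_1)/\mu_k^2$, ${}_3M_k={}_4M_k=\sin(\mu_kT_1)/\mu_k$.
The constant $T_1$ in \eqref{32} is cancelled, because there $\lambda_kT_1^{\beta_1}\Gamma(\gamma_1)E_2(\cdots)=-1+\sin(\mu_kT_1)/(\mu_kT_1)$. Substituting into \eqref{37} gives $\widetilde\Delta_k=-\sin(\mu_kT_1)/\mu_k+\cos(\mu_kT_1)/\mu_k^2+O(e^{-\mu_k^2\tau})\to0$. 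With the same other parameters and $1<\beta_1<2$, one similarly finds $\widetilde\Delta_k\sim-T_1^{1-\beta_1}/(\Gamma(2-\beta_1)\mu_k^2)$. Hence $1/|\widetilde\Delta_k|$ grows at least like $\mu_k$. For $\beta_1=2$ it is a genuine small denominator, large whenever $\mu_kT_1$ comes close to $\pi\mathbb{Z}$. Every coefficient bound \eqref{53}--\eqref{64} picks up this factor, so the Weierstrass majorant you use even for the series of $u$ is at best $C\mu_k^{-1/2}$, which is not summable. Knowing $\widetilde\Delta_k\ne0$ for each $k$ gives no uniform constant. You would need a quantitative hypothesis $|\widetilde\Delta_k|\ge c\,\mu_k^{-p}$ (a Diophantine-type condition on $T_1$ when $\beta_1=2$), paid for with $p$ extra powers of $\mu_k$ and correspondingly more smoothness of $\varphi,\psi$.

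Three further steps fail even if such a bound is granted.

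\textbf{Regularity in $\Omega_3$.} By \eqref{64}, $|{}_5A_k|$ contains $\mu_k^6(|\varphi_k|+|\psi_k|)$, so \eqref{61} puts $\mu_k^8$ into $|{}_3U_k|$. Since $\Delta J_0(\mu_kr)=-\mu_k^2J_0(\mu_kr)$ and ${}^{PC}D^{\alpha_3,\beta_3,\gamma_3,\delta}_{T_2t}\,{}_3U_k=f_k\pm\lambda_k\,{}_3U_k$, the only majorant for $\Delta u$, $u_{rr}$ and the fractional derivative in $\Omega_3$ is $\sum\mu_k^{10}(|\varphi_k|+|\psi_k|)$. Theorem 5.1 bounds its terms only by $C\mu_k^{1/2}$. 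So your claim $m\le 8$ holds in $\Omega_1,\Omega_2$ but fails in $\Omega_3$. Restricting to compact subsets does not help, since \eqref{49}--\eqref{50} carry no decay in $k$. You would need estimates such as $|E_2(\cdots|-\mu_k^2s^{\beta_3},\delta s^{\alpha_3})|\le C/(1+\mu_k^2s^{\beta_3})$, or $C^{12}$ data. The paper itself never estimates these series; it stops at uniform convergence of the series for $u$.

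\textbf{Uniqueness.} The formulas \eqref{15}--\eqref{17} solve ${}^{PC}D\,U=f_k+\lambda_kU$; for $\delta=0$, ${}_1U_k=\varphi_kE^{1}_{\beta_1,1}(\lambda_kt^{\beta_1})+\cdots$, which is correct for $\lambda_k=-\mu_k^2$. By contrast, \eqref{12}--\eqref{14}, and hence \eqref{19}, \eqref{24} and \eqref{26}, use $f_k-\lambda_kU$. The Fourier--Bessel coefficients of an actual solution therefore satisfy \eqref{36} with the middle entry $\lambda_k$ of the third row replaced by $-\lambda_k$. In the example above, that determinant is $\sin(\mu_kT_1)/\mu_k+\cos(\mu_kT_1)/\mu_k^2+O(e^{-\mu_k^2\tau})$, not $\widetilde\Delta_k$. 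So the hypothesis $\widetilde\Delta_k\ne0$ does not give uniqueness.

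\textbf{Hypotheses of Theorem 5.1.} With $2s=10$, Theorem 5.1 also requires $\varphi(0)=\varphi(1)=\psi(0)=\psi(1)=0$, which the statement omits (its indices start at $1$). The conditions at $r=1$ are indispensable and are forced by $u(t,1)=0$. Without them, $|\varphi_k|$ decays only like $\mu_k^{-1/2}$.
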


\section{Conclusion}
This article investigates the inverse source problem for a mixed wave-diffusion-wave equation with a the Prabhakar-Caputo fractional-order derivative in a cylindrical domain. The method of separation of variables is applied. The solution is sought in the form of a Fourier-Bessel series, and its uniform convergence is proven.
\section{Appendix}
\subsection{A1: Simplification of $\widetilde\Delta$.}
By substituting the notations \eqref{28},\eqref{31},\eqref{32} and \eqref{34} into \eqref{37}, we find the value of $\widetilde{\Delta}_{k}$ as follows:
\begin{align*}
&\widetilde{\Delta}_{k}=\Gamma\left(\gamma_{2} \right)\left(\xi-T_{1} \right)^{\beta _{2}} E_{2} \left(\left. \begin{array}{l} {\gamma_{2} ,\gamma_{2} ,1;1,0} \\ {\beta _{2} +1,\beta _{2} ,\alpha_{2} ;\gamma_{2} ,\gamma_{2} ;1,1} \end{array}\right|\begin{array}{c} {\lambda_{k} \left(\xi-T_{1} \right)^{\beta _{2} } } \\ {\delta \left(\xi-T_{1} \right)^{\alpha_{2} } } \end{array}\right)+T_{1}+T_{1}^{\beta_{1}} \Gamma(\gamma_{1})\times\\
&\times E_{2} \left(\left. \begin{array}{l} {\gamma_{1} ,\gamma_{1} ,1;1,0} \\ {\beta _{1} +1,\beta _{1} ,\alpha _{1};\gamma_{1} ,\gamma_{1} ;1,1} \end{array}\right|\begin{array}{c} {\lambda_k T_{1}^{\beta _{1} } } \\ {\delta T_{1}^{\alpha_{1} } } \end{array}\right)+T_{1}^{\beta_{1}+1} \lambda_{k} \Gamma(\gamma_{1})E_{2} \left(\left. \begin{array}{l} {\gamma_{1} ,\gamma_{1} ,1;1,0} \\ {\beta _{1} +2,\beta _{1} ,\alpha_{1} ;\gamma_{1} ,\gamma_{1} ;1,1} \end{array}\right|\begin{array}{c} {\lambda_k T_{1}^{\beta _{1} } } \\ {\delta T_{1}^{\alpha_{1} } } \end{array}\right)+ \\
&+2 \lambda_{k}T_{1}^{\beta_{1}}\Gamma\left(\gamma_{1} \right)\Gamma\left(\gamma_{2} \right)\left(\xi-T_{1} \right)^{\beta _{2}} E_{2} \left(\left. \begin{array}{l} {\gamma_{2} ,\gamma_{2} ,1;1,0} \\ {\beta _{2} +1,\beta _{2} ,\alpha_{2} ;\gamma_{2} ,\gamma_{2} ;1,1} \end{array}\right|\begin{array}{c} {\lambda_{k} \left(\xi-T_{1} \right)^{\beta _{2} } } \\ {\delta \left(\xi-T_{1} \right)^{\alpha_{2} } } \end{array}\right)\times \\
&\times E_{2} \left(\left. \begin{array}{l} {\gamma_{1} ,\gamma_{1} ,1;1,0} \\ {\beta _{1} +1,\beta _{1} ,\alpha_{1} ;\gamma_{1} ,\gamma_{1} ;1,1} \end{array}\right|\begin{array}{c} {\lambda_k T_{1}^{\beta _{1} } } \\ {\delta T_{1}^{\alpha_{1} } } \end{array}\right)+
\Gamma^{2}\left(\gamma_{1} \right)\left( E_{2} \left(\left. \begin{array}{l} {\gamma_{1} ,\gamma_{1} ,1;1,0} \\ {\beta _{1} +1,\beta _{1} ,\alpha_{1} ;\gamma_{1} ,\gamma_{1} ;1,1} \end{array}\right|\begin{array}{c} {\lambda_k T_{1}^{\beta _{1} } } \\ {\delta T_{1}^{\alpha_{1} } } \end{array}\right) \right)^{2}\times\\
& \times \lambda_{k}T_{1}^{2\beta_{1}}+\lambda_{k}^{2} \cdot\Gamma\left(\gamma_{2} \right)\left(\xi-T_{1} \right)^{\beta _{2}} E_{2} \left(\left. \begin{array}{l} {\gamma_{2} ,\gamma_{2} ,1;1,0} \\ {\beta _{2} +1,\beta _{2} ,\alpha_{2} ;\gamma_{2} ,\gamma_{2} ;1,1} \end{array}\right|\begin{array}{c} {\lambda_{k} \left(\xi-T_{1} \right)^{\beta _{2} } } \\ {\delta \left(\xi-T_{1} \right)^{\alpha_{2} } } \end{array}\right)\cdot T_{1}^{2\beta_{1}}\Gamma^{2}\left(\gamma_{1} \right) \times \\
&\times\left( E_{2} \left(\left. \begin{array}{l} {\gamma_{1} ,\gamma_{1} ,1;1,0} \\ {\beta _{1} +1,\beta _{1} ,\alpha_{1} ;\gamma_{1} ,\gamma_{1} ;1,1} \end{array}\right|\begin{array}{c} {\lambda_k T_{1}^{\beta _{1} } } \\ {\delta T_{1}^{\alpha_{1} } } \end{array}\right) \right)^{2}+2\lambda_{k}\Gamma\left(\gamma_{2} \right)\left(\xi-T_{1} \right)^{\beta _{2}} \times  \\ 
& \times E_{2} \left(\left. \begin{array}{l} {\gamma_{2} ,\gamma_{2} ,1;1,0} \\ {\beta _{2} +1,\beta _{2} ,\alpha_{2} ;\gamma_{2} ,\gamma_{2} ;1,1} \end{array}\right|\begin{array}{c} {\lambda_{k} \left(\xi-T_{1} \right)^{\beta _{2} } } \\ {\delta \left(\xi-T_{1} \right)^{\alpha_{2} } } \end{array}\right)\Bigg(T_{1}+T_{1}^{\beta_{1}+1}\lambda_{k}\Gamma\left(\gamma_{1} \right)\times\\
&\times
E_{2} \left(\left. \begin{array}{l} {\gamma_{1} ,\gamma_{1} ,1;1,0} \\ {\beta _{1} +2,\beta _{1} ,\alpha_{1} ;\gamma_{1} ,\gamma_{1} ;1,1} \end{array}\right|\begin{array}{c} {\lambda_k T_{1}^{\beta _{1} } } \\ {\delta T_{1}^{\alpha_{1} } } \end{array}\right)\Bigg)-\Bigg(T_{1}+T_{1}^{\beta_{1}+1}\lambda_{k}\Gamma\left(\gamma_{1} \right)\times \\
& \times 
E_{2} \left(\left. \begin{array}{l} {\gamma_{1} ,\gamma_{1} ,1;1,0} \\ {\beta _{1} +2,\beta _{1} ,\alpha_{1} ;\gamma_{1} ,\gamma_{1} ;1,1} \end{array}\right|\begin{array}{c} {\lambda_k T_{1}^{\beta _{1} } } \\ {\delta T_{1}^{\alpha_{1} } } \end{array}\right)\Bigg)T_{1}^{\beta_{1}-1}\Gamma\left(\gamma_{1} \right)E_{2} \left(\left. \begin{array}{l} {\gamma_{1} ,\gamma_{1} ,1;1,0} \\ {\beta _{1},\beta _{1} ,\alpha_{1} ;\gamma_{1} ,\gamma_{1} ;1,1} \end{array}\right|\begin{array}{c} {\lambda_k T_{1}^{\beta _{1} } } \\ {\delta T_{1}^{\alpha_{1} } } \end{array}\right)-\\
&-\lambda_{k}\Gamma\left(\gamma_{2} \right)\left(\xi-T_{1} \right)^{\beta _{2}} E_{2} \left(\left. \begin{array}{l} {\gamma_{2} ,\gamma_{2} ,1;1,0} \\ {\beta _{2} +1,\beta _{2} ,\alpha_{2} ;\gamma_{2} ,\gamma_{2} ;1,1} \end{array}\right|\begin{array}{c} {\lambda_{k} \left(\xi-T_{1} \right)^{\beta _{2} } } \\ {\delta \left(\xi-T_{1} \right)^{\alpha_{2} } } \end{array}\right)\Bigg(T_{1}+T_{1}^{\beta_{1}+1} \lambda_{k}\Gamma(\gamma_{1})\times\\
&\times E_{2} \left(\left. \begin{array}{l} {\gamma_{1} ,\gamma_{1} ,1;1,0} \\ {\beta _{1} +2,\beta _{1} ,\alpha_{1} ;\gamma_{1} ,\gamma_{1} ;1,1} \end{array}\right|\begin{array}{c} {\lambda_k T_{1}^{\beta _{1} } } \\ {\delta T_{1}^{\alpha_{1} } } \end{array}\right)\Bigg)T_{1}^{\beta_{1}-1}  \Gamma(\gamma_{1})E_{2} \left(\left. \begin{array}{l} {\gamma_{1} ,\gamma_{1} ,1;1,0} \\ {\beta _{1} ,\beta _{1} ,\alpha_{1} ;\gamma_{1} ,\gamma_{1} ;1,1} \end{array}\right|\begin{array}{c} {\lambda_k T_{1}^{\beta _{1} } } \\ {\delta T_{1}^{\alpha_{1} } } \end{array}\right).
\end{align*}
As a result of simplifying similar terms, the value of $\widetilde{\Delta}_{k}$ can be written as \eqref{39}.

\subsection{A2: Simplification of ${}_4A_k$.}
We substitute \eqref{41} and \eqref{42} into equation \eqref{23}:
\begin{align*}
 & {}_{4}A_{k}=\frac{1}{\widetilde{\Delta}_{k}}\Bigg[\psi_{k}T_{1}^{\beta_{1}} \Gamma(\gamma_{1}) E_{2} \left(\left. \begin{array}{l} {\gamma_{1} ,\gamma_{1} ,1;1,0} \\ {\beta _{1} +1,\beta _{1} ,\alpha_{1} ;\gamma_{1} ,\gamma_{1} ;1,1} \end{array}\right|\begin{array}{c} {\lambda_k T_{1}^{\beta _{1} } } \\ {\delta T_{1}^{\alpha_{1} } } \end{array}\right)+ \lambda_{k} \psi_{k}T_{1}^{2\beta_{1}} \Gamma^{2}(\gamma_{1}) \times\\
&\times\left( E_{2} \left(\left. \begin{array}{l} {\gamma_{1} ,\gamma_{1} ,1;1,0} \\ {\beta _{1} +1,\beta _{1} ,\alpha_{1} ;\gamma_{1} ,\gamma_{1} ;1,1} \end{array}\right|\begin{array}{c} {\lambda_k T_{1}^{\beta _{1} } } \\ {\delta T_{1}^{\alpha_{1} } } \end{array}\right) \right)^{2}-\lambda_{k}\varphi_{k}\Gamma\left(\gamma_{1} \right)\Gamma\left(\gamma_{2} \right)T_{1}^{\beta_{1}}\left(\xi-T_{1} \right)^{\beta _{2}} \times \\
&\times E_{2} \left(\left. \begin{array}{l} {\gamma_{1} ,\gamma_{1} ,1;1,0} \\ {\beta _{1},\beta _{1} ,\alpha_{1} ;\gamma_{1} ,\gamma_{1} ;1,1} \end{array}\right|\begin{array}{c} {\lambda_k T_{1}^{\beta _{1} } } \\ {\delta T_{1}^{\alpha_{1}} } \end{array}\right)\times\\
&\times E_{2} \left(\left. \begin{array}{l} {\gamma_{2} ,\gamma_{2} ,1;1,0} \\ {\beta _{2} +1,\beta _{2} ,\alpha_{2} ;\gamma_{2} ,\gamma_{2} ;1,1} \end{array}\right|\begin{array}{c} {\lambda_{k} \left(\xi-T_{1} \right)^{\beta _{2} } } \\ {\delta \left(\xi-T_{1} \right)^{\alpha_{2} } } \end{array}\right)-\lambda_{k}^{2}T_{1}^{2\beta_{1}}\Gamma\left(\gamma_{2} \right)\varphi_{k} \Gamma^{2}\left(\gamma_{1} \right)\left(\xi-T_{1} \right)^{\beta _{2}}\times\\
& \times E_{2} \left(\left. \begin{array}{l} {\gamma_{1} ,\gamma_{1} ,1;1,0} \\ {\beta _{1},\beta _{1} ,\alpha_{1} ;\gamma_{1} ,\gamma_{1} ;1,1} \end{array}\right|\begin{array}{c} {\lambda_k T_{1}^{\beta _{1} } } \\ {\delta T_{1}^{\alpha_{1} } } \end{array}\right)E_{2} \left(\left. \begin{array}{l} {\gamma_{1} ,\gamma_{1} ,1;1,0} \\ {\beta _{1}+2,\beta _{1} ,\alpha_{1} ;\gamma_{1} ,\gamma_{1} ;1,1} \end{array}\right|\begin{array}{c} {\lambda_k T_{1}^{\beta _{1} } } \\ {\delta T_{1}^{\alpha_{1} } } \end{array}\right)\times\\
& \times E_{2} \left(\left. \begin{array}{l} {\gamma_{2} ,\gamma_{2} ,1;1,0} \\ {\beta _{2} +1,\beta _{2} ,\alpha_{2} ;\gamma_{2} ,\gamma_{2} ;1,1} \end{array}\right|\begin{array}{c} {\lambda_{k} \left(\xi-T_{1} \right)^{\beta _{2} } } \\ {\delta \left(\xi-T_{1} \right)^{\alpha_{2} } } \end{array}\right)+\varphi_{k}\Gamma\left(\gamma_{2} \right)\left(\xi-T_{1} \right)^{\beta _{2}}\times \\
&\times E_{2} \left(\left. \begin{array}{l} {\gamma_{2} ,\gamma_{2} ,1;1,0} \\ {\beta _{2} +1,\beta _{2} ,\alpha_{2} ;\gamma_{2} ,\gamma_{2} ;1,1} \end{array}\right|\begin{array}{c} {\lambda_{k} \left(\xi-T_{1} \right)^{\beta _{2} } } \\ {\delta \left(\xi-T_{1} \right)^{\alpha_{2} } } \end{array}\right)+2\varphi_{k}\lambda_{k}\Gamma\left(\gamma_{1} \right)\Gamma\left(\gamma_{2} \right)T_{1}^{\beta_{1}}\left(\xi-T_{1} \right)^{\beta _{2}}\times \\
& \times E_{2} \left(\left. \begin{array}{l} {\gamma_{2} ,\gamma_{2} ,1;1,0} \\ {\beta _{2} +1,\beta _{2} ,\alpha_{2} ;\gamma_{2} ,\gamma_{2} ;1,1} \end{array}\right|\begin{array}{c} {\lambda_{k} \left(\xi-T_{1} \right)^{\beta _{2} } } \\ {\delta \left(\xi-T_{1} \right)^{\alpha_{2} } } \end{array}\right)E_{2} \left(\left. \begin{array}{l} {\gamma_{1} ,\gamma_{1} ,1;1,0} \\ {\beta _{1} +1,\beta _{1} ,\alpha_{1} ;\gamma_{1} ,\gamma_{1} ;1,1} \end{array}\right|\begin{array}{c} {\lambda_k T_{1}^{\beta _{1} } } \\ {\delta T_{1}^{\alpha_{1} } } \end{array}\right)+\\
&+\varphi_{k}\lambda_{k}^{2}\Gamma^{2}\left(\gamma_{1} \right)\Gamma\left(\gamma_{2} \right)T_{1}^{2\beta_{1}}\left(\xi-T_{1} \right)^{\beta _{2}}E_{2} \left(\left. \begin{array}{l} {\gamma_{2} ,\gamma_{2} ,1;1,0} \\ {\beta _{2} +1,\beta _{2} ,\alpha_{2} ;\gamma_{2} ,\gamma_{2} ;1,1} \end{array}\right|\begin{array}{c} {\lambda_{k} \left(\xi-T_{1} \right)^{\beta _{2} } } \\ {\delta \left(\xi-T_{1} \right)^{\alpha_{2} } } \end{array}\right) \times\\
& \times\left( E_{2} \left(\left. \begin{array}{l} {\gamma_{1} ,\gamma_{1} ,1;1,0} \\ {\beta _{1} +1,\beta _{1} ,\alpha_{1} ;\gamma_{1} ,\gamma_{1} ;1,1} \end{array}\right|\begin{array}{c} {\lambda_k T_{1}^{\beta _{1} } } \\ {\delta T_{1}^{\alpha_{1} } } \end{array}\right) \right)^{2}
- \psi_{k}T_{1}^{\beta_{1}} \Gamma(\gamma_{1}) E_{2} \left(\left. \begin{array}{l} {\gamma_{1} ,\gamma_{1} ,1;1,0} \\ {\beta _{1} ,\beta _{1} ,\alpha_{1} ;\gamma_{1} ,\gamma_{1} ;1,1} \end{array}\right|\begin{array}{c} {\lambda_k T_{1}^{\beta _{1} } } \\ {\delta T_{1}^{\alpha_{1} } } \end{array}\right)-\\
& -\psi_{k} T_{1}^{2\beta_{1}}\Gamma^{2}(\gamma_{1})\lambda_{k}E_{2} \left(\left. \begin{array}{l} {\gamma_{1} ,\gamma_{1} ,1;1,0} \\ {\beta _{1},\beta _{1} ,\alpha_{1} ;\gamma_{1} ,\gamma_{1} ;1,1} \end{array}\right|\begin{array}{c} {\lambda_k T_{1}^{\beta _{1} } } \\ {\delta T_{1}^{\alpha_{1} } } \end{array}\right) E_{2} \left(\left. \begin{array}{l} {\gamma_{1} ,\gamma_{1} ,1;1,0} \\ {\beta _{1}+2,\beta _{1} ,\alpha_{1} ;\gamma_{1} ,\gamma_{1} ;1,1} \end{array}\right|\begin{array}{c} {\lambda_k T_{1}^{\beta _{1} } } \\ {\delta T_{1}^{\alpha_{1} } } \end{array}\right)+ \\
&+\psi_{k}T_{1}+\psi_{k} \lambda_{k}T_{1}^{\beta_{1}+1}\Gamma(\gamma_{1})E_{2} \left(\left. \begin{array}{l} {\gamma_{1} ,\gamma_{1} ,1;1,0} \\ {\beta _{1}+2,\beta _{1} ,\alpha_{1} ;\gamma_{1} ,\gamma_{1} ;1,1} \end{array}\right|\begin{array}{c} {\lambda_k T_{1}^{\beta _{1} } } \\ {\delta T_{1}^{\alpha_{1} } } \end{array}\right)\Bigg]+\frac{1}{\widetilde{\Delta}_{k}}\Bigg[\psi_{k}-\varphi_{k}-\\
&-2\lambda_{k} \varphi_{k}T_{1}^{\beta_{1}} \Gamma(\gamma_{1})E_{2} \left(\left. \begin{array}{l} {\gamma_{1} ,\gamma_{1} ,1;1,0} \\ {\beta _{1} +1,\beta _{1} ,\alpha_{1} ;\gamma_{1} ,\gamma_{1} ;1,1} \end{array}\right|\begin{array}{c} {\lambda_k T_{1}^{\beta _{1} } } \\ {\delta T_{1}^{\alpha_{1} } } \end{array}\right)-\lambda_{k}^{2} \varphi_{k} T_{1}^{2\beta_{1}}\Gamma^{2}(\gamma_{1}) \times \nonumber\\
&\times \left( E_{2} \left(\left. \begin{array}{l} {\gamma_{1} ,\gamma_{1} ,1;1,0} \\ {\beta _{1} +1,\beta _{1} ,\alpha_{1} ;\gamma_{1} ,\gamma_{1} ;1,1} \end{array}\right|\begin{array}{c} {\lambda_k T_{1}^{\beta _{1} } } \\ {\delta T_{1}^{\alpha_{1} } } \end{array}\right) \right)^{2}-\lambda_{k}\varphi_{k}\Gamma\left(\gamma_{2} \right)\left(\xi-T_{1} \right)^{\beta _{2}} \times \nonumber\\
&\times E_{2} \left(\left. \begin{array}{l} {\gamma_{2} ,\gamma_{2} ,1;1,0} \\ {\beta _{2} +1,\beta _{2} ,\alpha_{2} ;\gamma_{2} ,\gamma_{2} ;1,1} \end{array}\right|\begin{array}{c} {\lambda_{k} \left(\xi-T_{1} \right)^{\beta _{2} } } \\ {\delta \left(\xi-T_{1} \right)^{\alpha_{2} } }\end{array}\right)-2\lambda_{k}^{2}\varphi_{k}T_{1}^{\beta_{1}}\Gamma\left(\gamma_{1} \right)\Gamma\left(\gamma_{2} \right)\left(\xi-T_{1} \right)^{\beta _{2}}\times\nonumber\\
&\times E_{2} \left(\left. \begin{array}{l} {\gamma_{2} ,\gamma_{2} ,1;1,0} \\ {\beta _{2} +1,\beta _{2} ,\alpha_{2} ;\gamma_{2} ,\gamma_{2} ;1,1} \end{array}\right|\begin{array}{c} {\lambda_{k} \left(\xi-T_{1} \right)^{\beta _{2} } } \\ {\delta \left(\xi-T_{1} \right)^{\alpha_{2}} } \end{array}\right)E_{2} \left(\left. \begin{array}{l} {\gamma_{1} ,\gamma_{1} ,1;1,0} \\ {\beta _{1} +1,\beta _{1} ,\alpha_{1} ;\gamma_{1} ,\gamma_{1} ;1,1} \end{array}\right|\begin{array}{c} {\lambda_k T_{1}^{\beta _{1} } } \\ {\delta T_{1}^{\alpha_{1} } } \end{array}\right)-\\
& -\lambda_{k}^{3}T_{1}^{2\beta_{1}}\varphi_{k}\Gamma^{2}\left(\gamma_{1} \right)\Gamma\left(\gamma_{2} \right)\left(\xi-T_{1} \right)^{\beta _{2}}E_{2} \left(\left. \begin{array}{l} {\gamma_{2} ,\gamma_{2} ,1;1,0} \\ {\beta _{2} +1,\beta _{2} ,\alpha_{2} ;\gamma_{2} ,\gamma_{2} ;1,1} \end{array}\right|\begin{array}{c} {\lambda_{k} \left(\xi-T_{1} \right)^{\beta _{2} } } \\ {\delta \left(\xi-T_{1} \right)^{\alpha_{2} } } \end{array}\right)\times\nonumber\\
& \times \left( E_{2} \left(\left. \begin{array}{l} {\gamma_{1} ,\gamma_{1} ,1;1,0} \\ {\beta _{1} +1,\beta _{1} ,\alpha_{1} ;\gamma_{1} ,\gamma_{1} ;1,1} \end{array}\right|\begin{array}{c} {\lambda_k T_{1}^{\beta _{1} } } \\ {\delta T_{1}^{\alpha_{1} } } \end{array}\right) \right)^{2}+\psi_{k}\lambda_k T_{1}^{\beta _{1} }+\psi_{k}\lambda_k^{2}T_{1}^{\beta _{1}+1 }\Gamma\left(\gamma_{1} \right) \times\nonumber\\
& \times E_{2} \left(\left. \begin{array}{l} {\gamma_{1} ,\gamma_{1} ,1;1,0} \\ {\beta _{1} +2,\beta _{1} ,\alpha_{1} ;\gamma_{1} ,\gamma_{1} ;1,1} \end{array}\right|\begin{array}{c} {\lambda_k T_{1}^{\beta _{1} } } \\ {\delta T_{1}^{\alpha_{1} } } \end{array}\right)+\psi_{k}\lambda_kT_{1}^{\beta _{1} }\Gamma\left(\gamma_{1} \right) \times \nonumber\\
&\times E_{2} \left(\left. \begin{array}{l} {\gamma_{1} ,\gamma_{1} ,1;1,0} \\ {\beta _{1} +1,\beta _{1} ,\alpha_{1} ;\gamma_{1} ,\gamma_{1} ;1,1} \end{array}\right|\begin{array}{c} {\lambda_k T_{1}^{\beta _{1} } } \\ {\delta T_{1}^{\alpha_{1} } } \end{array}\right)+\lambda_{k}\varphi_{k}\Gamma(\gamma_{1})T_{1}^{\beta_{1}}E_{2} \left(\left. \begin{array}{l} {\gamma_{1} ,\gamma_{1} ,1;1,0} \\ {\beta _{1},\beta _{1} ,\alpha_{1} ;\gamma_{1} ,\gamma_{1} ;1,1} \end{array}\right|\begin{array}{c} {\lambda_k T_{1}^{\beta _{1} } } \\ {\delta T_{1}^{\alpha_{1} } } \end{array}\right)+\nonumber\\
&+\lambda_{k}^{2} \varphi_{k}\Gamma^{2}(\gamma_{1})T_{1}^{2\beta_{1}} E_{2} \left(\left. \begin{array}{l} {\gamma_{1} ,\gamma_{1} ,1;1,0} \\ {\beta _{1},\beta _{1} ,\alpha_{1} ;\gamma_{1} ,\gamma_{1} ;1,1} \end{array}\right|\begin{array}{c} {\lambda_k T_{1}^{\beta _{1} } } \\ {\delta T_{1}^{\alpha_{1} } } \end{array}\right)E_{2} \left(\left. \begin{array}{l} {\gamma_{1} ,\gamma_{1} ,1;1,0} \\ {\beta _{1}+2,\beta _{1} ,\alpha_{1} ;\gamma_{1} ,\gamma_{1} ;1,1} \end{array}\right|\begin{array}{c} {\lambda_k T_{1}^{\beta _{1} } } \\ {\delta T_{1}^{\alpha_{1} } } \end{array}\right)+\nonumber\\
&+\lambda_{k}^{2}\varphi_{k}\Gamma(\gamma_{1})\Gamma\left(\gamma_{2} \right)\left(\xi-T_{1} \right)^{\beta _{2}}T_{1}^{\beta_{1}}E_{2} \left(\left. \begin{array}{l} {\gamma_{2} ,\gamma_{2} ,1;1,0} \\ {\beta _{2} +1,\beta _{2} ,\alpha_{2} ;\gamma_{2} ,\gamma_{2} ;1,1} \end{array}\right|\begin{array}{c} {\lambda_{k} \left(\xi-T_{1} \right)^{\beta _{2} } } \\ {\delta \left(\xi-T_{1} \right)^{\alpha_{2} } } \end{array}\right)\times\nonumber\\
& \times E_{2} \left(\left. \begin{array}{l} {\gamma_{1} ,\gamma_{1} ,1;1,0} \\ {\beta _{1},\beta _{1} ,\alpha_{1} ;\gamma_{1} ,\gamma_{1} ;1,1} \end{array}\right|\begin{array}{c} {\lambda_k T_{1}^{\beta _{1} } } \\ {\delta T_{1}^{\alpha_{1} } } \end{array}\right)+T_{1}^{2\beta_{1}}E_{2} \left(\left. \begin{array}{l} {\gamma_{2} ,\gamma_{2} ,1;1,0} \\ {\beta _{2} +1,\beta _{2} ,\alpha_{2} ;\gamma_{2} ,\gamma_{2} ;1,1} \end{array}\right|\begin{array}{c} {\lambda_{k} \left(\xi-T_{1} \right)^{\beta _{2} } } \\ {\delta \left(\xi-T_{1} \right)^{\alpha_{2} } } \end{array}\right)\times\nonumber\\
&\times\lambda_{k}^{3}\varphi_{k}\Gamma^{2}(\gamma_{1})\Gamma(\gamma_{2})E_{2} \left(\left. \begin{array}{l} {\gamma_{1} ,\gamma_{1} ,1;1,0} \\ {\beta _{1},\beta _{1} ,\alpha_{1} ;\gamma_{1} ,\gamma_{1} ;1,1} \end{array}\right|\begin{array}{c} {\lambda_k T_{1}^{\beta _{1} } } \\ {\delta T_{1}^{\alpha_{1} } } \end{array}\right)E_{2} \left(\left. \begin{array}{l} {\gamma_{1} ,\gamma_{1} ,1;1,0} \\ {\beta _{1}+2,\beta _{1} ,\alpha_{1} ;\gamma_{1} ,\gamma_{1} ;1,1} \end{array}\right|\begin{array}{c} {\lambda_k T_{1}^{\beta _{1} } } \\ {\delta T_{1}^{\alpha_{1} } } \end{array}\right)\Bigg]. \nonumber
\end{align*}
and after simplifying similar terms, the coefficient ${}_{4}A_{k}$ can be written as \eqref{43}.
\bibliographystyle{amsplain}

\end{document}